\documentclass{article}

\usepackage{amsmath,amsfonts,amssymb,amsthm}
\usepackage{graphicx,subfig}
\usepackage{tabularx}
\usepackage[utf8]{inputenc}
\usepackage{authblk}
\usepackage{xcolor}
\usepackage{verbatim}
\usepackage[title]{appendix}
\usepackage{paralist}
\usepackage{url}
\usepackage{mathtools}

\usepackage{graphicx}

\usepackage{xcolor}

\usepackage{lipsum}
\usepackage{amsfonts}
\usepackage{graphicx}
\usepackage{epstopdf}
\usepackage{algorithmic}
\usepackage{algorithm}

\usepackage{booktabs}
\usepackage{multirow}

\newcommand{\wal}[1]{\operatorname{wal}_{#1}}
\newcommand{\el}{\bm{\ell}}

\usepackage{bm}
\usepackage{amssymb}

\usepackage{hyperref}
\usepackage{comment}

\renewcommand{\le}{\leqslant}
\renewcommand{\ge}{\geqslant}
\renewcommand{\leq}{\leqslant}
\renewcommand{\geq}{\geqslant}
\renewcommand{\emptyset}{\varnothing}

\newcommand{\natu}{\mathbb{N}}

\newcommand{\bsk}{\boldsymbol{k}}

\newcommand{\bszero}{\boldsymbol{0}}
\newcommand{\bsone}{\boldsymbol{1}}

\newcommand{\bsalpha}{\boldsymbol{\alpha}}

\newcommand{\bsgamma}{\boldsymbol{\gamma}}

\newcommand{\med}{\mathrm{med}}

\newcommand{\trE}{\operatorname{tr}_E}

\newtheorem{theorem}{Theorem}[section]
\newtheorem{lemma}[theorem]{Lemma}
\newtheorem{proposition}[theorem]{Proposition}
\newtheorem{corollary}[theorem]{Corollary}
\newtheorem{Algorithm}[theorem]{Algorithm}

\theoremstyle{definition}

\theoremstyle{remark}
\newtheorem{remark}[theorem]{Remark}

\numberwithin{equation}{section}

\author[1]{Ziyang Ye}
\author[1]{Xiaoqun Wang}
\author[2]{Zexin Pan\thanks{Corresponding author}}

\date{}

\affil[1]{Department of Mathematical Sciences,
Tsinghua University,
Beijing 100084,
People's Republic of China}

\affil[2]{Institute of Fundamental and Transdisciplinary Research,
Zhejiang University,
866 Yuhangtang Road,
Xihu District,
Hangzhou,
Zhejiang Province 310058,
China}

\makeatletter
\let\AB@affilsepx\AB@affilsep

\makeatother

\title{Universal $L^2$-approximation using median digital-net algorithms}
\begin{document}

\maketitle

\begingroup
\renewcommand\thefootnote{}
\footnotetext{
Email addresses:
\url{yzy23@mails.tsinghua.edu.cn},
\url{wangxiaoqun@mail.tsinghua.edu.cn},
\url{zep002@zju.edu.cn}.

The work of the first and second authors was funded by the National Science Foundation of China
(Grant 72571153).
}
\endgroup

\begin{abstract}
We propose a median digital-net algorithm for $L^2$-approximation of non-periodic functions over $[0,1]^s$, inspired by the recently developed median lattice algorithms for the periodic setting. The algorithm requires no smoothness or weight parameters but only a sufficiently large candidate Walsh index set $K$. It proceeds in three stages: generating multiple estimates of the Walsh coefficients in $K$ using independent randomized digital-net samples; taking the respective median of both the estimates and their absolute values; then, based on these median values, identifying the dominant coefficients and  constructing a truncated Walsh series as the final approximation. We prove that if the target function has dominating mixed partial derivatives up to order $\alpha$, all having finite Vitali variation of fractional order $\lambda$, then the algorithm achieves an $L^2$-error of $\mathcal{O}(M^{-\alpha-\lambda+\eta})$ with high probability, where $M$ is the total number of function evaluations and $\eta>0$ is arbitrarily small. Furthermore, the implied constant grows at most polynomially in the dimension $s$ under suitable decay conditions on the ANOVA components of the target function. On the implementation side, we provide both parameter-dependent and -independent constructions of the index set $K$, and employ the fast Walsh--Hadamard transform and Gray code ordering to accelerate the algorithm.
Numerical experiments support the theoretical analysis and demonstrate that the proposed algorithm remains effective in high-dimensional settings.

\end{abstract}

\maketitle

\section{Introduction}\label{sec_1}
In this paper, we study the $L^2$-approximation of non-periodic functions on the unit cube $[0,1]^s$. This problem is of crucial importance in high-dimensional applications such as computational finance \cite{chen2021deep,chen2026deep}, machine learning \cite{bach2017breaking,adcock2024learning}, and uncertainty quantification \cite{narayan2021optimal,kaarnioja2022fast,gilbert2025multilevel}. However, the curse of dimensionality causes computational costs to grow exponentially with dimension, rendering traditional grid-based methods infeasible. This challenge motivates the search for novel function space assumptions and specialized algorithms.

To address this issue, quasi-Monte Carlo (QMC) methods offer a particularly promising direction. QMC methods are equal-weight quadrature rules that use low-discrepancy point sets, such as digital nets \cite{niederreiter1992random,dick2010digital} and lattices \cite{sloan1994lattice,dick2022lattice}, to tackle 
the curse of dimensionality. Although QMC methods are well established for numerical integration, adapting them to function approximation remains challenging. Existing results focus mainly on periodic functions, where the natural synergy between lattices and the Fourier series is exploited.

Pioneering work by Kuo et al. \cite{kuo2004lattice} established an $L^2$-approximation algorithm for functions in Korobov spaces (periodic Sobolev spaces). Their approach used a single rank-1 lattice of $M$ nodes, achieving an error rate of $\mathcal{O}(M^{-\alpha/2+\epsilon})$ for arbitrarily small $\epsilon>0$, where $\alpha$ is the smoothness parameter of the Korobov space. This rate is nearly optimal among algorithms restricted to function evaluations on a single rank-1 lattice \cite{byrenheid2017tight}, but falls short of the near‑optimal rate $\mathcal{O}(M^{-\alpha+\epsilon})$ without this restriction. More recent approaches, including multiple rank-1 lattices \cite{kammerer2019approximation}, subsampled lattices \cite{bartel2025minimal}, multiple shifted lattices with least-squares recovery \cite{cai2025lattice,du2026deterministic}, and median lattices \cite{pan2025l_2,pan2025universal}, all match this near-optimal rate with high probability.

We focus particularly on the median lattice algorithms introduced by Pan et al. \cite{pan2025universal}. Building on earlier work \cite{pan2025l_2}, their approach uses the median‑of‑means to estimate many Fourier coefficients from only a few randomized lattices. Its key advancement is universality: the algorithm requires no prior knowledge of the smoothness and weight parameters of the Korobov space. By leveraging the universal property of median QMC (see, e.g., \cite{goda2022construction,pan2025automatic,pan2026dimension}) and choosing a parameter‑independent index set of Fourier coefficients, the method achieves the near‑optimal error rate without parameter tuning.

To extend lattice-based methods to non-periodic settings, a common strategy is to employ the tent transformation to map functions into periodic spaces \cite{suryanarayana2016reconstruction,cools2016tent,kuo2021function}. Recent work has used this transformation to establish optimal error bounds for parameter-dependent approximation in half-period cosine spaces \cite{du2026deterministic}. In contrast, a direct digital-net approach, which does not require transforming the function, remains largely unexplored. To address this, we draw inspiration from the median QMC framework of \cite{pan2026dimension,pan2025universal} and develop a universal median digital-net algorithm for the $L^2$-approximation of non-periodic functions.

The proposed approach builds upon the Walsh series representation of non-periodic functions, analogous to the Fourier series in the periodic case.  Specifically, we consider target functions that admit an absolutely convergent Walsh series expansion
\begin{equation*}
   f(\bm{x})=\sum_{\bm{k}\in \mathbb{N}_0^s}\widehat{f}(\bm{k})\wal{\bm{k}}(\bm{x}),
\end{equation*}
where $\wal{\bm{k}}$ denotes the multivariate Walsh function and the Walsh coefficients are
\begin{equation*}
    \widehat{f}(\bm{k})=\int_{[0,1]^s}f(\bm{x})\wal{\bm{k}}(\bm{x})\,\mathrm{d}\bm{x}.
\end{equation*}

To recover the dominant Walsh coefficients of $f$, we employ digital nets, exploiting the structural correspondence between digital nets and Walsh functions. The procedure follows a three-stage design paralleling the one in \cite{pan2025universal}. First, for each coefficient in a preselected candidate index set $K$ (chosen to capture the dominant Walsh coefficients), we compute $R$ independent estimates using random digital-net samples of size $N=2^m$. We then apply a median-of-means estimator to both the coefficient estimates and their absolute values. The former yields the final coefficient estimates, while the latter serves as a selection criterion to identify the $N$ coefficients with the largest absolute values.
Finally, we construct a truncated Walsh series as an approximation to $f$ using the median estimates.

Following \cite{pan2026dimension}, we assume $f$ belongs to a function space equipped with a norm based on the fractional Vitali variation introduced in \cite{dick2008walsh}. However, our algorithm uses unweighted index sets instead of the weighted index set construction in \cite{pan2026dimension}. This choice slightly degrades the tractability, but it endows the algorithm with universality: it eliminates the dependence on specific weight parameters while maintaining strong convergence guarantees.

We prove that if $f$ possesses dominating mixed derivatives up to order $\alpha$, each having finite fractional Vitali variation of order $\lambda$ (formally defined in Section~\ref{subsec_ANOVA}), the algorithm achieves an $L^2$-error of $\mathcal{O}(N^{-\alpha-\lambda+\eta})$ for any $\eta>0$ with high probability. This convergence rate holds whenever the candidate index set $K$ contains the dominant Walsh coefficients determined by $\alpha$ and $\lambda$, and $R$ grows logarithmically in the size of $K$. In particular, for the universal index set $K^{\operatorname{univ}}(0)$ constructed in Section~\ref{subsec:univ}, which covers all admissible $\alpha$ and $\lambda$, the probabilistic error bound is $\mathcal{O}(s^{\alpha+\lambda}M^{-\alpha-\lambda+\eta})$ under the tractability condition~\eqref{condition_gamma}, where $M=NR$ is the total number of function evaluations and the implied constant is independent of the dimension $s$. 

To enable efficient implementation, we employ the fast Walsh–Hadamard transform and Gray code ordering to significantly reduce the cost of estimating Walsh coefficients over all indices in $K$, thereby making the universal approximation approach practical for large-scale problems.

The remainder of this paper is structured as follows. Section \ref{sec_2} reviews the necessary background, including digital nets, Walsh functions, ANOVA decomposition, and the function norm used in \cite{pan2026dimension}. Section \ref{sec_3} presents the universal median digital-net algorithm, constructs the ideal candidate index set under full knowledge of the parameters, and states the probabilistic $L^2$-error bound (Theorem \ref{thm_main}) along with the tractability result. Section \ref{sec_4} focuses on the practical implementation, providing two concrete constructions of the index set $K$ and discussing acceleration strategies via the fast Walsh--Hadamard transform and Gray code ordering. Section \ref{sec_5} reports numerical experiments that validate the theoretical findings. Section \ref{sec_6} offers a concluding discussion. The appendix contains the omitted proofs.

\section{Notation and background}\label{sec_2}
Throughout the paper, $f:[0,1]^s\to \mathbb{R}$ denotes the target function to be approximated,  where $s$ represents the dimension of its domain. We assume that $f$ is nonzero.
Let $\mathbb{N}$ denote the natural numbers and $\mathbb{N}_0=\mathbb{N}\cup \{0\}$. In particular, we define $\mathbb{N}_*^s=\mathbb{N}_0^s\setminus\{\bm{0}\}$.
Let $\mathbb{Z}_{\le \ell}=\{0,1,\ldots,\ell\}$ for $\ell\in\mathbb{N}_0$ and $\mathbb{Z}_{<\ell}=\{0,1,\ldots,\ell-1\}$ for $\ell\in\mathbb{N}$. We further denote $1{:}s=\{1,\ldots,s\}$. For a vector $\bm{x}\in\mathbb{R}^s$ and a subset $u\subseteq 1{:}s$, $\bm{x}_u$ denotes the subvector of $\bm{x}$ indexed by $u$, while $\bm{x}_{u^c}$ denotes the subvector indexed by $1{:}s\setminus u$. The indicator function $\bm{1}\{A\}$ equals $1$ if event
$A$ occurs and $0$ otherwise. The notation $\mathbb{U}(S)$ denotes the uniform distribution over a set $S$.
The cardinality of a set $K$ is $|K|$. Furthermore, for a finite $K\subseteq\mathbb{N}_0$, we define $\Vert K\Vert=\sum_{\ell\in K}\ell$. For vector $\bm{k}=(k_1,\ldots,k_s)\in\mathbb{N}_0^s$, its support is denoted by
\begin{equation*}
    \bm{s}(\bm{k})=\{j\in 1:s|k_j\ne 0\}.
\end{equation*}

We assume the target function $f$ is continuous, denoted as $f\in C([0,1]^s)$, with $\Vert f\Vert_{\infty}=\max_{\bm{x}\in[0,1]^s}|f(\bm{x})|$. The $L^p$-norm of $f$ is $\Vert f\Vert_{L^p}=\left(\int_{[0,1]^s}|f(\bm{x})|^p\,\mathrm{d}\bm{x}\right)^{1/p}$, and $L^p([0,1]^s)$ denotes the space of functions with finite $L^p$-norm.

To denote the classical partial derivatives of $f$, we adopt the following notation
\begin{equation*}
    f^{(\bm{\alpha})}(\bm{x})=\left(\prod_{j=1}^{s}\frac{\partial^{\alpha_j}}{\partial x_j^{\alpha_j}}\right)f(\bm{x}),\quad\forall \bm{\alpha}=(\alpha_1,\dots,\alpha_s)\in\mathbb{N}_0^s.
\end{equation*}

For integrand $g$ defined on $[0,1]^s$, QMC methods approximate the integral
\begin{equation*}
    \mu(g):=\int_{[0,1]^s}g(\bm{x})\,\mathrm{d}\bm{x}\quad\text{by}\quad\widehat{\mu}(g)=\frac{1}{N}\sum_{i=0}^{N-1}g(\bm{x}_i)
\end{equation*}
with $\bm{x}_0,\ldots,\bm{x}_{N-1}\in [0,1]^s$. In this paper, we will choose $\bm{x}_i$  to be the randomized base-2 digital nets described below, and we denote the number of points in the digital net by $N=2^m$.

\subsection{Digital net and complete random design}
For $m\in\mathbb{N}_0$ and $i\in\mathbb{Z}_{<2^m}$, we express $i$ in its binary form $i=\sum_{\ell=1}^{m}i_{\ell}2^{\ell-1}$ and denote $\Vec{i}=(i_1,\ldots,i_m)^\top\in \{0,1\}^m$. Analogously, for $a\in [0,1)$, we approximate its infinite binary expansion $a = \sum_{\ell=1}^{\infty}a_{\ell}2^{-\ell}$ by truncating to $E\in\mathbb{N}$ digits, yielding $\Vec{a}=(a_1,\ldots,a_E)^\top\in\{0,1\}^E$. Here, we adopt the convention that binary expansions do not contain an infinite tail of $1$’s, ensuring that the representation is unique. 

Furthermore, for any $k,t\in\mathbb{N}_0$ with binary representations $k=\sum_{\ell=1}^{\infty}k_{\ell}2^{\ell-1}$ and $t = \sum_{\ell=1}^{\infty}t_{\ell}2^{\ell-1}$, we define the \textbf{bitwise XOR} operator $\oplus$ as $w = k\oplus t$, where the binary representation $w = \sum_{\ell=1}^{\infty}w_{\ell}2^{\ell-1}$ satisfies $w_\ell = k_\ell + t_\ell \mod{2}$ for all $\ell$. For the $s$-dimensional case, the operator $\oplus$ is applied component-wise. 

Given $m\in \natu_0$ and $E\in \natu$, a \textbf{base-$2$ digital net} consists of $2^m$ points in $[0,1]^s$ determined by $s$ generating matrices $C_j\in \{0,1\}^{E\times m}$. The deterministic points $\bm{x}_i=(x_{i1},\ldots,x_{is})$ are constructed via
\begin{equation*}
    \Vec{x}_{ij}=C_j\Vec{i}\mod{2}\quad\text{for }i\in\mathbb{Z}_{<2^m}, j\in 1{:}s,
\end{equation*}
where $\Vec{x}_{ij}$  represents the first $E$ bits of $x_{ij}$. Typically, for unrandomized digital nets we have $E\le m$.

To incorporate randomness, we set the precision $E$ such that $E\ge m$ and modify the construction as follows
\begin{equation}\label{digital_net_C_D}
    \Vec{x}_{ij}=C_j\Vec{i} + \Vec{D}_j\mod{2},
\end{equation}
where $C_j\in\{0,1\}^{E\times m}$ is a random matrix and $\Vec{D}_j\in\{0,1\}^{E}$ is a random vector. The vector $\Vec{D}_j$ is called the \textbf{random digital shift} and consists of independent Bernoulli (0.5) entries. Regarding the randomization of $C_j$, we employ the \textbf{complete random design} \cite{pan2025automatic}, in which every entry of $C_j$ is sampled independently from $\mathbb{U}(\{0,1\})$. 

Now, let $\{\bm{x}_{i}^E,i\in \mathbb{Z}_{<2^m}\}$ denote the $E$-bit precision points generated by equation (\ref{digital_net_C_D}) with matrices $\bm{C}_{m,E}:=\{C_1,\ldots,C_s\}\subseteq \{0,1\}^{E\times m}$ and shifts $\bm{D}_{E}:=\{\Vec{D}_1,\ldots,\Vec{D}_s\}\subseteq \{0,1\}^{E}$. We denote the QMC estimator for an integral $\mu(g)$ as
\begin{equation}\label{digt_QMC}
    \widehat{\mu}_{\bm{C}_{m,E},\bm{D}_{E}}(g):=\frac{1}{2^m}\sum_{i=0}^{2^m-1}g(\bm{x}_i^E). 
\end{equation}
In this paper, our theoretical analysis is primarily based on the idealized case where $E=\infty$. In practical algorithms, $E$ is taken to be sufficiently large. The main theorem is nevertheless established for finite $E$, with the proof combining the $E=\infty$ analysis and stability arguments for the finite-precision setting.

\subsection{Walsh functions and error analysis for base-2 digital
nets}
Walsh functions are the natural Fourier basis for the analysis of base-2 digital nets. For $k\in\mathbb{N}_0$ and $x\in [0,1)$, the $k$'th univariate Walsh function $\wal{k}$ is defined by 
\begin{equation*}
    \wal{k}(x)=(-1)^{\Vec{k}^\top \Vec{x}},
\end{equation*}
where $\Vec{k}\in\{0,1\}^{\infty}$ and $\Vec{x}\in\{0,1\}^{\infty}$ are the binary expansions of $k$ and $x$, respectively. Since $\Vec{k}$ has only finitely many nonzero digits, the inner product $\Vec{k}^\top \Vec{x}$ is well-defined. It follows that if $k<2^E$, then $\wal{k}(x)$ is constant on elementary intervals of the form  $[t/2^E,(t+1)/2^E)$ for $t\in\mathbb{Z}_{<2^E}$, taking values in $\{-1,1\}$.

For the $s$-dimensional case, the multivariate Walsh function $\wal{\bm{k}}:[0,1)^s\to \{-1,1\}$ is defined as the tensor product of univariate Walsh functions
\begin{equation*}
    \wal{\bm{k}}(\bm{x})=\prod_{j=1}^{s}\wal{k_j}(x_j)=(-1)^{\sum_{j=1}^{s}\Vec{k}^\top_j\Vec{x}_j},
\end{equation*}
where $\bm{k}=(k_1,\ldots,k_s)\in\mathbb{N}_0^s$. These functions satisfy
\begin{equation*}
    \wal{\bm{k}\oplus\el}(\bm{x})=\wal{\bm{k}}(\bm{x})\wal{\el}(\bm{x}),\quad\forall \bm{k},\el\in\mathbb{N}_0^s.
\end{equation*}

The functions $\{\wal{\bm{k}}\}_{\bm{k}\in\mathbb{N}_0^s}$ constitute a complete orthonormal basis for $L^2([0,1]^s)$ \cite{dick2010digital}. Therefore, any $g\in L^2([0,1]^s)$ can be expressed via the Walsh expansion
\begin{equation}\label{wal_expan}
    g(\bm{x})=\sum_{\bm{k}\in \mathbb{N}_0^s}\widehat{g}(\bm{k})\wal{\bm{k}}(\bm{x})
\end{equation}
with Walsh coefficients given by
\begin{equation*}
    \widehat{g}(\bm{k})=\int_{[0,1]^s}g(\bm{x})\wal{\bm{k}}(\bm{x})\,\mathrm{d}\bm{x},
\end{equation*}
where the series in \eqref{wal_expan} converges in the $L^2$-sense.  Using this expansion, we
obtain the following error decomposition for the QMC estimator.
\begin{lemma}\label{lemma_error_g}
    Suppose $f\in C([0,1]^s)$ satisfies $\sum_{\el\in \mathbb{N}_0^s}|\widehat{f}(\el)|<\infty$. For any $\bm{k}\in\mathbb{N}_0^s$, let $g(\bm{x})=f(\bm{x})\wal{\bm{k}}(\bm{x})$. Then $\mu(g)=\widehat{f}(\bm{k})$  and
    \begin{equation}\label{mu_g_error}
        \widehat{\mu}_{\bm{C}_{m,\infty},\bm{D}_{\infty}}(g)-\mu(g)=\sum_{\el\in\mathbb{N}_*^s}Z(\el)W(\el)\widehat{f}(\bm{k}\oplus\el),
    \end{equation}
    where $ \widehat{\mu}_{\bm{C}_{m,\infty},\bm{D}_{\infty}}(g)$ is given by equation~\eqref{digt_QMC} with $E=\infty$, and
    \begin{align}
        Z(\el)=\bm{1}\left\{\sum_{j=1}^{s}\Vec{\ell}_j^\top C_j = \bm{0}\mod{2}\right\}\quad\text{and} \quad W(\el)=(-1)^{\sum_{j=1}^{s}\Vec{\ell}_j^\top \Vec{D}_j}.\nonumber
    \end{align}
\end{lemma}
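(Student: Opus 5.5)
The plan is to substitute the Walsh expansion of $f$ pointwise into the QMC sum and evaluate the resulting character sums over the digital net. First, $\mu(g)=\int f\,\wal{\bm{k}}=\widehat{f}(\bm{k})$ holds by definition of the Walsh coefficient. Since $\sum_{\el}|\widehat{f}(\el)|<\infty$, the Walsh series of $f$ converges absolutely and uniformly to a function that equals $f$ in $L^2$. It therefore equals $f$ almost everywhere. We need equality at the (random) net points, which I handle as follows. With $E=\infty$ and the random shift $\bm{D}_\infty$, each point $\bm{x}_i$ is marginally uniform on $[0,1)^s$, and almost surely its coordinates are not dyadic rationals. To upgrade a.e.\ equality to pointwise equality at these points, I would argue that the uniformly convergent Walsh series is continuous at every non-dyadic point, since each $\wal{\el}$ is. Because $f$ is also continuous, the two functions agree a.e.\ and are both continuous on the dense full-measure set of non-dyadic points, so they agree there. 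This matching at the sample points is the only delicate step; everything else is algebra.

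Next, using $\wal{\bm{k}}\wal{\bm{j}}=\wal{\bm{k}\oplus\bm{j}}$ and reindexing $\el=\bm{k}\oplus\bm{j}$ (a bijection of $\mathbb{N}_0^s$), we get
\begin{equation*}
g(\bm{x})=\sum_{\el\in\mathbb{N}_0^s}\widehat{f}(\bm{k}\oplus\el)\wal{\el}(\bm{x}).
\end{equation*}
Absolute summability lets us interchange this series with the finite average over $i$, so
\begin{equation*}
\widehat{\mu}(g)=\sum_{\el}\widehat{f}(\bm{k}\oplus\el)\,\frac{1}{2^m}\sum_{i=0}^{2^m-1}\wal{\el}(\bm{x}_i).
\end{equation*}

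The remaining step is the character sum. Since $\vec{x}_{ij}=C_j\vec{i}+\vec{D}_j \bmod 2$ and $\vec{\ell}_j$ has finitely many nonzero digits, we have
\begin{equation*}
\wal{\el}(\bm{x}_i)=(-1)^{\sum_j\vec{\ell}_j^\top\vec{D}_j}\,(-1)^{(\sum_j\vec{\ell}_j^\top C_j)\vec{i}}=W(\el)\,(-1)^{\bm{v}^\top\vec{i}},
\end{equation*}
where $\bm{v}=\sum_j\vec{\ell}_j^\top C_j \bmod 2\in\{0,1\}^m$. One caveat: the digits of $x_{ij}$ produced by the construction could in principle end in an infinite tail of $1$'s, which conflicts with the convention used to define $\wal{\el}$. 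This happens with probability zero under the random shift, so I would state it as a null event.

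Averaging over all $\vec{i}\in\{0,1\}^m$ gives $1$ if $\bm{v}=\bm{0}$ and $0$ otherwise. Hence
\begin{equation*}
\frac{1}{2^m}\sum_i\wal{\el}(\bm{x}_i)=Z(\el)W(\el).
\end{equation*}
For $\el=\bm{0}$ we have $Z=W=1$, and that term contributes $\widehat{f}(\bm{k})=\mu(g)$. Subtracting it yields \eqref{mu_g_error}.
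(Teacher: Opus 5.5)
Your proposal is correct and follows essentially the same route as the paper: substitute the pointwise Walsh expansion of $f$, reindex via $\bm{k}\oplus\el$, and evaluate the character sum over the net, a step the paper simply delegates to \cite[Theorem 1]{pan2025automatic}. The only difference is that the paper obtains pointwise convergence at every point from \cite[Theorem A.20]{dick2010digital}, whereas your self-contained continuity argument gives it only at points with non-dyadic coordinates, so you get the identity almost surely. That is all that is used later, and your null-event caveat about infinite tails of $1$'s is genuinely needed anyway when Walsh functions are evaluated through canonical binary expansions.
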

\begin{proof}
In the $L^2$-sense, we have 
\begin{align}
    g(\bm{x})&=f(\bm{x})\wal{\bm{k}}(\bm{x})=\sum_{\el\in\mathbb{N}_0^s}\widehat{f}(\el)\wal{\el}(\bm{x})\wal{\bm{k}}(\bm{x})\nonumber\\
    &=\sum_{\el\in\mathbb{N}_0^s}\widehat{f}(\el)\wal{\el\oplus\bm{k}}(\bm{x})=\sum_{\el\in\mathbb{N}_0^s}\widehat{f}(\el\oplus \bm{k})\wal{\el}(\bm{x}).\label{g_wal_exp}
\end{align}
This implies $\mu(g)=\widehat{g}(\bm{0})=\widehat{f}(\bm{k})$. Since $f$ is continuous with $\sum_{\el\in \mathbb{N}_0^s}|\widehat{f}(\el)|<\infty$, \cite[Theorem A.20]{dick2010digital} shows that 
\begin{equation*}
    f(\bm{x})=\sum_{\bm{k}\in \mathbb{N}_0^s}\widehat{f}(\bm{k})\wal{\bm{k}}(\bm{x})
\end{equation*}
holds pointwise. Consequently, equality (\ref{g_wal_exp}) holds pointwise, and equation \eqref{mu_g_error} follows from the same argument as in \cite[Theorem 1]{pan2025automatic}.
\end{proof}

\subsection{ANOVA decomposition and function norm}\label{subsec_ANOVA}
In this subsection, we focus on the function norm considered in \cite[Section 3]{pan2026dimension}. First, we employ the following ANOVA decomposition \cite{liu2006estimating} for $f\in C([0,1]^s)$
\begin{equation*}
    f = \sum_{u\subseteq 1{:}s}f_u,
\end{equation*}
where each component $f_u$ depends only on $\bm{x}_u$ and satisfies $\int_{0}^{1}f_u(\bm{x})\,\mathrm{d}x_j=0$ for any $j\in u$. This decomposition is unique in the $L^2$-sense.
Furthermore, if we enforce $f_u\in C([0,1]^s)$, then $f_u$ is given by the following recursive formula
\begin{equation}\label{f_u}
    f_u(\bm{x})=\int_{[0,1]^{s-|u|}}f(\bm{x})\,\mathrm{d}\bm{x}_{u^c}-\sum_{v\subsetneq u} f_{v}(\bm{x}),
\end{equation}
with $f_{\emptyset}(\bm{x})=\mu(f)$. By an abuse of notation, we write $f_u(\bm{x})=f_{u}(\bm{x}_u)$, where the latter $f_u:[0,1]^{|u|}\to \mathbb{R}$ is a $|u|$-dimensional function.

The ANOVA component $f_u$ has the Walsh expansion
\begin{equation}\label{wal_f_u}
f_u(\bm{x})=\sum_{\bm{s}(\bm{k})=u}\widehat{f}(\bm{k})\wal{\bm{k}}(\bm{x}),
\end{equation}
since $\wal{\bm{k}}$ depends only on $\bm{x}_{\bm{s}(\bm{k})}$ and $\int_{0}^{1}\wal{\bm{k}}(\bm{x})\,\mathrm{d}x_j=0$ whenever $j\in\bm{s}(\bm{k})$. 

Next, we introduce the fractional Vitali variation with $\mathfrak{p}=2$ from \cite{dick2008walsh} to quantify the variation of the ANOVA components $f_u$. For a function $g:[0,1]^d\to\mathbb{R}$ and a subcube $J=\prod_{j=1}^s[a_j,b_j)\subseteq[0,1)^d$, the alternating sum over $J$ is given by 
\begin{equation*}
    \Delta(g,J)=\sum_{v\subseteq 1{:}d}(-1)^{d-|v|}g(\bm{a}_v,\bm{b}_{v^c}),
\end{equation*}
where $(\bm{a}_v,\bm{b}_{v^c})$ denotes the vertex of $J$ with $x_j=a_j$ for $j\in v$ and $x_j=b_j$ for $j\in 1{:}d\setminus v$. For $\lambda\in (0,1]$, the fractional Vitali variation of order $\lambda$ is defined as
\begin{equation*}
    V_{\lambda}^{(d)}(g)=\sup_{\mathcal{P}}\left(\sum_{J\in\mathcal{P}}\operatorname{Vol}(J)\left|\frac{\Delta(g,J)}{\operatorname{Vol}(J)^{\lambda}}\right|^2\right)^{1/2},
\end{equation*}
where the supremum is over all partitions $\mathcal{P}$ of $[0,1)^d$ into subcubes and $\operatorname{Vol}(J)$ is the volume of subcube $J$. When $d=0$, we set $V_\lambda^{(0)}(g)=|g|$ by convention. 

Furthermore, we recall the partial fractional Vitali variation introduced in \cite{pan2025automatic}. For $v\subseteq 1{:}d$, define the partial integral operator $I_v:L^1([0,1]^d)\to L^1([0,1]^{|v|})$ as 
\begin{equation*}
    I_v(g)(\bm{x}_v)=\int_{[0,1]^{d-|v|}}g(\bm{x}_v,\bm{x}_{v^c})\,\mathrm{d}\bm{x}_{v^c}.
\end{equation*}
Let $D_{v^c}$ be the class of functions
\begin{equation*}
    D_{v^c}:=\left\{\rho_{v^c}: \rho_{v^c}(\bm{x})=\prod_{j\in v^c} \rho_j(x_j),\rho_j\in C([0,1]), \rho_j\ge 0, \Vert\rho_j\Vert_{\infty}\le 1 \right\}.
\end{equation*}
The partial fractional Vitali variation of order $\lambda$ with respect to $v\subseteq 1:d$ is given by 
\begin{equation*}
    V_\lambda^v(g):=\sup_{\rho_{v^c}\in D_{v^c}}V_{\lambda}^{(|v|)}(I_v(g\rho_{v^c})).
\end{equation*}
By convention, $V_\lambda^{1{:}d}(g)=V_\lambda^{(d)}(g)$. An important special case arises when $\lambda=1$, where we have the estimate 
\begin{equation}\label{eqn:V1vfL2}
 V^{v}_1(g)\leq \Vert g^{(\bsone_{v},\bszero_{v^c})}\Vert_{L^2}   
\end{equation}
if the weak derivative $g^{(\bsone_{v},\bszero_{v^c})}$ exists and belongs to $L^2([0,1]^d)$.
Here $(\bsone_{v},\bszero_{v^c})$ denotes the vector whose $j$-th entry equals $1$ if $j\in v$ and equals $0$ otherwise.

We now define the function norm for the target function $f$. For $\alpha\in\mathbb{N}_0,\lambda\in (0,1]$ and a nonempty subset $u\subseteq 1{:}s$, define
\begin{equation}\label{f_u_a_l}
    \Vert f\Vert_{u,\alpha,\lambda} = \sup_{v\subseteq u}\sup_{\substack{\bm{\alpha}_u\in\mathbb{Z}_{\le\alpha}^{|u|}\\ \alpha_j=\alpha\  \forall j\in v \\ \alpha_j>0\ \forall j\in u\setminus v}  } V_\lambda^v(f_u^{(\bm{\alpha_u})}),
\end{equation}
where $f_u$ is the ANOVA component of $f$ given by (\ref{f_u}). When $\lambda=1$ and the weak mixed derivative $f^{(\alpha+1,\dots,\alpha+1)}$ exists and belongs to $L^2([0,1]^s)$, equation~\eqref{eqn:V1vfL2} implies the bound
$$\Vert f\Vert_{u,\alpha,1}\leq \sup_{\bsalpha_u\in \{1:(\alpha+1)\}^{|u|}} \Vert f_u^{(\bsalpha_u)}\Vert_{L^2}.$$

We further set $\Vert f\Vert_{\emptyset,\alpha,\lambda} = |\mu(f)|$ and define the global variation norm
\begin{equation*}
    \Vert f\Vert_{s,\alpha,\lambda}=\sup_{u\subseteq 1{:}s}\Vert f\Vert_{u,\alpha,\lambda}.
\end{equation*}
For nonzero $f$, $\Vert f\Vert_{s,\alpha,\lambda}>0$, and we define the \textbf{relative variation} of ANOVA component $f_u$ as 
\begin{equation}\label{gamma_u}
    \gamma_u=\frac{\Vert f\Vert_{u,\alpha,\lambda}}{\Vert f\Vert_{s,\alpha,\lambda}},
\end{equation}
which satisfies $\sup_{u\subseteq 1{:}s}\gamma_{u}=1$. We will sometimes refer to $\gamma_u$ as weights because they serve as weight parameters in the construction of anisotropic index sets.

\section{The median algorithm and its error}\label{sec_3}
In this section, we introduce the median digital-net algorithm and construct the ideal Walsh index set used in our analysis. We then present the main result, Theorem \ref{thm_main}, which provides a probabilistic bound on the $L^2$-approximation error. Its proof is deferred to Appendix \ref{appendix_main}. We conclude this section with a discussion on the tractability properties of our median digital-net algorithm.

\subsection{The universal median digital-net algorithm}
To illustrate the core ideas of the proposed method, we first present Algorithm \ref{alg:idealized_median}, an idealized algorithm inspired by \cite[Algorithm 3.1]{pan2025universal} that uses complete random designs with precision $E=\infty$ and assumes no rounding errors.

\begin{Algorithm}\label{alg:idealized_median}
For given $m\in\mathbb{N}$, odd $R\in\mathbb{N}$, finite subset $K\subseteq \mathbb{N}_0^s$ and target function $f:[0,1]^s\to\mathbb{R}$, perform the following steps:
\begin{enumerate}
    \item \textbf{For} $r = 1$ to $R$, do:
    \begin{enumerate}
        \item Randomly draw $C_{1,r},\ldots,C_{s,r}$ i.i.d. uniform from $\{0,1\}^{\infty \times m}$.
        \item Randomly draw $\vec{D}_{1,r},\ldots,\vec{D}_{s,r}$ i.i.d. uniform from $\{0,1\}^{\infty}$.
        \item Denote $N = 2^m$. For $i$ from $0$ to $N-1$, compute $\mathbf{x}_{i,r}=(x_{i1,r},\dots,x_{is,r})$ by
        \[
        \vec{x}_{ij,r}=C_{j,r}\vec{i} + \vec{D}_{j,r}\pmod{2}.
        \]
        \item For $i$ from $0$ to $N-1$, evaluate $f$ at $\mathbf{x}_{i,r}$ and store $f(\mathbf{x}_{i,r})$.
        \item For ${\bsk}\in K$, compute
        \begin{equation}\label{coe_in_alg1}
               \widehat{f}_{N,r}({\bsk})=\frac{1}{N}\sum_{i=0}^{N-1}f(\mathbf{x}_{i,r})\operatorname{wal}_{{\bsk}}(\mathbf{x}_{i,r}).
        \end{equation}
    \end{enumerate}
    \item Order ${\bsk}\in K$ such that the medians of $|\widehat{f}_{N,r}({\bsk})|$ over $r\in 1{:}R$ are in descending order, i.e.,
    \[
    \operatorname*{median}_{r\in 1{:}R}|\widehat{f}_{N,r}({\bsk}_1)|
    \ge
    \operatorname*{median}_{r\in 1{:}R}|\widehat{f}_{N,r}({\bsk}_2)|
    \ge \cdots
    \]
    and set $H_N:=\{{\bsk}_1,\ldots,{\bsk}_N\}$ if $|K|\ge N$, else $H_N=K$.
    \item Define the median estimator
    \[
    \widehat{f}_{N,\mathrm{med}}({\bsk}):=\operatorname*{median}_{r\in 1{:}R}\widehat{f}_{N,r}({\bsk})
    \]
    and the corresponding approximation
    \[
    A_{m}(f)(\mathbf{x}):=\sum_{{\bsk}\in H_N}\widehat{f}_{N,\mathrm{med}}({\bsk})\operatorname{wal}_{{\bsk}}(\mathbf{x}).
    \]
\end{enumerate}
\end{Algorithm}

In practice, our algorithm operates with finite precision $E$ and is subject to rounding errors, which are assumed to be bounded by the machine epsilon $\epsilon>0$. Under these constraints, we obtain the practical variant summarized in Algorithm \ref{alg:ep_median}.

\begin{Algorithm}\label{alg:ep_median}
For given $m\in\mathbb{N}$, odd $R\in\mathbb{N}$, finite subset $K\subseteq \mathbb{N}_0^s$, target function $f:[0,1]^s\to\mathbb{R}$, precision $E\in\mathbb{N}$ and machine epsilon $\epsilon>0$, perform the following steps:
\begin{enumerate}
    \item \textbf{For} $r = 1$ to $R$, do:
    \begin{enumerate}
        \item Randomly draw $C^E_{1,r},\ldots,C^E_{s,r}$ i.i.d. uniform from $\{0,1\}^{E\times m}$.
        \item Randomly draw $\vec{D}^E_{1,r},\ldots,\vec{D}^E_{s,r}$ i.i.d. uniform from $\{0,1\}^{E}$.
        \item Denote $N=2^m$. For $i$ from $0$ to $N-1$, compute $\mathbf{x}_{i,r}^E=(x^E_{i1,r},\dots,x^E_{is,r})$ by
        \begin{equation}\label{E_digit}
        \vec{x}^E_{ij,r}=C^E_{j,r}\vec{i} + \vec{D}^E_{j,r}\pmod{2}.
        \end{equation}
        \item For $i$ from $0$ to $N-1$, evaluate $f$ at $\mathbf{x}_{i,r}^E$ and store $f(\mathbf{x}_{i,r}^E)$.
        \item For ${\bsk}\in K$, compute $\widehat{f}^{E,\epsilon}_{N,r}({\bsk})$ subject to rounding errors:
        \begin{equation}\label{coe_in_alg2}
        \widehat{f}^{E,\epsilon}_{N,r}({\bsk})=\frac{1}{N}\sum_{i=0}^{N-1}f(\mathbf{x}_{i,r}^E)\operatorname{wal}_{{\bsk}}(\mathbf{x}_{i,r}^E) + \epsilon_{\bsk,r} \quad \text{for}\quad |\epsilon_{\bsk,r}|\leq \epsilon.
        \end{equation}
    \end{enumerate}
    \item Order ${\bsk}\in K$ such that the medians of $|\widehat{f}^{E,\epsilon}_{N,r}({\bsk})|$ over $r\in 1{:}R$ are in descending order, i.e.,
    \[
    \operatorname*{median}_{r\in 1{:}R}|\widehat{f}^{E,\epsilon}_{N,r}({\bsk}_1)|
    \ge
    \operatorname*{median}_{r\in 1{:}R}|\widehat{f}^{E,\epsilon}_{N,r}({\bsk}_2)|
    \ge \cdots
    \]
    and set $H^{E,\epsilon}_N:=\{{\bsk}_1,\ldots,{\bsk}_N\}$ if $|K|\ge N$, else $H^{E,\epsilon}_N=K$.
    \item For ${\bsk}\in H^{E,\epsilon}_N$, define the median estimator
    \[
    \widehat{f}^{E,\epsilon}_{N,\mathrm{med}}({\bsk}):=\operatorname*{median}_{r\in 1{:}R}\widehat{f}^{E,\epsilon}_{N,r}({\bsk})
    \]
    and return the median digital-net $L^2$-approximation for $f$
    \[
    A^{E,\epsilon}_{m}(f)(\mathbf{x}):=\sum_{{\bsk}\in H^{E,\epsilon}_N}\widehat{f}^{E,\epsilon}_{N,\mathrm{med}}({\bsk})\operatorname{wal}_{{\bsk}}(\mathbf{x}).
    \]
\end{enumerate}
\end{Algorithm}

The following theorem characterizes the discrepancy between the two algorithms in estimating Walsh coefficients $\widehat{f}(\bm{k})$.

\begin{theorem}\label{thm_dis}
    Assume that for each $1\le r\le R$ and $1\le j\le s$, the matrix $C^E_{j,r}$ in Algorithm \ref{alg:ep_median} is given by the first $E$ rows of the matrix $C_{j,r}$ in Algorithm \ref{alg:idealized_median}, and likewise, the vector $\Vec{D}^E_{j,r}$ in Algorithm \ref{alg:ep_median} comprises the first $E$ entries of the vector $\Vec{D}_{j,r}$ in Algorithm \ref{alg:idealized_median}. If the precision $E$ satisfies
    \begin{equation}\label{condition_E}
        2^E>\max_{\bm{k}=(k_1,\ldots,k_s)\in K}\max_{1\le j\le s }k_j,
    \end{equation}
    then for each $\bm{k}\in K$ we have 
    \begin{equation}\label{assumption_ep}
        |\widehat{f}_{N,r}(\bm{k})-\widehat{f}^{E,\epsilon}_{N,r}(\bm{k})|\le \epsilon+\omega_f(\sqrt{s}2^{-E}),
    \end{equation}
    where $\widehat{f}_{N,r}(\bm{k})$ and $\widehat{f}^{E,\epsilon}_{N,r}(\bm{k})$ are estimators of $\widehat{f}(\bm{k})$ defined in (\ref{coe_in_alg1}) and (\ref{coe_in_alg2}), respectively, and 
    \begin{equation*}
        \omega_f(\delta)=\sup\{|f(\bm{x})-f(\bm{x}')|:\bm{x},\bm{x}'\in[0,1]^s,\Vert \bm{x}-\bm{x}'\Vert_2\le \delta\}\quad\text{for }\delta>0
    \end{equation*}
    is the modulus of continuity of $f$.
\end{theorem}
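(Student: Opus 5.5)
We compare the two estimators term by term, after coupling the two algorithms as in the hypothesis. Under that coupling, for each $i$ and $j$ the first $E$ binary digits of $x_{ij,r}$ are $C^E_{j,r}\vec{i}+\vec{D}^E_{j,r} \bmod 2$. These are exactly the digits of $x^E_{ij,r}$, whose digits beyond position $E$ are all zero. Hence $x^E_{ij,r}=\lfloor 2^E x_{ij,r}\rfloor/2^E$, so $0\le x_{ij,r}-x^E_{ij,r}<2^{-E}$ and $\Vert \mathbf{x}_{i,r}-\mathbf{x}^E_{i,r}\Vert_2\le \sqrt{s}\,2^{-E}$.

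There is one subtlety. The idealized digit vector $C_{j,r}\vec{i}+\vec{D}_{j,r}$ could in principle end in an infinite tail of $1$'s, which conflicts with the uniqueness convention. This happens only with probability zero. Alternatively, we can read $x_{ij,r}$ as the real number $\sum_\ell d_\ell 2^{-\ell}$ defined by its digit sequence $d_\ell$, since the Walsh evaluation in the estimator only uses those digits. In either reading the distance bound above still holds, up to replacing $<$ by $\le$.

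Next we show the Walsh factors agree. Fix $\bm{k}\in K$. By condition \eqref{condition_E}, each $k_j<2^E$, so the binary expansion $\vec{k}_j$ is supported on its first $E$ digits. Hence $\vec{k}_j^\top\vec{x}_{ij,r}$ involves only the first $E$ digits of $x_{ij,r}$. These coincide with those of $x^E_{ij,r}$, which gives $\wal{\bm{k}}(\mathbf{x}_{i,r})=\wal{\bm{k}}(\mathbf{x}^E_{i,r})$. Equivalently, we can use the fact recorded earlier that $\wal{k}$ is constant on intervals $[t/2^E,(t+1)/2^E)$, since both points lie in the same such interval.

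We then bound the difference directly:
\begin{equation*}
|\widehat{f}_{N,r}(\bm{k})-\widehat{f}^{E,\epsilon}_{N,r}(\bm{k})|\le \frac{1}{N}\sum_{i=0}^{N-1}|f(\mathbf{x}_{i,r})-f(\mathbf{x}^E_{i,r})|\,|\wal{\bm{k}}(\mathbf{x}_{i,r})|+|\epsilon_{\bm{k},r}|.
\end{equation*}
Each Walsh factor has modulus $1$, each function difference is at most $\omega_f(\sqrt{s}2^{-E})$, and $|\epsilon_{\bm{k},r}|\le\epsilon$. This yields \eqref{assumption_ep}.

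The only delicate point is the digit-level identification in the first step, in particular the infinite-tail-of-ones convention. We handle it by working with digit sequences rather than with the real numbers themselves, or by discarding a null event. The rest is routine.
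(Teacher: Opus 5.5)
Your proof is correct and follows essentially the same route as the paper: both points share their first $E$ binary digits (hence the same interval $[t/2^E,(t+1)/2^E)$), so the Walsh factors agree by \eqref{condition_E}, and the difference reduces to $|\epsilon_{\bm{k},r}|$ plus an average of $|f(\mathbf{x}_{i,r})-f(\mathbf{x}^E_{i,r})|\le\omega_f(\sqrt{s}2^{-E})$. The paper cites an external lemma for the distance bound, whereas you verify it directly; you also handle digit sequences with an infinite tail of $1$'s (as a null event or via the digit-level reading), a point the paper passes over silently.
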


\begin{proof}
    The construction of the digital net points $\bm{x}_{i,r}$ in Algorithm \ref{alg:idealized_median} and $\bm{x}_{i,r}^E$ in Algorithm \ref{alg:ep_median} ensures that each entry $x_{ij,r}^E$ of $\bm{x}_{i,r}^E$ has an $E$-digit binary expansion, which coincides with the first $E$ digits of the binary expansion of the corresponding entry $x_{ij,r}$ in $\bm{x}_{i,r}$. Consequently, both $x_{ij,r}$ and $x_{ij,r}^E$ lie in the same interval $[t/2^E,(t+1)/2^E)$ for some $0\le t\le 2^E-1$.
    
    Now for any $\bm{k}\in K$, since $2^E>k_j$, we have $\wal{k_j}(x_{ij,r})=\wal{k_j}(x_{ij,r}^E)$. So $\wal{\bm{k}}(\bm{x}_{i,r})=\wal{\bm{k}}(\bm{x}_{i,r}^E)\in\{-1,1\}$. Hence,
    \begin{equation*}
        |f(\bm{x}_{i,r})\wal{\bm{k}}(\bm{x}_{i,r})-f(\bm{x}_{i,r}^E)\wal{\bm{k}}(\bm{x}_{i,r}^E)|=|f(\bm{x}_{i,r})-f(\bm{x}_{i,r}^E)|.
    \end{equation*}
    Using \cite[Lemma 1]{pan2024super}, it follows that
    \begin{align*}
        &|\widehat{f}_{N,r}(\bm{k})-\widehat{f}^{E,\epsilon}_{N,r}(\bm{k})|\\
        &\le |\epsilon_{\bm{k},r}|+\frac{1}{N}\sum_{i=0}^{N-1}|f(\bm{x_{i,r}})\wal{\bm{k}}(\bm{x_{i,r}})-f(\bm{x}_{i,r}^E)\wal{\bm{k}}(\bm{x}_{i,r}^E)|\\
        &=| \epsilon_{\bm{k},r}|+\frac{1}{N}\sum_{i=0}^{N-1}|f(\bm{x_{i,r}})-f(\bm{x}_{i,r}^E)|\\
        &\le \epsilon + \omega_f(\sqrt{s}2^{-E}).\qedhere
    \end{align*}
\end{proof}

Throughout the remainder of the paper, we write
\begin{equation}\label{constant_ep1}
    \epsilon_1=\epsilon + \omega_f(\sqrt{s}2^{-E}).
\end{equation}
We note that $\epsilon_1$ is negligible for sufficiently small $\epsilon$ and sufficiently large $E$.

\subsection{Anisotropic index sets for theoretical analysis}
In this subsection, we define anisotropic index sets needed for theoretical analysis, followed by a discussion of their relevant properties.

For $k\in\mathbb{N}_0$, we write the binary representation of $k$ differently by 
\begin{equation*}
k=\sum_{j=1}^{v}2^{c_j-1},    
\end{equation*}
where $v\in\mathbb{N}_0$ and $c_1>c_2>\cdots>c_v\ge 1$. For $\alpha\in\mathbb{N}_0$ and $\lambda\in (0,1]$, define
\begin{equation*}
\mu_{\alpha,\lambda}(k)=\begin{cases}
    \sum_{j=1}^v c_j, &\text{if } v\le \alpha,\\
    \lambda c_{\alpha+1} + \sum_{j=1}^{\alpha}c_j, &\text{if } v>\alpha.
\end{cases}
\end{equation*}
The weight $\mu_{\alpha,\lambda}$ serves as a fractional extension of the weight $\mu_\alpha$ introduced by Dick  \cite{dick2008walsh,dick2009decay}.
For $T\geq 0$ and $u\subseteq 1{:}s$, we consider the index set
\begin{equation}\label{K_u}
    K_{u,\alpha,\lambda}(T):=\left\{\bm{k}\in \mathbb{N}_0^s:\bm{s}(\bm{k})=u,\mu_{\alpha,\lambda}(\bsk)\le T \right\},
\end{equation}
where $\mu_{\alpha,\lambda}(\bsk)=\sum_{j\in \bm{s}(\bm{k})}\mu_{\alpha,\lambda}(k_j)$ if $\bsk\neq \bszero$ and $\mu_{\alpha,\lambda}(\bszero)=0$. By convention, we let $K_{u,\alpha,\lambda}(T) = \emptyset$ if $T < 0$. This definition of $K_{u,\alpha,\lambda}(T)$ is consistent with the one given in \cite[Section 4]{pan2026dimension}, although it is expressed here more compactly in terms of $\mu_{\alpha,\lambda}$. Consequently, an upper bound on the cardinality $|K_{u,\alpha,\lambda}(T)|$ is provided by \cite[Lemma 6]{pan2026dimension}.

\begin{lemma}\label{lemma_K_u(T)}
    For $\alpha\in\mathbb{N}_0,\lambda\in(0,1]$ and $u\subseteq 1{:}s$,
    \begin{equation*}
        |K_{u,\alpha,\lambda}(T)|\le 2^{T/(\alpha+\lambda)}\left(A_{\alpha,\lambda}\max\left\{T/(\alpha+\lambda),1\right\} + B_{\alpha,\lambda} \right)^{|u|},
    \end{equation*}
    where
    \begin{equation}\label{constant_A_B}
        A_{\alpha,\lambda} = \frac{1}{\alpha!(2^{1/(\alpha+\lambda)}-1)^\alpha}\quad\text{and}\quad
        B_{\alpha,\lambda} = \sum_{\alpha'=1}^{\alpha}\frac{1}{(\alpha')! (2^{1/(\alpha+\lambda)}-1)^{\alpha'}}.
    \end{equation}
\end{lemma}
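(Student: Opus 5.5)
Since the paper says the bound follows from \cite[Lemma 6]{pan2026dimension}, the most direct route is to invoke it, but I sketch a self-contained argument. Write $d=|u|$ and $\beta=\alpha+\lambda$. The plan is a generating-function (Rankin/Chernoff) argument. For any $\theta>0$,
\begin{equation*}
|K_{u,\alpha,\lambda}(T)|\le \sum_{\bm{s}(\bsk)=u} 2^{\theta(T-\mu_{\alpha,\lambda}(\bsk))}=2^{\theta T}\Bigl(\sum_{k\ge1}2^{-\theta\mu_{\alpha,\lambda}(k)}\Bigr)^{d}.
\end{equation*}
This crude form diverges at $\theta=1/\beta$, so I will instead run it with a cutoff that exploits the constraint $\mu\le T$. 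The idea is to split each coordinate $k_j$ according to whether it has at most $\alpha$ binary digits or more than $\alpha$.

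\textbf{Counting one coordinate.} Fix the number of digits $v'\le\alpha$ that contribute fully. For $v'<\alpha$ and $k$ with exactly $v'$ digits, $\mu(k)=c_1+\dots+c_{v'}$. For $v>\alpha$, $\mu(k)=c_1+\dots+c_\alpha+\lambda c_{\alpha+1}$, and the digits below $c_{\alpha+1}$ are free, which gives $2^{c_{\alpha+1}-1}$ choices. Grouping by the top $\alpha+1$ digit positions, one coordinate with $\mu$-budget $t$ has weighted count
\begin{equation*}
\sum_{c_1>\dots>c_{\alpha+1}} 2^{c_{\alpha+1}-1}\,\bm 1\{c_1+\dots+c_\alpha+\lambda c_{\alpha+1}\le t\}.
\end{equation*}
Since $c_j\ge c_{\alpha+1}$, we have $\beta c_{\alpha+1}\le t$. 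I will bound $2^{c_{\alpha+1}}\le 2^{(t-\text{slack})/\beta}$, where the slack is $\sum_{j\le\alpha}(c_j-c_{\alpha+1})$. Summing $2^{-\text{slack}/\beta}$ over the gaps turns each of the $\alpha$ gap sums into a geometric series with ratio $2^{-1/\beta}$, i.e.\ a factor $1/(2^{1/\beta}-1)$. Strict ordering of the gaps together with symmetrization gives the $1/\alpha!$, and the free position $c_{\alpha+1}\le t/\beta$ supplies the factor $\max\{t/\beta,1\}$. This yields the $A_{\alpha,\lambda}$ term. The cases with $v'=\alpha'\le\alpha$ digits, with no tail, give terms $1/(\alpha'!(2^{1/\beta}-1)^{\alpha'})$ times $2^{t/\beta}$ without the linear factor, and these sum to $B_{\alpha,\lambda}$. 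So a single coordinate with budget $t$ contributes at most $2^{t/\beta}(A\max\{t/\beta,1\}+B)$.

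\textbf{Combining coordinates.} For $d$ coordinates the budgets $t_j$ must satisfy $\sum t_j\le T$, and the product structure $2^{\sum t_j/\beta}\le 2^{T/\beta}$ suggests the final bound. To avoid summing over budget splits, I will do the decomposition jointly: bound every free tail factor $2^{c^{(j)}_{\alpha+1}}$ by $2^{(T-\text{total slack}-\text{other weights})/\beta}$ all at once. Each coordinate then contributes independently either a geometric gap factor together with a position count at most $\max\{T/\beta,1\}$, or a pure-digit factor. This gives $2^{T/\beta}(A\max\{T/\beta,1\}+B)^{d}$. The subtle point is that the lowest summation position in each coordinate must be absorbed into the exponent. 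I expect to handle this by measuring every digit position relative to $T/\beta$ and using $\sum\lambda c_{\alpha+1}+\sum c_j\le T$.

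\textbf{Main obstacle.} The delicate step is the joint accounting: the position sums must produce only the linear factor $\max\{T/\beta,1\}$ per coordinate, with no extra logarithms, and the $1/\alpha'!$ factors must come out exactly from the ordering of the digits. If this bookkeeping becomes unwieldy, I fall back on the paper's remark that $K_{u,\alpha,\lambda}(T)$ coincides with the set in \cite[Section 4]{pan2026dimension} and cite \cite[Lemma 6]{pan2026dimension} directly. That reduces the proof to checking that the two definitions agree, which amounts to unpacking $\mu_{\alpha,\lambda}$ coordinatewise.
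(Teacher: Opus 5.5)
Your proposal is correct. The paper's proof is just your fallback. It asserts that $K_{u,\alpha,\lambda}(T)$, written via $\mu_{\alpha,\lambda}$, is the set of \cite[Section 4]{pan2026dimension}, and then cites \cite[Lemma 6]{pan2026dimension}.

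Your primary route is a genuinely self-contained alternative. You split each coordinate by whether it has at most $\alpha$ or more than $\alpha$ nonzero bits, count the free low bits explicitly, and apply a Rankin bound with exponent $1/(\alpha+\lambda)$. This is the same device, and the same estimate $\sum_{\kappa\in\mathcal S_{\alpha'}}\prod_{\ell\in\kappa}\rho^\ell\le 1/(\alpha'!(\rho^{-1}-1)^{\alpha'})$ from \cite[equation (20)]{pan2026dimension}, that the paper uses in the proof of Lemma~\ref{lemma_L1_Ku}. Here it is applied with $\rho=2^{-1/(\alpha+\lambda)}$ and produces $A_{\alpha,\lambda}$ and $B_{\alpha,\lambda}$ directly. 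The citation buys brevity. Your argument removes the need to check that two definitions agree, and it yields a slightly sharper constant.

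The step you call the main obstacle is routine once stated precisely. What you bound by $2^{(T-\cdots)/\beta}$ must be the \emph{product} of the tail multiplicities, not each factor separately; bounding each factor would give $2^{|v|T/\beta}$. In your notation $\beta=\alpha+\lambda$, let $v\subseteq u$ be the coordinates with more than $\alpha$ bits. For $j\in v$ let $\sigma_j=\sum_{i\le\alpha}(c^{(j)}_i-c^{(j)}_{\alpha+1})$, and let $O=\sum_{j\in u\setminus v}\mu_{\alpha,\lambda}(k_j)$. Since $\mu_{\alpha,\lambda}(\bsk)=\sum_{j\in v}(\sigma_j+\beta c^{(j)}_{\alpha+1})+O$, we get
\begin{equation*}
\prod_{j\in v}2^{c^{(j)}_{\alpha+1}-1}\,\bm{1}\{\mu_{\alpha,\lambda}(\bsk)\le T\}\le 2^{-|v|}\,2^{(T-\sum_{j\in v}\sigma_j-O)/\beta}\,\bm{1}\Bigl\{\beta\sum_{j\in v}c^{(j)}_{\alpha+1}\le T\Bigr\}.
\end{equation*}
The right-hand side factorizes:
\begin{itemize}
\item The gaps are now unconstrained and give exactly $A_{\alpha,\lambda}$ per tail coordinate.
\item The non-tail bit sets are unconstrained and give at most $B_{\alpha,\lambda}$ per non-tail coordinate.
\item The positions $c^{(j)}_{\alpha+1}$ appear only in the indicator, so their sum is a pure count, at most $\lfloor T/\beta\rfloor^{|v|}\le\max\{T/\beta,1\}^{|v|}$.
\end{itemize}
So what gets absorbed into the exponent is the multiplicity $2^{c_{\alpha+1}-1}$, not the position. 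The position is simply counted, which is why exactly one linear factor per coordinate appears and no logarithms arise. Summing over $v\subseteq u$ gives $2^{T/\beta}\bigl(\tfrac12 A_{\alpha,\lambda}\max\{T/\beta,1\}+B_{\alpha,\lambda}\bigr)^{|u|}$, which is the lemma with a factor $\tfrac12$ to spare. The case $T<0$ is trivial.
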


Now for $\gamma_u$ defined in (\ref{gamma_u}), denote
\begin{equation}\label{Gamma(m)}
    \Gamma_{\alpha,\lambda,\beta,\bm{\gamma}}(m):=1+\sum_{u\subseteq 1:s}\gamma_u^{\frac{1}{\alpha+\lambda+1/2}} \beta^{-|u|} (A_{\alpha,\lambda}m + B_{\alpha,\lambda})^{|u|},
\end{equation}
where $A_{\alpha,\lambda},B_{\alpha,\lambda}$ are given in (\ref{constant_A_B}), and $\beta \ge 1$ is a shrinkage parameter reserved for later analysis. For $\delta\in (0,1)$, let
\begin{align}\label{T_u_m_gamma}
    T^{}_{u,m,\alpha,\lambda,\beta,\bm{\gamma},\delta}:=(\alpha+\lambda)\bigg[&m+\log_2\delta-\log_2\left(\Gamma_{\alpha,\lambda,\beta,\bm{\gamma}}(m)\right)\\ 
    & +\frac{\log_2(\gamma_u)}{\alpha+\lambda+1/2} -|u|\log_2\beta\bigg].\nonumber
\end{align}
We denote the anisotropic index set
\begin{equation}\label{K_gamma}
    K_{m,\alpha,\lambda,\beta,\bm{\gamma},\delta}:=\bigcup_{u\subseteq 1:s}K_{u,\alpha,\lambda}(T_{u,m,\alpha,\lambda,\beta,\bm{\gamma},\delta}).
\end{equation}
The following lemma bounds the size of the index set.

\begin{lemma}\label{card}
  The cardinality of $K_{m,\alpha,\lambda,\beta,\bm{\gamma},\delta}$ satisfies  $|K_{m,\alpha,\lambda,\beta,\bm{\gamma},\delta}|\le \delta 2^m$.
\end{lemma}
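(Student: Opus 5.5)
We bound the union in \eqref{K_gamma} by the sum of the cardinalities of its pieces and then apply Lemma~\ref{lemma_K_u(T)} to each $u$. Write $T_u:=T_{u,m,\alpha,\lambda,\beta,\bm{\gamma},\delta}$ and $\Gamma:=\Gamma_{\alpha,\lambda,\beta,\bm{\gamma}}(m)$. If $T_u<0$, then $K_{u,\alpha,\lambda}(T_u)=\emptyset$ by convention, so we may restrict to $u$ with $T_u\ge 0$. We also drop terms with $\gamma_u=0$: then $\log_2\gamma_u=-\infty$, which we interpret as $T_u=-\infty$, so those sets are empty.

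The first step is to control $\max\{T_u/(\alpha+\lambda),1\}$ by $m$. Since $\delta<1$, $\Gamma\ge 1$, $\gamma_u\le 1$ and $\beta\ge1$, every term subtracted from $m$ in \eqref{T_u_m_gamma} is nonnegative. Hence $T_u/(\alpha+\lambda)\le m$, and since $m\ge 1$ we get $\max\{T_u/(\alpha+\lambda),1\}\le m$. Lemma~\ref{lemma_K_u(T)} then gives
\[
|K_{u,\alpha,\lambda}(T_u)|\le 2^{T_u/(\alpha+\lambda)}(A_{\alpha,\lambda}m+B_{\alpha,\lambda})^{|u|}.
\]

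The second step evaluates the exponential factor directly:
\[
2^{T_u/(\alpha+\lambda)}=2^m\,\delta\,\Gamma^{-1}\,\gamma_u^{1/(\alpha+\lambda+1/2)}\,\beta^{-|u|}.
\]
Multiplying by $(A_{\alpha,\lambda}m+B_{\alpha,\lambda})^{|u|}$ produces exactly the $u$-th summand of \eqref{Gamma(m)}, multiplied by $\delta 2^m/\Gamma$.

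The third step sums over $u\subseteq 1{:}s$. This gives
\[
|K_{m,\alpha,\lambda,\beta,\bm{\gamma},\delta}|\le \delta 2^m\,\Gamma^{-1}\bigl(\Gamma-1\bigr)\le \delta 2^m.
\]

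The only delicate point is the $u=\emptyset$ term. There Lemma~\ref{lemma_K_u(T)} reads $|K_{\emptyset}(T)|\le 2^{T/(\alpha+\lambda)}$, whereas the true set is just $\{\bszero\}$, of size $1$ when $T_\emptyset\ge 0$. Two things need checking for this term. First, the bound still holds: $T_\emptyset\ge0$ gives $2^{T_\emptyset/(\alpha+\lambda)}\ge 1=|K_\emptyset(T_\emptyset)|$. Second, the $\emptyset$-summand $\gamma_\emptyset^{1/(\alpha+\lambda+1/2)}$ is already included in the sum defining $\Gamma-1$. So no separate accounting is needed; the extra ``$1+$'' in $\Gamma$ supplies slack beyond what the argument uses. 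Verifying that the $\max\{\cdot,1\}\le m$ step and the empty-set convention are handled consistently is the main thing to get right; the rest is bookkeeping.
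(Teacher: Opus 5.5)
Your proposal is correct and follows the paper's proof: bound each $|K_{u,\alpha,\lambda}(T_u)|$ by Lemma~\ref{lemma_K_u(T)} using $T_u/(\alpha+\lambda)\le m$, substitute the definition of $T_u$, and sum over $u$ to obtain $\delta 2^m(\Gamma-1)/\Gamma\le\delta 2^m$. You also make explicit a few points the paper passes over silently, namely the cases $T_u<0$ and $\gamma_u=0$, the use of $m\ge 1$ to handle the $\max\{\cdot,1\}$ factor, and the $u=\emptyset$ term.
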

\begin{proof}
    According to Lemma \ref{lemma_K_u(T)} and $T_{u,m,\alpha,\lambda,\beta,\bm{\gamma},\delta}\le m$, we have 
    \begin{align*}
        |K_{m,\alpha,\lambda,\beta,\bm{\gamma},\delta}|&=\sum_{u\subseteq 1:s}K_{u,\alpha,\lambda}(T_{u,m,\alpha,\lambda,\beta,\bm{\gamma},\delta})\\
        &\le \sum_{u\subseteq 1:s}2^{T_{u,m,\alpha,\lambda,\beta,\bm{\gamma},\delta}/(\alpha+\lambda)}(A_{\alpha,\lambda}m + B_{\alpha,\lambda})^{|u|}\\
        &=\frac{\delta 2^m}{\Gamma_{\alpha,\lambda,\beta,\bm{\gamma}}(m)}\sum_{u\subseteq 1:s}\gamma_u^{\frac{1}{\alpha+\lambda+1/2}}\beta^{-|u|}(A_{\alpha,\lambda}m + B_{\alpha,\lambda})^{|u|}\\
        &\le \delta 2^m.\qedhere
    \end{align*}
\end{proof}

The next two lemmas provide upper bounds for the $\el^2$ and $\el^1$ sums of Walsh coefficients outside the anisotropic index set $K_{m,\alpha,\lambda,\beta,\bm{\gamma},\delta}$, respectively. As the proofs are primarily technical, we relegate them to the Appendix.

\begin{lemma}\label{lemma_lambda}
    For $\theta \in (0,1)$ and $f$ with $\Vert f\Vert_{s,\alpha,\lambda}<\infty$, we have
    \begin{equation}\label{Lambda_m}
     \Lambda_2:=\sum_{\bm{k}\in\mathbb{N}_0^s\setminus K_{m,\alpha,\lambda,\beta,\bm{\gamma},\delta}}|\widehat{f}(\bm{k})|^2\le \Upsilon_{\alpha,\lambda,\beta, \bm{\gamma},\delta,\theta}\frac{\Gamma_{\alpha,\lambda,\beta,\bm{\gamma}}(m)^{2\theta(\alpha+\lambda)}}{2^{2\theta(\alpha+\lambda)m}} \Vert f\Vert_{s,\alpha,\lambda}^2,
    \end{equation}
    where $\Gamma_{\alpha,\lambda,\beta,\bm{\gamma}}(m)$ is defined in (\ref{Gamma(m)}), and
    \begin{equation}\label{constant_Gamma}
    \Upsilon_{\alpha,\lambda,\beta, \bm{\gamma},\delta,\theta}=\delta^{-2\theta(\alpha+\lambda)}\sum_{u\subseteq 1:s}\gamma_u^{2-\frac{2\theta(\alpha+\lambda)}{\alpha+\lambda+1/2}}\beta^{2\theta(\alpha+\lambda)|u|} C_{\alpha,\lambda,\theta}^{|u|},
    \end{equation}
    with 
    \begin{equation}\label{constant_C}
        C_{\alpha,\lambda,\theta}=\frac{4^{-\alpha}}{\alpha!(4^{(\alpha+\lambda)(1-\theta)}-1)(4^{1-\theta}-1)^{\alpha}}+\sum_{\alpha'=0}^{\alpha}\frac{4^{-\alpha'+1}}{(\alpha')!(4^{1-\theta}-1)^{\alpha'}}.
    \end{equation}
\end{lemma}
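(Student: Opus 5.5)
The plan is to split the tail sum by ANOVA support, bound each piece with a dyadic decay estimate for Walsh coefficients, and use a Rankin-type (exponential weight) argument to trade the truncation level for the rate factor $\theta$. Since $\mathbb{N}_0^s$ is the disjoint union over $u\subseteq 1{:}s$ of $\{\bsk:\bm{s}(\bsk)=u\}$, we have
\[
\Lambda_2=\sum_{u\subseteq 1:s}\ \sum_{\bm{s}(\bsk)=u,\ \mu_{\alpha,\lambda}(\bsk)>T_u}|\widehat{f}(\bsk)|^2,
\]
where $T_u=T_{u,m,\alpha,\lambda,\beta,\bsgamma,\delta}$. By \eqref{wal_f_u}, these coefficients are exactly the Walsh coefficients of $f_u$.

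\textbf{Step 1: a dyadic-shell decay bound.} The key input is the Walsh-coefficient decay estimate from \cite{pan2026dimension}, built on \cite{dick2008walsh}, for functions with finite $\Vert\cdot\Vert_{u,\alpha,\lambda}$. Fix the digit positions $c_{j,1}>\dots>c_{j,\alpha+1}$ for each $j\in u$, and let the lower digits of $k_j$ range freely. I expect the $\ell^2$-sum of $|\widehat{f}(\bsk)|^2$ over such a block to be at most $\Vert f\Vert_{u,\alpha,\lambda}^2\,4^{-\mu_{\alpha,\lambda}(\bsk)}$, up to a constant factor per coordinate. The case $v=u$ of the sup in \eqref{f_u_a_l} handles coordinates with more than $\alpha$ digits. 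The cases $v\subsetneq u$ handle coordinates with $v_j\le\alpha$ digits, where the free digit count is smaller and a lower-order derivative $\alpha_j=v_j$ is used. Summing over the free lower digits is what produces the factors $4^{-\alpha'}/\alpha'!$ in $C_{\alpha,\lambda,\theta}$.

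\textbf{Step 2: the Rankin-type step.} On the tail region $\mu_{\alpha,\lambda}(\bsk)>T_u$ we have
\[
1\le 4^{\theta(\mu_{\alpha,\lambda}(\bsk)-T_u)}.
\]
Inserting this factor gives
\[
\sum_{\bm{s}(\bsk)=u,\ \mu>T_u}|\widehat{f}(\bsk)|^2\le 4^{-\theta T_u}\sum_{\bm{s}(\bsk)=u}4^{\theta\mu_{\alpha,\lambda}(\bsk)}|\widehat{f}(\bsk)|^2.
\]
By Step 1, the full sum on the right factorizes over $j\in u$ into one-dimensional sums.
\begin{itemize}
\item For $k_j$ with $\alpha'\le\alpha$ digits, the terms are $4^{-(1-\theta)(c_1+\dots+c_{\alpha'})}$ summed over $c_1>\dots>c_{\alpha'}\ge1$. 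This is bounded by $\frac{1}{\alpha'!}\bigl(\sum_{c\ge1}4^{-(1-\theta)c}\bigr)^{\alpha'}$, giving the factor $(4^{1-\theta}-1)^{-\alpha'}/\alpha'!$.
\item For $k_j$ with more than $\alpha$ digits, the top $\alpha$ digits give $(4^{1-\theta}-1)^{-\alpha}/\alpha!$. The $(\alpha+1)$-st digit, weighted by $\lambda$ and combined with the remaining free digits, gives the geometric factor $(4^{(\alpha+\lambda)(1-\theta)}-1)^{-1}$.
\end{itemize}
Together these yield $C_{\alpha,\lambda,\theta}^{|u|}\gamma_u^2\Vert f\Vert_{s,\alpha,\lambda}^2$, using $\Vert f\Vert_{u,\alpha,\lambda}=\gamma_u\Vert f\Vert_{s,\alpha,\lambda}$.

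\textbf{Step 3: substituting $T_u$.} Plugging in \eqref{T_u_m_gamma} gives
\[
4^{-\theta T_u}=\delta^{-2\theta(\alpha+\lambda)}\,\Gamma^{2\theta(\alpha+\lambda)}\,2^{-2\theta(\alpha+\lambda)m}\,\gamma_u^{-2\theta(\alpha+\lambda)/(\alpha+\lambda+1/2)}\,\beta^{2\theta(\alpha+\lambda)|u|}.
\]
Summing over $u$ reproduces $\Upsilon_{\alpha,\lambda,\beta,\bsgamma,\delta,\theta}$ exactly. The case $u=\emptyset$ contributes nothing, because $\bszero$ lies in the set whenever $T_\emptyset\ge0$. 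If $T_\emptyset<0$, the same bound still applies via the Rankin inequality, with $\mu(\bszero)=0$ and $|\mu(f)|\le\Vert f\Vert_{s,\alpha,\lambda}\gamma_\emptyset$, contributing the constant $C^0=1$.

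\textbf{Main obstacle.} The hard part is Step 1: the precise shell-wise $\ell^2$ decay bound in terms of fractional Vitali variation, including the mixed cases $v\subsetneq u$. The strategy is to import it from \cite[Section 4]{pan2026dimension}, since the paper says $K_{u,\alpha,\lambda}$ matches that construction. The remaining delicate point is bookkeeping the constants so that they match $C_{\alpha,\lambda,\theta}$ as stated.
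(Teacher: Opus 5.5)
Your proposal is correct and is essentially the paper's argument. You split $\Lambda_2$ by ANOVA support, bound each $u$-tail by $4^{-\theta T_u}C_{\alpha,\lambda,\theta}^{|u|}\Vert f\Vert_{u,\alpha,\lambda}^2$, and substitute \eqref{T_u_m_gamma} with $\Vert f\Vert_{u,\alpha,\lambda}=\gamma_u\Vert f\Vert_{s,\alpha,\lambda}$; the paper instead cites that per-$u$ tail bound directly as \cite[Lemma 5]{pan2026dimension}, whereas you rebuild it by Rankin's trick from the block bound \cite[Lemma 4]{pan2026dimension}, exactly as the paper proves its $\ell^1$ analogue in Lemma~\ref{lemma_L1_Ku}.

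Two small remarks on that rebuild. The $4^{1-\alpha'}$ factors in $C_{\alpha,\lambda,\theta}$ come from the exact per-coordinate factor $4^{1-|\kappa_j|}$ in that block bound, not from summing free lower digits. Also, your explicit handling of the $u=\emptyset$ term when $T_\emptyset<0$ is more careful than the paper's first equality.
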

\begin{proof}
    See Appendix \ref{appendix_lambda}.
\end{proof}

\begin{lemma}\label{lemma_psi}
Assume $\alpha+\lambda>1/2$.
    For $\eta\in (0,1-1/(2\alpha+2\lambda))$ and $f$ with $\Vert f\Vert_{s,\alpha,\lambda}< \infty$, we have
    \begin{equation}\label{Psi_m}
       \Lambda_1:=\sum_{\bm{k}\in\mathbb{N}_0^s\setminus K_{m,\alpha,\lambda,\beta,\bm{\gamma},\delta}}|\widehat{f}(\bm{k})|\le \Theta_{\alpha,\lambda,\beta,\bm{\gamma},\delta,\eta}\frac{\Gamma_{\alpha,\lambda,\beta,\bm{\gamma}}(m)^{\eta(\alpha+\lambda)}}{2^{\eta(\alpha+\lambda)m}}\Vert f\Vert_{s,\alpha,\lambda},
    \end{equation}
    where $\Gamma_{\alpha,\lambda,\beta,\bm{\gamma}}(m)$ is defined in (\ref{Gamma(m)}), and
    \begin{equation}\label{constant_Theta}
        \Theta_{\alpha,\lambda,\beta,\bm{\gamma},\delta,\eta}=\delta^{-\eta(\alpha+\lambda)}\sum_{u\subseteq 1:s}\gamma_{u}^{1-\frac{\eta(\alpha+\lambda)}{\alpha+\lambda+1/2}}\beta^{\eta(\alpha+\lambda)|u|} D_{\alpha,\lambda,\eta}^{|u|},
    \end{equation}
    with 
    \begin{equation}\label{constant_D}
        D_{\alpha,\lambda,\eta}=\frac{2^{-\alpha}}{\alpha!(2^{(\alpha+\lambda)(1-\eta)-1/2}-1)(2^{1-\eta}-1)^{\alpha}}+\sum_{\alpha'=0}^{\alpha}\frac{2^{-\alpha'+1}}{(\alpha')!(2^{1-\eta}-1)^{\alpha'}}.
    \end{equation}
\end{lemma}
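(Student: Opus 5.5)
We bound the $\ell^1$ tail set by set over the ANOVA decomposition, mirroring the argument for Lemma~\ref{lemma_lambda}. Since $\mathbb{N}_0^s\setminus K_{m,\alpha,\lambda,\beta,\bm{\gamma},\delta}$ is the disjoint union over $u\subseteq 1{:}s$ of $\{\bm{k}:\bm{s}(\bm{k})=u,\ \mu_{\alpha,\lambda}(\bm{k})>T_u\}$ (with $T_u:=T_{u,m,\alpha,\lambda,\beta,\bm{\gamma},\delta}$), we write $\Lambda_1=\sum_u \Lambda_{1,u}$. For each $u$, the Walsh coefficients of $f$ with support $u$ are those of $f_u$ by \eqref{wal_f_u}.

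The key input is the decay bound from \cite{pan2026dimension} (based on \cite{dick2008walsh}). It controls the $\ell^2$ mass of the Walsh coefficients of $f_u$ on the dyadic shells where the leading $\alpha+1$ nonzero digits $c_{1},\dots,c_{\alpha+1}$ of each $k_j$, $j\in u$, are fixed. Concretely, for such a shell $S$ one has
\[
\sum_{\bm{k}\in S}|\widehat{f}(\bm{k})|^2\lesssim 4^{-\mu_{\alpha,\lambda}(\bm{k})}\,\Vert f\Vert_{u,\alpha,\lambda}^2
\]
up to factors absorbed into the constants. This is the same ingredient that yields $C_{\alpha,\lambda,\theta}$ in Lemma~\ref{lemma_lambda}.

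We pass from $\ell^2$ to $\ell^1$ by Cauchy--Schwarz on each shell. A shell has at most $2^{c_{\alpha+1}-1}$ elements per coordinate, coming from the free lower digits when $v>\alpha$, and this is why the exponent becomes $\alpha+\lambda-1/2$ in $D_{\alpha,\lambda,\eta}$. The shell contribution is then of order $2^{-\mu_{\alpha,\lambda}}\cdot 2^{c_{\alpha+1}/2}\Vert f\Vert_{u,\alpha,\lambda}$.

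Next comes the Rankin trick. On the tail, $\mu_{\alpha,\lambda}(\bm{k})>T_u$, so we insert the factor $2^{\eta(\mu_{\alpha,\lambda}(\bm{k})-T_u)}\ge 1$. This gives
\[
\Lambda_{1,u}\le 2^{-\eta T_u}\Vert f\Vert_{u,\alpha,\lambda}\sum_{\text{shells}}2^{-(1-\eta)\mu_{\alpha,\lambda}}2^{c_{\alpha+1}/2}.
\]
The remaining sum factorizes over $j\in u$. Each one-dimensional sum over $c_1>\dots>c_{\alpha'}$ (for $v=\alpha'\le\alpha$), plus the sum over $v>\alpha$ involving $c_{\alpha+1}$, is a geometric-type series. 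It is bounded by $D_{\alpha,\lambda,\eta}$ by the standard estimate $\sum_{c_1>\dots>c_{a}\ge1}2^{-(1-\eta)\sum c_i}\le \frac{1}{a!(2^{1-\eta}-1)^a}$ (with the shift producing the $2^{-\alpha'+1}$ factors). The $c_{\alpha+1}$ series converges precisely when $(\alpha+\lambda)(1-\eta)>1/2$, which is the stated range of $\eta$. Getting these combinatorial constants to match \eqref{constant_D} exactly is the main bookkeeping obstacle; I would reuse the computation underlying $C_{\alpha,\lambda,\theta}$ with $4$ replaced by $2$ and the extra $2^{1/2}$ per coordinate.

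Finally, we substitute the value of $T_u$ from \eqref{T_u_m_gamma}:
\[
2^{-\eta T_u}=\delta^{-\eta(\alpha+\lambda)}\,\Gamma(m)^{\eta(\alpha+\lambda)}\,2^{-\eta(\alpha+\lambda)m}\,\gamma_u^{-\eta(\alpha+\lambda)/(\alpha+\lambda+1/2)}\,\beta^{\eta(\alpha+\lambda)|u|}.
\]
Combining this with $\Vert f\Vert_{u,\alpha,\lambda}=\gamma_u\Vert f\Vert_{s,\alpha,\lambda}$ from \eqref{gamma_u} and summing over $u$ yields exactly \eqref{Psi_m} with $\Theta_{\alpha,\lambda,\beta,\bm{\gamma},\delta,\eta}$ as in \eqref{constant_Theta}. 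The case $u=\emptyset$ needs separate handling: since $T_\emptyset\le m\cdot(\alpha+\lambda)$ and $\bm{0}$ has $\mu_{\alpha,\lambda}(\bm{0})=0$, we need $T_\emptyset\ge0$ for $\bm{0}$ to lie in the index set. Otherwise the term $|\mu(f)|=\gamma_\emptyset\Vert f\Vert_{s,\alpha,\lambda}$ is still covered, because $T_\emptyset<0$ makes $2^{-\eta T_\emptyset}>1$, consistent with the $D^0=1$ factor.
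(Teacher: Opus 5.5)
Your proposal is correct and follows essentially the same route as the paper: it upgrades the $\ell^2$ block bound of \cite[Lemma 4]{pan2026dimension} to an $\ell^1$ bound by Cauchy--Schwarz with $|B(\bm{k}_u,v)|\le\prod_{j\in v}2^{\lceil\kappa_j\rceil_{\alpha+1}-1}$ (Lemma \ref{lemma_partial_L1_sum}), then inserts the Rankin factor $2^{\eta(\mu_{\alpha,\lambda}(\bm{k})-T)}$ and sums the factorized geometric series to get $D_{\alpha,\lambda,\eta}$ (Lemma \ref{lemma_L1_Ku}), and finally substitutes $T_{u,m,\bm{\gamma}}$. One bookkeeping correction: the $2^{-\alpha'+1}$ factors come from the $2^{|u|}$ prefactor together with the $2^{-1}$ per fixed nonzero digit in the block bound, not from the index shift (which is only used for the $|\kappa|=\alpha+1$ term); your explicit handling of $u=\emptyset$ is also correct and covers a case the paper's proof leaves implicit.
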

\begin{proof}
    See Appendix \ref{appendix_psi}.
\end{proof}
\begin{remark}
    Lemma \ref{lemma_psi} implies that $\sum_{\bm{k}\in\mathbb{N}_0^s}|\widehat{f}(\bm{k})|<\infty$ whenever $\Vert f\Vert_{s,\alpha,\lambda}<\infty$ with $\alpha+\lambda>1/2$, thereby fulfilling the assumption of Lemma \ref{lemma_error_g}.
\end{remark}

\subsection{Error analysis: the main result}
By the orthonormality of the Walsh basis, the squared $L^2$ error admits the following decomposition
\begin{align}
    \Vert A^{E,\epsilon}_m(f)-f \Vert_{L^2}^2=\sum_{\bm{k}\in H^{E,\epsilon}_N}|\widehat{f}^{E,\epsilon}_{N,\med}(\bm{k})-\widehat{f}(\bm{k})|^2 + \sum_{\bm{k}\in \mathbb{N}_0^s\setminus H^{E,\epsilon}_N}|\widehat{f}(\bm{k})|^2\label{decomp_1}.
\end{align}
The first term is controlled by establishing probabilistic bounds on the coefficient estimation errors. For the second term, a decomposition analogous to \cite[Inequality ($\clubsuit$)]{pan2025universal} ensures that dominant Walsh coefficients are included in the index set $H^{E,\epsilon}_N$ with high probability. These estimates lead to our main result, formulated in Theorem \ref{thm_main} below. The complete proof is provided in Appendix \ref{appendix_main}.

\begin{theorem}\label{thm_main}
    Let $\alpha\in\mathbb{N}_0$, $\lambda\in(0,1]$, $\beta\ge 1$, $\delta\in(0,1/10)$,  $m\in\mathbb{N}$, and $N=2^m$. Assume $\alpha+\lambda>1/2$. For $f$ with $\Vert f\Vert_{s,\alpha,\lambda}<\infty$, let $\bm{\gamma}=\{\gamma_u, u\subseteq 1{:}s\}$ denote its associated relative variations \eqref{gamma_u}. Assume that the index set $K$ contains $K_{m,\alpha,\lambda,\beta,\bm{\gamma},\delta}$ defined by \eqref{K_gamma}, and that the precision $E$ satisfies (\ref{condition_E}). Furthermore, given $\delta'\in(0,1)$, let $R$ be an odd integer chosen sufficiently large so that
    \begin{equation}\label{condition_R}
        \frac{|K|+N}{2}(8\delta(1-2\delta))^{R/2} + \frac{N}{2}(20\delta(1-5\delta))^{R/2}\le \delta'.
    \end{equation}
     Then for any $\theta\in(0,1)$, with probability at least $1-\delta'$, Algorithm~\ref{alg:ep_median} satisfies
         \begin{align*}
        \Vert A^{E,\epsilon}_m(f)-f \Vert_{L^2}^2\le 
        C_{\alpha,\lambda,\beta,\bm{\gamma},\delta,\theta}\frac{ \left(\Gamma_{\alpha,\lambda,\beta,\bm{\gamma}}(m)\right)^{2\theta(\alpha+\lambda)}  }{N^{2\theta(\alpha+\lambda)+\theta-1} }\Vert f\Vert_{s,\alpha,\lambda}^2+8N\epsilon_1(\Vert f\Vert_{\infty}+\epsilon_1).
    \end{align*}
    Here, $\Gamma_{\alpha,\lambda,\beta,\bm{\gamma}}(m)$ and $\epsilon_1$ are defined in (\ref{Gamma(m)}) and (\ref{constant_ep1}), respectively, and
    \begin{align*}
        C_{\alpha,\lambda,\beta,\bm{\gamma},\delta,\theta}= &\frac{6}{\delta(1-\delta)}
        \Upsilon_{\alpha,\lambda,\beta, \bm{\gamma},\delta,\theta}
        +\frac{1}{\delta^2}\max\left\{64\Upsilon_{\alpha,\lambda,\beta, \bm{\gamma},\delta,\theta},\frac{1}{4}\Theta_{\alpha,\lambda,\beta,\bm{\gamma},\delta,\eta}^2\right\},
    \end{align*}   
    where $\Upsilon_{\alpha,\lambda,\beta, \bm{\gamma},\delta,\theta}$ and $\Theta_{\alpha,\lambda,\beta,\bm{\gamma},\delta,\eta}$ are defined in (\ref{constant_Gamma}) and (\ref{constant_Theta}), respectively, and $\eta=(1-1/(2\alpha+2\lambda))\theta$. 
\end{theorem}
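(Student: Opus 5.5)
We would follow the template of the median lattice analysis in \cite{pan2025universal}, transferred to the Walsh/digital-net setting through Lemma~\ref{lemma_error_g}. Write $K_*:=K_{m,\alpha,\lambda,\beta,\bm{\gamma},\delta}\subseteq K$. The basic probabilistic input is a single-replicate bound. For fixed $\bm{k}$ we have $\mathbb{E}[Z(\el)]=2^{-m}$ for $\el\neq\bm{0}$ under the complete random design. Moreover, the $W(\el)$ are pairwise orthogonal under the random shift, so that $\mathbb{E}[(\widehat{f}_{N,r}(\bm{k})-\widehat{f}(\bm{k}))^2]=\sum_{\el\ne\bm 0}\Pr(Z(\el)=1)|\widehat f(\bm k\oplus\el)|^2\le 2^{-m}\sum_{\el}|\widehat f(\el)|^2$.

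This crude bound is too weak, so we split the aliasing sum according to whether $\bm{k}\oplus\el\in K_*$ or not. For the part with $\bm{k}\oplus\el\in K_*$, we use that $|K_*|\le\delta 2^m$ (Lemma~\ref{card}) and a union bound: with probability at least $1-\delta$, no $\el=\bm{k}\oplus\bm{h}$ with $\bm{h}\in K_*\setminus\{\bm k\}$ satisfies $Z(\el)=1$. On this event the error is $\sum_{\bm{k}\oplus\el\notin K_*}Z(\el)W(\el)\widehat f(\bm k\oplus\el)$. Markov's inequality on its second moment gives, with probability at least $1-2\delta$ (say), an error bounded by $\sqrt{\Lambda_2/(\delta N)}$. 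The $\ell^1$ bound $\Lambda_1$ of Lemma~\ref{lemma_psi} serves as a uniform deterministic fallback, since each error is at most $\Lambda_1$ plus the in-$K_*$ aliasing. Together with Theorem~\ref{thm_dis}, each $\widehat f^{E,\epsilon}_{N,r}(\bm k)$ is then within $\tau:=\sqrt{\Lambda_2/(\delta N)}+\epsilon_1$ of $\widehat f(\bm k)$ with probability $p\ge 1-2\delta$. The same holds for the absolute values, by the reverse triangle inequality.

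Next comes the median amplification. If each replicate is good with probability at least $1-q$, the median over $R$ replicates is bad with probability at most $\binom{R}{(R+1)/2}q^{(R+1)/2}\le\frac12(4q(1-q))^{R/2}$. With $q=2\delta$ this produces the $(8\delta(1-2\delta))^{R/2}$ term. A union bound over $\bm k\in K$, together with $N$ additional events, gives the first summand of \eqref{condition_R}. A second, looser threshold event with $q=5\delta$ handles the $N$ selected indices; there the tail is compared with $\Lambda_1$-type bounds, giving the $\frac N2(20\delta(1-5\delta))^{R/2}$ term. Hence, with probability at least $1-\delta'$, every median estimate on $K$ and every median absolute value is accurate to within $2\tau$ or a comparable tolerance.

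Finally, on this good event we bound the two terms of \eqref{decomp_1}. The first term is at most $N(2\tau)^2$, which yields $\frac{C}{\delta}\Lambda_2$ plus the $\epsilon_1$ contributions. Expanding the squares rather than using $(a+b)^2$ bounds gives the stated form $8N\epsilon_1(\|f\|_\infty+\epsilon_1)$, since $|\widehat f(\bm k)|\le\|f\|_\infty$. For the second term we use the analogue of inequality ($\clubsuit$) in \cite{pan2025universal}. Any $\bm k\in K_*\setminus H_N$ was displaced by some $\bm k'\in H_N\setminus K_*$ with larger median absolute value. Since $|K_*|\le\delta N<N$, these can be paired injectively, and then $|\widehat f(\bm k)|\le|\widehat f(\bm k')|+4\tau$. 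This gives $\sum_{\bm k\notin H_N}|\widehat f(\bm k)|^2\le \Lambda_2+2\sum_{\bm k'\notin K_*}|\widehat f(\bm k')|^2+O(N\tau^2)$. Where $\tau$ must instead be controlled uniformly, we use $\tau\lesssim\Lambda_1/\delta$ and pay $N\Lambda_1^2$; this produces the $\frac14\Theta^2$ alternative in the maximum.

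Plugging in Lemmas~\ref{lemma_lambda} and~\ref{lemma_psi} with $\eta=(1-1/(2\alpha+2\lambda))\theta$ makes $N\Lambda_1^2$ and $N\Lambda_2$ both of order $\Gamma^{2\theta(\alpha+\lambda)}N^{1-2\theta(\alpha+\lambda)}$ up to the exponent bookkeeping. We expect the rate $N^{-(2\theta(\alpha+\lambda)+\theta-1)}$ to come out of this comparison. The main obstacle is the displacement/selection step: organizing the probabilistic events so that one union bound with the two stated $R$-exponents covers both the coefficient accuracy and the ranking, and tracking the constants to match $C_{\alpha,\lambda,\beta,\bm\gamma,\delta,\theta}$ exactly.
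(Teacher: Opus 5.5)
Your core argument is correct and takes a genuinely different, simpler route than the paper's. Your closing paragraph, however, misreads what that argument yields.

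\textbf{How the paper argues.} It compares $H^{E,\epsilon}_N$ with $H_N'$, the $N$ largest true coefficients in $K$. For $\bm{k}'\in H_N'$ it proves only the multiplicative statement $\operatorname{med}_r|\widehat f_{N,r}(\bm{k}')|^2>|\widehat f(\bm{k}')|^2/2$. This uses fourth moments and Cantelli's inequality (Lemmas~\ref{2_4_matrix}, \ref{lemma_sigma}, \ref{lemma_with_condition}), and holds only above the threshold $G_{\alpha,\lambda,\beta,\bm{\gamma},\delta,\theta}\Gamma(m)^{\theta(\alpha+\lambda)}N^{-\theta(\alpha+\lambda+1/2)}$. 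Each ingredient of the stated bound comes from this step:
\begin{itemize}
\item $\Theta$ enters through the $\ell^1$ quantity $t(\bm{k})\le\Lambda_1/(\delta N)$ in the variance bound.
\item The per-replicate failure probability $5\delta$ produces the factor $(20\delta(1-5\delta))^{R/2}$.
\item Coefficients below the threshold cost $N$ times its square, which is exactly the extra factor $N^{1-\theta}$ in the rate.
\end{itemize}

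\textbf{What your additive argument gives.} It bypasses all of this. Work on the event that, for every $\bm{k}\in K$, a majority of replicates satisfy $|\widehat f_{N,r}(\bm{k})-\widehat f(\bm{k})|\le\tau_0:=\sqrt{\Lambda_2/(\delta N)}$. By Lemma~\ref{lemma_median_perserve}, $\operatorname{med}_r\widehat f^{E,\epsilon}_{N,r}(\bm{k})$ and $\operatorname{med}_r|\widehat f^{E,\epsilon}_{N,r}(\bm{k})|$ then lie within $\tau_0+\epsilon_1$ of $\widehat f(\bm{k})$ and $|\widehat f(\bm{k})|$, respectively. Hence term~I is at most $2\Lambda_2/\delta+2N\epsilon_1^2$. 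Your injective pairing of $K_*\setminus H_N$ into $H_N\setminus K_*$ has at most $\delta N$ pairs, each with $|\widehat f(\bm{k})|\le|\widehat f(\bm{k}')|+2\tau_0+2\epsilon_1$. It bounds the truncation error by $19\Lambda_2+16\delta N\epsilon_1^2$. The total is at most $(2/\delta+19)\Lambda_2+4N\epsilon_1^2$. By Lemma~\ref{lemma_lambda} this is of order $\Upsilon\,\Gamma(m)^{2\theta(\alpha+\lambda)}N^{-2\theta(\alpha+\lambda)}\Vert f\Vert_{s,\alpha,\lambda}^2$, which is a better rate than stated. It implies the theorem because $N^{1-\theta}\ge 1$, $C_{\alpha,\lambda,\beta,\bm{\gamma},\delta,\theta}\ge 64\Upsilon/\delta^2\ge(2/\delta+19)\Upsilon$ for $\delta<1/10$, and $4N\epsilon_1^2\le 8N\epsilon_1(\Vert f\Vert_\infty+\epsilon_1)$. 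With this route, Lemma~\ref{lemma_psi} is needed only for absolute convergence in Lemma~\ref{lemma_error_g}. For this statement the fourth-moment machinery buys nothing and costs the factor $N^{1-\theta}$.

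\textbf{Two corrections.}

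First, delete the $\Lambda_1$ ``fallback'', the $q=5\delta$ event and the final bookkeeping. Nothing in your argument uses them, and they would not give the claimed rate. With $\eta=(1-1/(2\alpha+2\lambda))\theta$, Lemma~\ref{lemma_psi} only yields $N\Lambda_1^2=O(N^{1+\theta-2\theta(\alpha+\lambda)})$, and Lemma~\ref{lemma_lambda} gives $N\Lambda_2=O(N^{1-2\theta(\alpha+\lambda)})$. Both exceed $N^{-(2\theta(\alpha+\lambda)+\theta-1)}=N^{1-\theta-2\theta(\alpha+\lambda)}$, by factors $N^{2\theta}$ and $N^{\theta}$ respectively. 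Your argument never produces $N\Lambda_2$; it produces $N\tau_0^2=\Lambda_2/\delta$. You also do not need to reproduce $C_{\alpha,\lambda,\beta,\bm{\gamma},\delta,\theta}$ exactly, only to stay below it.

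Second, do not attach ``$N$ additional events'' to the selected indices. $H^{E,\epsilon}_N$ is random and depends on the same replicates, so a union bound over it is not valid. Use one event per $\bm{k}\in K$: at least $(R+1)/2$ replicates are $\tau_0$-accurate. The binomial-tail argument behind Lemma~\ref{lemma_median_trick} bounds its failure probability by $\frac12(8\delta(1-2\delta))^{R/2}$. This single event controls the signed median and the median of absolute values at once. The total cost is $\frac{|K|}{2}(8\delta(1-2\delta))^{R/2}\le\delta'$. This is also more careful than Theorem~\ref{thm_term1}, which takes the union bound over $H^{E,\epsilon}_N$ itself.
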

\begin{proof}
    See Appendix \ref{appendix_main}.
\end{proof}

\begin{remark}\label{remark_R}
Given a failure probability tolerance $\delta'\in(0,1)$, the condition (\ref{condition_R}) is satisfied if
\begin{equation*}
    (|K|+N)(8\delta(1-2\delta))^{R/2}\le \delta'\quad\text{and}\quad N(20\delta(1-5\delta))^{R/2}\le \delta'.
\end{equation*}
A sufficient condition for these requirements to hold is
\begin{equation}\label{condition_sim_R}
    R\ge 2\max\left\{\frac{\log(\delta')-\log(|K|+N)}{\log(8\delta(1-2\delta))},\frac{\log(\delta')-\log(N)}{\log(20\delta(1-5\delta))} \right\}.
\end{equation}
In particular, if $|K|$ satisfies the growth bound
\begin{equation}\label{K_growth}
    |K|\le N(C\log N)^{s+1}
\end{equation}
for some constant $C>0$ independent of $s$, then $\log(|K|+N) = \mathcal{O}(\log N + s\log\log N)$. Consequently, for a fixed failure probability, the required number of repetitions $R$ scales as $\mathcal{O}(\log N + s\log\log N)$ under the assumption (\ref{K_growth}).

\end{remark}

\subsection{Tractability analysis}\label{sec_trac}
In this subsection, we analyze the tractability of the idealized algorithm~\ref{alg:idealized_median}, corresponding to Theorem~\ref{thm_main} with $\epsilon_1=0$ (no finite‑precision error). By Theorem~\ref{thm_main}, the error bound in terms of $N$ is independent of $s$ provided that the quantities $\Gamma_{\alpha,\lambda,\beta,\bm{\gamma}}(m)$, $\Upsilon_{\alpha,\lambda,\beta, \bm{\gamma},\delta,\theta}$, and $\Theta_{\alpha,\lambda,\beta,\bm{\gamma},\delta,\eta}$ are bounded independently of $s$. We assume the tractability condition \cite[condition (25)]{pan2026dimension}, which requires the existence of positive weights $\{\gamma_j,j\in\mathbb{N}\}$ satisfying
\begin{equation}\label{condition_gamma}
    \gamma_u\le\prod_{j\in u}\gamma_j\quad\forall u\subseteq \mathbb{N}\quad\text{and}\quad C_{\infty}:=\sum_{j=1}^{\infty}\gamma_j^{\frac{1}{\alpha+\lambda+1/2}}<\infty.
\end{equation}
Under this condition, an argument analogous to \cite[Remark 3]{pan2026dimension} shows that
\begin{equation*}
    \Upsilon_{\alpha,\lambda,\beta, \bm{\gamma},\delta,\theta}\le \delta^{-2\theta(\alpha+\lambda)}\exp(C_{\infty}\beta^{2(\alpha+\lambda)} C_{\alpha,\lambda,\theta})
\end{equation*}
with $C_{\alpha,\lambda,\theta}$ as defined in (\ref{constant_C}), and 
\begin{equation*}
    \Gamma_{\alpha,\lambda,\beta,\bm{\gamma}}(m)\le C'_{\rho}N^{\rho},\quad\forall\rho>0
\end{equation*}
with $C'_\rho$ independent of $s$. It remains to show that $\Theta_{\alpha,\lambda,\beta,\bm{\gamma},\delta,\eta}$ can be bounded independently of $s$. Since $\gamma_u\in[0,1],\beta\ge 1$, and $\eta\in (0,1-1/(2\alpha+2\lambda))$, according to (\ref{constant_Theta}), we have
\begin{align*}
  \delta^{\eta(\alpha+\lambda)}\Theta_{\alpha,\lambda,\beta,\bm{\gamma},\delta,\eta}&=\sum_{u\subseteq 1:s}\gamma_{u}^{1-\frac{\eta(\alpha+\lambda)}{\alpha+\lambda+1/2}}\beta^{\eta(\alpha+\lambda)|u|}  D_{\alpha,\lambda,\eta}^{|u|}\\
  &\le \sum_{u\subseteq 1{:}s}\gamma_{u}^{\frac{1}{\alpha+\lambda+1/2}}\beta^{(\alpha+\lambda-1/2)|u|} D_{\alpha,\lambda,\eta}^{|u|}\\
  &\le \prod_{j=1}^{s}\left(1+\gamma_j^{\frac{1}{\alpha+\lambda+1/2}}\beta^{\alpha+\lambda-1/2}  D_{\alpha,\lambda,\eta}\right)\\
  &\le \exp\left(C_{\infty}\beta^{\alpha+\lambda-1/2} D_{\alpha,\lambda,\eta}\right),
\end{align*}
where $D_{\alpha,\lambda,\eta}$ is defined in (\ref{constant_D}). Therefore, under the condition \eqref{condition_gamma}, the  probabilistic $L^2$-convergence rate can be made arbitrarily close to $\mathcal{O}(N^{-\alpha-\lambda})$ with an implied constant independent of $s$.

Finally, Remark \ref{remark_R} shows that the total number of evaluations $M = RN$ satisfies $M = \mathcal{O}((\log N + s\log\log N)N)$ under the assumption (\ref{K_growth}). Consequently, if the tractability condition (\ref{condition_gamma}) also holds, the convergence rate in terms of $M$ is 
$$\mathcal{O}((\log M/M)^{\alpha+\lambda-\eta}+(s\log\log M/M)^{\alpha+\lambda-\eta})=O(s^{\alpha+\lambda}M^{-\alpha-\lambda+\eta'}),$$
where the implied constant is independent of $s$ and $\eta'>\eta>0$ is arbitrarily small.

\section{Implementation of median digital-net $L^2$ approximation}\label{sec_4}
In this section, we discuss the implementation details of Algorithm~\ref{alg:ep_median}. We first propose two strategies for constructing the index set $K$:  one requiring known smoothness parameters $\alpha$ and $\lambda$, and the other covering all parameters. We then reduce the computational cost of Walsh coefficient estimation via the fast Walsh--Hadamard transform (FWHT). We finally develop an acceleration strategy based on Gray code ordering, tailored to the universal index set that covers all parameters.

\subsection{Construction of the index set $K$}
We present two approaches for constructing the index set $K$ required by Algorithm~\ref{alg:ep_median}.

\subsubsection{Universal index set when smoothness parameters are known}
We first consider the case where the smoothness parameters $\alpha$ and $\lambda$ are given, aiming to construct a universal index set $K^{\operatorname{univ}}_{\alpha,\lambda}$ independent of the weights $\bm{\gamma}$. 

For $\tau\ge 0$, let 
\begin{equation}\label{K_uinv^alpha}
    K^{\operatorname{univ}}_{\alpha,\lambda}(\tau)=\bigcup_{u\subseteq 1{:}s}K_{u,\alpha,\lambda}(T^{\operatorname{univ}}_{u,m,\alpha,\lambda}(\tau)),
\end{equation}
where
\begin{equation}\label{T_tau}
    T^{\operatorname{univ}}_{u,m,\alpha,\lambda}(\tau)=(\alpha+\lambda)\left[m - \log_2\left(1 + B_{\alpha,\lambda}^{|u|} \right)-\tau\sum_{j\in u}\log_2(j) \right].
\end{equation}
The following theorem shows that $K^{\operatorname{univ}}_{\alpha,\lambda}(\tau)$ satisfies the growth assumption (\ref{K_growth}), justifying the discussion in Section~\ref{sec_trac}. Furthermore, Theorem~\ref{thm_main} is applicable with $K=K^{\operatorname{univ}}_{\alpha,\lambda}(\tau)$ under suitable assumptions on the weights $\gamma_u$.

\begin{theorem}\label{thm_card_alpha}
    For $\tau\geq 0$ and $ K^{\operatorname{univ}}_{\alpha,\lambda}(\tau)$ defined in \eqref{K_uinv^alpha},
        \begin{align}\label{eqn:Kunivcard1}
        | K^{\operatorname{univ}}_{\alpha,\lambda}(\tau)|\le
            2^m\prod_{j=1}^{s}\left(1+j^{-\tau}\left(1+m\right)\right).
    \end{align}
    Furthermore, if for some $\Gamma\geq 1$, the weights $\gamma_u$ satisfy 
    \begin{equation}\label{restrict_gamma}
    \gamma_{u}\le \Gamma^{|u|} \prod_{j\in u}j^{-\tau(\alpha+\lambda+1/2)} \ \forall u\subseteq 1{:}s,
\end{equation}
     then
    $ K_{m,\alpha,\lambda,\beta,\bm{\gamma},\delta}\subseteq  K^{\operatorname{univ}}_{\alpha,\lambda}(\tau)$ with $\delta\in (0,1/2)$ and $\beta=\Gamma^{\frac{1}{\alpha+\lambda+1/2}}\max(1,B_{\alpha,\lambda})$.
\end{theorem}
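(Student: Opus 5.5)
The plan is to prove the two claims separately. Both rest on termwise comparisons over $u\subseteq 1{:}s$, together with the fact that $K_{u,\alpha,\lambda}(T)$ is nondecreasing in $T$.

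\emph{Cardinality bound \eqref{eqn:Kunivcard1}.} I would write $|K^{\operatorname{univ}}_{\alpha,\lambda}(\tau)|\le\sum_{u\subseteq 1{:}s}|K_{u,\alpha,\lambda}(T^{\operatorname{univ}}_{u,m,\alpha,\lambda}(\tau))|$. Terms with $T^{\operatorname{univ}}_{u,m,\alpha,\lambda}(\tau)<0$ vanish by convention, so only $0\le T:=T^{\operatorname{univ}}_{u,m,\alpha,\lambda}(\tau)$ matters. For these, Lemma~\ref{lemma_K_u(T)} gives
\[
|K_{u,\alpha,\lambda}(T)|\le 2^{T/(\alpha+\lambda)}\bigl(A_{\alpha,\lambda}\max\{T/(\alpha+\lambda),1\}+B_{\alpha,\lambda}\bigr)^{|u|}.
\]
Here $2^{T/(\alpha+\lambda)}=2^m(1+B_{\alpha,\lambda}^{|u|})^{-1}\prod_{j\in u}j^{-\tau}$. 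Since $T\le(\alpha+\lambda)m$ and $m\ge1$, the maximum is at most $m$.

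The key observation is $A_{\alpha,\lambda}\le B_{\alpha,\lambda}$ when $\alpha\ge1$, because the $\alpha'=\alpha$ summand of $B_{\alpha,\lambda}$ is exactly $A_{\alpha,\lambda}$. Hence $A_{\alpha,\lambda}m+B_{\alpha,\lambda}\le B_{\alpha,\lambda}(1+m)$, and so
\[
(A_{\alpha,\lambda}m+B_{\alpha,\lambda})^{|u|}\le B_{\alpha,\lambda}^{|u|}(1+m)^{|u|}\le(1+B_{\alpha,\lambda}^{|u|})(1+m)^{|u|}.
\]
When $\alpha=0$ we have $A=1$ and $B=0$, so $Am+B=m\le 1+m$ and $1+B^{|u|}\ge1$; the same inequality holds, reading $0^0=1$ for $u=\emptyset$.

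Combining, $|K_{u,\alpha,\lambda}(T)|\le 2^m\prod_{j\in u}j^{-\tau}(1+m)$. Summing over $u$ and expanding $\prod_{j=1}^s(1+j^{-\tau}(1+m))=\sum_u\prod_{j\in u}j^{-\tau}(1+m)$ yields \eqref{eqn:Kunivcard1}.

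\emph{Inclusion.} By monotonicity of $K_{u,\alpha,\lambda}(T)$ in $T$, it suffices to show $T_{u,m,\alpha,\lambda,\beta,\bm{\gamma},\delta}\le T^{\operatorname{univ}}_{u,m,\alpha,\lambda}(\tau)$ for every $u$. Dividing by $\alpha+\lambda$ and cancelling $m$, this reduces to
\[
\log_2\delta-\log_2\Gamma_{\alpha,\lambda,\beta,\bm{\gamma}}(m)+\tfrac{\log_2\gamma_u}{\alpha+\lambda+1/2}-|u|\log_2\beta\le-\log_2(1+B_{\alpha,\lambda}^{|u|})-\tau\sum_{j\in u}\log_2 j .
\]
I would bound each term on the left:
\begin{itemize}
\item $\log_2\delta<-1$, since $\delta<1/2$;
\item $-\log_2\Gamma_{\alpha,\lambda,\beta,\bm{\gamma}}(m)\le0$, since $\Gamma_{\alpha,\lambda,\beta,\bm{\gamma}}(m)\ge1$;
\item by \eqref{restrict_gamma}, $\frac{\log_2\gamma_u}{\alpha+\lambda+1/2}\le\frac{|u|\log_2\Gamma}{\alpha+\lambda+1/2}-\tau\sum_{j\in u}\log_2 j$;
\item the choice of $\beta$ gives $|u|\log_2\beta=\frac{|u|\log_2\Gamma}{\alpha+\lambda+1/2}+|u|\log_2\max(1,B_{\alpha,\lambda})$.
\end{itemize}
The $\Gamma$-terms cancel, so the left side is at most $-1-|u|\log_2\max(1,B_{\alpha,\lambda})-\tau\sum_{j\in u}\log_2 j$. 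It therefore remains to verify $1+B_{\alpha,\lambda}^{|u|}\le 2\max(1,B_{\alpha,\lambda})^{|u|}$, which is immediate.

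The only delicate point is the cardinality estimate, specifically absorbing the factor $A_{\alpha,\lambda}m+B_{\alpha,\lambda}$ into $(1+B_{\alpha,\lambda}^{|u|})(1+m)$. That is exactly why the normalization $1+B_{\alpha,\lambda}^{|u|}$ appears in \eqref{T_tau}, and it hinges on the inequality $A_{\alpha,\lambda}\le B_{\alpha,\lambda}$ together with separate handling of the case $\alpha=0$.
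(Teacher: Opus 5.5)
Your proposal is correct and follows the paper's proof essentially step for step. For the cardinality bound, both apply Lemma~\ref{lemma_K_u(T)} with $\max\{T/(\alpha+\lambda),1\}\le m$, use $A_{\alpha,\lambda}\le B_{\alpha,\lambda}$ for $\alpha\ge1$, and treat $\alpha=0$ separately; for the inclusion, both compare $T_{u,m,\alpha,\lambda,\beta,\bm{\gamma},\delta}\le T^{\operatorname{univ}}_{u,m,\alpha,\lambda}(\tau)$ termwise using $\Gamma_{\alpha,\lambda,\beta,\bm{\gamma}}(m)\ge1$, \eqref{restrict_gamma}, and $\delta(1+B_{\alpha,\lambda}^{|u|})\le\max(1,B_{\alpha,\lambda})^{|u|}$ (your handling of the $\alpha=0$, $u=\emptyset$ term via an inequality is, if anything, slightly more careful than the paper's stated equality).
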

\begin{proof}
    According to Lemma~\ref{lemma_K_u(T)},
    \begin{align*}
    | K^{\operatorname{univ}}_{\alpha,\lambda}(\tau)|
    &=\sum_{u\subseteq 1:s}|K_{u,\alpha,\lambda}(T^{\operatorname{univ}}_{u,m,\alpha,\lambda}(\tau))|\\
    &\le \sum_{u\subseteq 1:s}2^{T^{\operatorname{univ}}_{u,m,\alpha,\lambda}(\tau)/(\alpha+\lambda)}(A_{\alpha,\lambda}m + B_{\alpha,\lambda})^{|u|}\\
    &=2^m\sum_{u\subseteq 1:s}\frac{(A_{\alpha,\lambda}m + B_{\alpha,\lambda})^{|u|}}{1+B_{\alpha,\lambda}^{|u|}}\prod_{j\in u}j^{-\tau}.
    \end{align*}
    If $\alpha = 0$, then $A_{0,\lambda}=1,B_{0,\lambda}=0$, and
    \begin{equation*}
        \sum_{u\subseteq 1:s}\frac{(A_{\alpha,\lambda}m + B_{\alpha,\lambda})^{|u|}}{1+B_{\alpha,\lambda}^{|u|}}\prod_{j\in u}j^{-\tau}=\sum_{u\subseteq 1:s}m^{|u|}\prod_{j\in u}j^{-\tau}=\prod_{j=1}^{s}\left(1+j^{-\tau}m\right).
    \end{equation*}
    If $\alpha>0$, then $B_{\alpha,\lambda}\ge A_{\alpha,\lambda}>0$, and
    \begin{align*}
        \sum_{u\subseteq 1:s}\frac{(A_{\alpha,\lambda}m + B_{\alpha,\lambda})^{|u|}}{1+B_{\alpha,\lambda}^{|u|}}\prod_{j\in u}j^{-\tau}&< \sum_{u\subseteq 1:s}\frac{(B_{\alpha,\lambda}m + B_{\alpha,\lambda})^{|u|}}{B_{\alpha,\lambda}^{|u|}}\prod_{j\in u}j^{-\tau}\\
        &= \prod_{j=1}^{s}\left(1+j^{-\tau}\left(1+m\right)\right).
    \end{align*}
     The first assertion holds in both cases.

    Next, using (\ref{T_u_m_gamma}) and $\Gamma_{\alpha,\lambda,\beta,\bm{\gamma}}(m)\geq 1$, 
    \begin{align*}
  \frac{T_{u,m,\alpha,\lambda,\beta,\bm{\gamma},\delta}}{\alpha+\lambda} &\leq m+\log_2\delta +\frac{\log_2(\gamma_u)}{\alpha+\lambda+1/2}-|u|\log_2 \beta\\
    &\leq m+\log_2\delta-\tau\sum_{j\in u}\log_2(j)+|u|\left(\frac{\log_2\Gamma}{\alpha+\lambda+1/2}-\log_2 \beta\right)\\
    &\le \frac{T^{\operatorname{univ}}_{u,m,\alpha,\lambda}(\tau)}{\alpha+\lambda}+\log_2\left(\delta(1+B^{|u|}_{\alpha,\lambda})\right)+|u|\left(\frac{\log_2\Gamma}{\alpha+\lambda+1/2}-\log_2 \beta\right).
    \end{align*}
Hence, when $\delta\in (0,1/2)$ and $\beta=\Gamma^{\frac{1}{\alpha+\lambda+1/2}}\max(1,B_{\alpha,\lambda})$, we have 
\begin{equation}\label{T_u_m}
    T_{u,m,\alpha,\lambda,\beta,\bm{\gamma},\delta}\le T^{\operatorname{univ}}_{u,m,\alpha,\lambda}(\tau).
\end{equation}
The relation $ K_{m,\alpha,\lambda,\beta,\bm{\gamma},\delta}\subseteq  K^{\operatorname{univ}}_{\alpha,\lambda}(\tau)$ follows from \eqref{K_u}.
\end{proof}
\begin{remark}
    The parameter $\tau$ controls the trade-off between weight restrictions and index set size. When $\tau=0$,  the condition \eqref{restrict_gamma} is automatically satisfied since $\gamma_u \le 1$ by definition, representing the unrestricted case. When $\tau > 0$, it imposes decay on $\gamma_u$ and shrinks the index set. In particular, choosing $\tau > 1$ ensures that $|K^{\operatorname{univ}}_{\alpha,\lambda}(\tau)|$ is bounded independently of the dimension $s$.
\end{remark}

\subsubsection{Universal index set covering all parameters}\label{subsec:univ}
We next construct a universal index set $K^{\operatorname{univ}}$ such that $K_{m,\alpha,\lambda,\beta,\bm{\gamma},\delta}\subseteq K^{\operatorname{univ}}$ for all admissible smoothness parameters $\alpha,\lambda$ and weights $\bsgamma$, thereby removing the need for parameter specification.

For $\tau\ge 0$, let $K^{\operatorname{univ}}_{0}(\tau)=\emptyset$ and, when $\alpha\in \natu$,
\begin{equation}\label{K_univ^al}
     K^{\operatorname{univ}}_{\alpha}(\tau)=\bigcup_{u\subseteq 1{:}s}K_{u,\alpha-1,1}(T^{\operatorname{univ}}_{u,m,\alpha}(\tau)),
\end{equation}
where
\begin{equation*}
    T^{\operatorname{univ}}_{u,m,\alpha}(\tau)=\alpha\left[m -\log_2\left(1+B_{\alpha-1,0}^{|u|} \right)-\tau\sum_{j\in u}\log_2(j)\right].
\end{equation*}
Here, we use the convention $B_{0,0}=0$. The universal index set is defined by 
\begin{equation}\label{K_univ}
    K^{\operatorname{univ}}(\tau)=\bigcup_{\alpha\in\mathbb{N}}K^{\operatorname{univ}}_{\alpha}(\tau).
\end{equation}

The next theorem is the counterpart of Theorem~\ref{thm_card_alpha} for $K^{\operatorname{univ}}(\tau)$, establishing that the choice $K=K^{\operatorname{univ}}(\tau)$ satisfies the growth assumption (\ref{K_growth}) and, subject to conditions on $\gamma_u$, the assumption $K_{m,\alpha,\lambda,\beta,\bm{\gamma},\delta}\subseteq K$ used in Theorem~\ref{thm_main}.

\begin{theorem}\label{thm_card_univ}
    For $\tau\geq 0$ and $K^{\operatorname{univ}}(\tau)$ defined in \eqref{K_univ}, there exists an absolute constant $C^{\operatorname{univ}}>0$ such that for $m\geq 1$,
    \begin{equation}\label{eqn:Kunivcard2}
    |K^{\operatorname{univ}}(\tau)|\le C^{\operatorname{univ}}m2^m\prod_{j=1}^s\left(1+C^{\operatorname{univ}}j^{-\tau}(1+m)\right).    
    \end{equation}
     Furthermore, if the weights $\gamma_u$ satisfy (\ref{restrict_gamma}) for some $\alpha\in \natu_0$ and $\lambda\in (0,1]$, then  $K_{m,\alpha,\lambda,\beta,\bm{\gamma},\delta}\subseteq K^{\operatorname{univ}}(\tau)$ with $\delta\in (0,1/2)$ and $\beta=\Gamma^{\frac{1}{\alpha+\lambda+1/2}}\max(1,B_{\alpha,\lambda})$.
\end{theorem}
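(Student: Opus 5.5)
The plan has two independent parts. For the inclusion I reduce to Theorem~\ref{thm_card_alpha} through the monotonicity of $\mu_{\alpha,\lambda}$ in $\lambda$. For the cardinality bound I apply Lemma~\ref{lemma_K_u(T)} to each piece $K^{\operatorname{univ}}_{\alpha}(\tau)$ and show that only $\mathcal{O}(m)$ values of $\alpha$ contribute anything beyond the index $\bszero$.

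\emph{Inclusion.} Fix $\alpha\in\natu_0$ and $\lambda\in(0,1]$ with \eqref{restrict_gamma}. Theorem~\ref{thm_card_alpha} already gives $K_{m,\alpha,\lambda,\beta,\bsgamma,\delta}\subseteq K^{\operatorname{univ}}_{\alpha,\lambda}(\tau)$, so it suffices to show $K^{\operatorname{univ}}_{\alpha,\lambda}(\tau)\subseteq K^{\operatorname{univ}}_{\alpha+1}(\tau)$. Concretely, I will show $K_{u,\alpha,\lambda}(T^{\operatorname{univ}}_{u,m,\alpha,\lambda}(\tau))\subseteq K_{u,\alpha,1}(T^{\operatorname{univ}}_{u,m,\alpha+1}(\tau))$ for every $u$.

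For a single coordinate $k=\sum_{j=1}^v 2^{c_j-1}$ with $v>\alpha$, we have $\mu_{\alpha,1}(k)-\mu_{\alpha,\lambda}(k)=(1-\lambda)c_{\alpha+1}$. Since $c_{\alpha+1}<c_j$ for $j\le\alpha$, it follows that $(\alpha+\lambda)c_{\alpha+1}\le \mu_{\alpha,\lambda}(k)$. Hence
\[
\mu_{\alpha,1}(k)\le \mu_{\alpha,\lambda}(k)\,\frac{\alpha+1}{\alpha+\lambda},
\]
and the same bound holds trivially when $v\le\alpha$. Summing over $j\in u$, the inclusion reduces to
\[
\frac{\alpha+1}{\alpha+\lambda}\,T^{\operatorname{univ}}_{u,m,\alpha,\lambda}(\tau)\le T^{\operatorname{univ}}_{u,m,\alpha+1}(\tau).
\]
This is equivalent to $B_{\alpha,\lambda}\ge B_{\alpha,0}$, which holds termwise because $2^{1/(\alpha+\lambda)}\le 2^{1/\alpha}$. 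For $\alpha=0$ both constants vanish by the convention $B_{0,0}=0$. If the left-hand threshold is negative, the set on the left is empty, so there is nothing to prove.

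\emph{Cardinality.} For $u=\emptyset$ the only index is $\bszero$. For $u\ne\emptyset$, Lemma~\ref{lemma_K_u(T)} with parameters $(\alpha-1,1)$ and $T=T^{\operatorname{univ}}_{u,m,\alpha}(\tau)\le \alpha m$ bounds $|K_{u,\alpha-1,1}(T)|$ by
\[
2^m\prod_{j\in u}j^{-\tau}\,\frac{(A_{\alpha-1,1}m+B_{\alpha-1,1})^{|u|}}{1+B_{\alpha-1,0}^{|u|}}.
\]
I will then check two comparisons with an absolute constant $c$. First, $A_{\alpha-1,1}\le c\,B_{\alpha-1,0}$: compare $A_{\alpha-1,1}$ with the $\alpha'=\alpha-1$ term of $B_{\alpha-1,0}$; the ratio is $\bigl((2^{1/(\alpha-1)}-1)/(2^{1/\alpha}-1)\bigr)^{\alpha-1}$, which is at most $(\alpha/(\alpha-1))^{\alpha-1}$ up to constants, hence at most about $e$. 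Second, $B_{\alpha-1,1}\le c\,B_{\alpha-1,0}$, by the same termwise comparison. The case $\alpha=1$ is handled directly, since $A_{0,1}=1$ and $B_{0,\cdot}=0$. With these two facts, each $\alpha$ contributes at most $2^m\prod_{j=1}^s(1+c\,j^{-\tau}(1+m))$, exactly as in the proof of Theorem~\ref{thm_card_alpha}.

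\emph{Main obstacle: the union over infinitely many $\alpha$.} I must show that $T^{\operatorname{univ}}_{u,m,\alpha}(\tau)<0$ for every nonempty $u$ once $\alpha>Cm$. This requires $B_{\alpha-1,0}>2^m$ for such $\alpha$, which I will obtain from an exponential lower bound on $B_{\alpha-1,0}$. Since $2^{1/(\alpha-1)}-1\le \ln 2/(\alpha-1)$ up to a constant factor, we get
\[
B_{\alpha-1,0}\ge \sum_{\alpha'\le \alpha-1}\frac{((\alpha-1)/\ln 2)^{\alpha'}}{\alpha'!}\ge e^{c(\alpha-1)}
\]
for an absolute $c>0$; the last step uses that the truncated exponential series retains a constant fraction of its mass because $1/\ln2>1$. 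Consequently, beyond $\{\bszero\}$ only $\alpha\le C m$ contribute. Summing the per-$\alpha$ bound over these $\mathcal{O}(m)$ values of $\alpha$ and absorbing $\{\bszero\}$ into the constant gives \eqref{eqn:Kunivcard2} with $C^{\operatorname{univ}}$ absolute.
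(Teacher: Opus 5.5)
Your argument is correct in substance. For the inclusion it takes a different and somewhat sharper route than the paper.

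\textbf{Inclusion.} The paper proves $K_{m,\alpha,\lambda,\beta,\bsgamma,\delta}\subseteq K^{\operatorname{univ}}_{\alpha}(\tau)\cup K^{\operatorname{univ}}_{\alpha+1}(\tau)$ in Lemma~\ref{K^alpha}. It writes $\mu_{\alpha,\lambda}=\lambda\mu_{\alpha,1}+(1-\lambda)\mu_{\alpha-1,1}$ and deduces $T^{\operatorname{univ}}_{u,m,\alpha,\lambda}(\tau)\le \lambda T^{\operatorname{univ}}_{u,m,\alpha+1}(\tau)+(1-\lambda)T^{\operatorname{univ}}_{u,m,\alpha}(\tau)$ from the chain $B_{\alpha-1,0}\le B_{\alpha,0}\le B_{\alpha,\lambda}$. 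It then concludes that at least one of the two constraints must hold, and treats $\alpha=0$ separately. Your pointwise bound $\mu_{\alpha,1}\le \frac{\alpha+1}{\alpha+\lambda}\mu_{\alpha,\lambda}$ uses that $c_{\alpha+1}$ is the smallest of the leading $\alpha+1$ positions. It needs only $B_{\alpha,\lambda}\ge B_{\alpha,0}$ and covers $\alpha=0$ without a separate case. It also places $K^{\operatorname{univ}}_{\alpha,\lambda}(\tau)$ inside the single piece $K^{\operatorname{univ}}_{\alpha+1}(\tau)$, which is slightly stronger than the paper's union.

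\textbf{Cardinality.} Your structure is the paper's: apply Lemma~\ref{lemma_K_u(T)} for each $\alpha$, then discard large $\alpha$ using the growth of $B_{\alpha-1,0}$. Your termwise comparison of $A_{\alpha-1,1}$ and $B_{\alpha-1,1}$ with $B_{\alpha-1,0}$ gives an explicit absolute constant. It is more elementary than the paper's $(e^y-1)/(e^x-1)$ limsup argument.

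\textbf{One step to repair.} Since $2^x-1\ge x\ln 2$, you have $(2^{1/n}-1)^{-1}\le n/\ln 2$ for $n=\alpha-1$. So the first inequality in your display, $B_{n,0}\ge\sum_{k\le n}(n/\ln 2)^k/k!$, is reversed as written, and the appeal to $1/\ln 2>1$ cannot be used. What your comparison actually needs is $(2^{1/n}-1)^{-1}\ge n$. This holds for all $n\ge 1$ because $(1+1/n)^n\ge 2$. Then $B_{n,0}\ge\sum_{k=1}^n n^k/k!\ge e^n/2-1$, since the Poisson$(n)$ law has median $n$.

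With this repair the argument closes cleanly. $B_{\alpha-1,0}>2^m$ as soon as $\alpha\ge m+3$, so $T^{\operatorname{univ}}_{u,m,\alpha}(\tau)<0$ for every nonempty $u$. Hence $K^{\operatorname{univ}}_\alpha(\tau)=\{\bszero\}$ for such $\alpha$, and only $\alpha\le m+2$ contribute. This even improves the paper's cutoff $\lfloor(\alpha_0+6)m\rfloor$.

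Also, $B_{1,0}=1$, so a bound of the form $e^{c(\alpha-1)}$ can only hold beyond some threshold. That is harmless here, since you only use it for large $\alpha$.
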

\begin{proof}
See Appendix~\ref{appendix_thm_card_univ}.  
\end{proof}

\subsection{Fast Estimation of Walsh Coefficients}
In this subsection, we accelerate the Walsh coefficient computation step \eqref{coe_in_alg2} in Algorithm~\ref{alg:ep_median} via FWHT. We first ignore the rounding error term $\epsilon_{\bsk,r}$ and consider an error-free counterpart
\begin{equation*}
    \widehat{f}^{E}_{N,r}(\bm{k})=\frac{1}{N}\sum_{i=0}^{N-1}f(\bm{x}_{i,r}^E)\wal{\bm{k}}(\bm{x}_{i,r}^E),
\end{equation*}
where $\bm{x}_{i,r}^E$ are the digital net points constructed in (\ref{E_digit}).

For each fixed $r$, a naive computation of all coefficients $\{\widehat{f}^{E}_{N,r}(\bm{k})\}_{\bm{k} \in K}$ would require $\mathcal{O}(|K|N)$ operations. However, these estimators actually take at most $2N$ distinct values. To illustrate this, we define the digit truncation operator
\begin{equation*}
    \trE (k)=(k_1,k_2,\ldots,k_E)^\top \quad\text{for}\quad k=\sum_{\ell=1}^\infty k_\ell 2^{\ell-1}.
\end{equation*}
From the construction \eqref{E_digit}, we have 
\begin{equation*}
    \wal{\bm{k}}(\bm{x}_{i,r}^E)=(-1)^{\sum_{j=1}^{s}\Vec{k}_j^\top \Vec{x}^E_{ij,r}}=(-1)^{\sum_{j=1}^{s}\trE(k_j)^\top C^E_{j,r}\Vec{i}}(-1)^{ \sum_{j=1}^{s}\trE(k_j)^\top\Vec{D}^E_{j,r}}.
\end{equation*}
Consequently,
\begin{equation*}
    \widehat{f}^{E}_{N,r}(\bm{k})=\frac{1}{N}(-1)^{\sum_{j=1}^{s}\trE(k_j)^\top\Vec{D}^E_{j,r}} \sum_{i=0}^{N-1}f(\bm{x}_{i,r}^E)(-1)^{(\sum_{j=1}^{s}\trE(k_j)^\top C^E_{j,r})\Vec{i}},
\end{equation*}
which means that $\widehat{f}^{E}_{N,r}(\bm{k})$ is determined by the values of $\sum_{j=1}^{s}\trE(k_j)^\top C^E_{j,r}$ and $\sum_{j=1}^{s}\trE(k_j)^\top\Vec{D}^E_{j,r}$. Since these terms belong to $\{0,1\}^{1\times m}$ and $\{0,1\}$, respectively, under modulo 2 arithmetic, we have
\begin{equation*}
    \{\widehat{f}^{E}_{N,r}(\bm{k}):\bm{k}\in K\}\subseteq B:=\left\{ \pm\frac{1}{N}\sum_{i=0}^{N-1}f(\bm{x}_{i,r}^E)(-1)^{\Vec{\ell}^\top \Vec{i}}:\ell=0,1,\ldots,N-1 \right\}.
\end{equation*}
Notably, the set $B$ has cardinality at most $2N$, and its elements can be computed simultaneously using FWHT, which leads to leads to Algorithm \ref{alg:accelerate}:

\begin{Algorithm}\label{alg:accelerate}
For given $m\in\mathbb{N}$, finite index set $K\subseteq \mathbb{N}_0^s$, precision $E\in\mathbb{N}$, generating matrices $C^E_{1,r},\ldots,C^E_{s,r}\in\{0,1\}^{E\times m}$, generating vectors $\vec{D}^E_{1,r},\ldots,\vec{D}^E_{s,r}\in \{0,1\}^{E}$, and the stored function evaluations $\{f(\bm{x}_{i,r}^E)\}_{i=0}^{N-1}$, perform the following steps:
\begin{enumerate}
    \item Apply the fast Walsh--Hadamard transform to compute
    \begin{equation}\label{eqn:fdigit}
    \widehat{f}_{\text{digit}}(\ell)= \frac{1}{N}\sum_{i=0}^{N-1}f(\bm{x}_{i,r}^E)(-1)^{\Vec{\ell}^\top \Vec{i}},\qquad \ell=0,1,\ldots,N-1.        
    \end{equation}
    
    \item For each $\bm{k}\in K$, compute
\begin{align}
    \sum_{j=1}^{s}\trE(k_j)^\top C^E_{j,r}\mod{2}&=\vec{\ell}^\top \quad\text{for}\quad \ell\in\{0,1,\ldots,N-1\},\label{comp_kC}\\
    \sum_{j=1}^{s}\trE(k_j)^\top\Vec{D}^E_{j,r}\mod{2}&= a\quad\text{for}\quad a\in\{0,1\}\label{comp_kD}
\end{align}
and set
\begin{equation}\label{eqn:copy}
 \widehat{f}^{E}_{N,r}(\bm{k})=(-1)^a \widehat{f}_{\text{digit}}(\ell).   
\end{equation}

    \item Return the values $\{\widehat{f}^{E}_{N,r}(\bm{k}) : \bm{k}\in K\}$.
\end{enumerate}
\end{Algorithm}

The total computational cost of Algorithm~\ref{alg:accelerate} comprises two components. First, applying FWHT requires $\mathcal{O}(N\log N)$ operations. Second, for each $\bm{k}\in K$, determining the index $\ell$ and the sign $(-1)^a$ using modulo 2 arithmetic requires $\mathcal{O}(sEm)$ operations. Consequently, the cost of processing the entire set $K$ is $\mathcal{O}(|K|sEm)$. Combining these two parts and substituting $m = \log_2 N$, the overall computational complexity of Algorithm~\ref{alg:accelerate} is $\mathcal{O}(N\log N + |K|sE\log N)$, significantly reducing the $\mathcal{O}(|K|N)$ operations required by the naive approach.

When rounding errors are present, our analysis remains the same except that \eqref{eqn:fdigit} yields $\widehat{f}_{\text{digit}}(\ell)$ plus a rounding error $\epsilon_\ell$. This error (or its negative) constitutes the error term $\epsilon_{\bsk,r}$ as the contaminated $\widehat{f}_{\text{digit}}(\ell)$ is copied into $\widehat{f}^{E}_{N,r}(\bm{k})$ in \eqref{eqn:copy}.

\subsection{Gray code ordering for $K^{\operatorname{univ}}(\tau)$}\label{subsubsec_ac}

The universal set $K^{\operatorname{univ}}(\tau)$ introduced in Section~\ref{subsec:univ}  has a structure that allows further acceleration of Walsh coefficient estimation. By partitioning the set into subsets and constructing Gray codes \cite{walsh2003generating} for each, with elements ordered so that consecutive $\bsk$ differ in only a few components,  the computations in \eqref{comp_kC} and \eqref{comp_kD} can be carried out via incremental updates.
 
We first consider index sets of the form
\begin{equation*}
    K_{u,\alpha-1,1}(T) = \left\{\bm{k}\in \mathbb{N}_0^s:\bm{s}(\bm{k})=u,\sum_{j\in u}\mu_{\alpha-1,1}(k_j)\le T \right\}.
\end{equation*}
For non-empty $u\subseteq 1{:}s$, we can view the elements of $K_{u,\alpha-1,1}(T)$ as elements of $\mathbb{N}^{|u|}$. Specifically, for $\bm{k}=(k_1,\ldots,k_{|u|})\in K_{u,\alpha-1,1}(T)$, we have the bijection
\[
\bm{k}\longleftrightarrow(\bm{w}_1,\bm{v}_1;\ldots;\bm{w}_{|u|},\bm{v}_{|u|}),
\]
where, for each $j=1,\ldots,|u|$, the pair $(\bm w_j,\bm v_j)$ is uniquely determined by the binary expansion
\[
k_j=\sum_{\ell\in\mathbb N}\kappa_{j,\ell}2^{\ell-1},\qquad \kappa_{j,\ell}\in\mathbb \{0,1\},
\]
as follows. Let
\[
z_j:=\min\!\Bigl(\alpha,\sum_{\ell\in\mathbb N}\kappa_{j,\ell}\Bigr),
\]
and let
\[
w_{j,1}>\cdots>w_{j,z_j}\ge1
\]
be the positions of the first $z_j$ nonzero binary digits of $k_j$. Setting
\[
w_{j,z_j+1}=\cdots=w_{j,\alpha}=0,
\]
we define
\[
\bm w_j=(w_{j,1},\ldots,w_{j,\alpha})\in\mathbb N_0^\alpha.
\]
Further, when $w_{j,z_j}>1$, let
\[
\bm v_j=(\kappa_{j,1},\ldots,\kappa_{j,w_{j,z_j}-1})\in\{0,1\}^{\,w_{j,z_j}-1}.
\]
When $w_{j,z_j}=1$, we set $\bm v_j$ to be the null vector $\{0,1\}^0$.

Further denoting
\begin{equation}\label{R_w_eq}
R_j = \mu_{\alpha-1,1}(k_j)=\sum_{\ell = 1}^{z_j}w_{j,\ell}\in \mathbb{N}, 
\end{equation}
we then have $\bm{k}\in K_{u,\alpha-1,1}(T)$ if and only if 
\begin{equation}\label{restrction_w}
    0\le \sum_{j=1}^{|u|}(R_j-1)\le T-|u|.
\end{equation}

We now order all elements in $K_{u,\alpha-1,1}(T)$ as follows.
\begin{enumerate}

\item \textbf{First level:} The base tuples $(R_1, \dots, R_{|u|})$ satisfying \eqref{restrction_w} are arranged in an arbitrary order, for example, the lexicographical order.

\item \textbf{Second level:} For each fixed tuple $(R_1,\ldots,R_{|u|})$, consider all extended solutions $\bm w_j=(w_{j,1},\ldots,w_{j,\alpha})\in\mathbb N_0^\alpha$ of \eqref{R_w_eq}, ordered so that $(R_j,0,\ldots,0)$ appears first, while $(R_j-1,1,0,\ldots,0)$ appears last whenever \eqref{R_w_eq} admits at least two solutions; all remaining admissible solutions may be arranged arbitrarily, for instance in lexicographical order. The tuples $(\bm w_1,\ldots,\bm w_{|u|})$ are then ordered lexicographically with respect to this prescribed ordering of each component $\bm w_j$.

\item \textbf{Third level:} For each fixed tuple $(\bm w_1,\ldots,\bm w_{|u|})$, each block $\bm v_j$ is ordered as follows: if $z_j=\alpha$, then $\bm v_j\in \{0,1\}^{\,w_{j,\alpha}-1}$ is ordered according to the standard binary Gray code $i\mapsto i\oplus\lfloor i/2\rfloor$, $i=0,1,\ldots,2^{\,w_{j,\alpha}-1}-1$, where each codeword is read with the least significant bit as the first coordinate; if $z_j<\alpha$, then $\bm v_j$ is the unique zero vector $\{0,1\}^{\,w_{j,z_j}-1}$. The tuples $(\bm v_1,\ldots,\bm v_{|u|})$ are then ordered lexicographically with respect to the prescribed ordering of each component $\bm v_j$.

\end{enumerate}

\begin{proposition}\label{prop_gray_code}
    Let $(\bm{w}_j^i; \bm{v}_j^i)_{j=1}^{|u|}$ for $i \in \{1,2\}$ be two consecutive elements in $K_{u,\alpha-1,1}(T)$ under the proposed ordering. Then
    \begin{enumerate}
        \item $(\bm{w}_1^1, \dots, \bm{w}_{|u|}^1)$ and $(\bm{w}_1^2, \dots, \bm{w}_{|u|}^2)$ differ in at most $2|u|+\alpha-1$ positions.
    
        \item For each $1\le j \le |u|$, $\bm{v}^1_j$ and $\bm{v}^2_j$ differ in at most one bit (under zero-padding to equal length).

    \end{enumerate}
\end{proposition}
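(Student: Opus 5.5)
The plan is a case analysis on the level of the ordering at which the two consecutive elements first differ. The ordering is a nested lexicographic order. The first level ranges over admissible tuples $(R_1,\ldots,R_{|u|})$. Within each such tuple, the second level ranges over tuples $(\bm w_1,\ldots,\bm w_{|u|})$. Within each $\bm w$-tuple, the third level ranges over $(\bm v_1,\ldots,\bm v_{|u|})$. For two consecutive elements there are three possibilities: (a) they share $(\bm w_j)_j$ and only the $\bm v$-tuple advances; (b) they share $(R_j)_j$ but the $\bm w$-tuple advances; (c) the $R$-tuple changes. In cases (b) and (c), the first element carries the last $\bm v$-tuple for its $\bm w$-tuple, and the second carries the first $\bm v$-tuple for the next one. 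I will treat the three cases separately and bound the number of changed entries of $\bm w$ and changed bits of each $\bm v_j$.

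\emph{Case (a).} All $\bm w_j$ coincide, so part 1 is trivial. The $\bm v$-tuples are consecutive in lexicographic order of per-component Gray-code lists. Suppose the last (fastest) component not yet at the end of its list is $j_0$. Then $\bm v_{j_0}$ advances to its Gray successor, which changes exactly one bit. Each later component wraps from its last Gray codeword back to its first one, $0$. The standard reflected code $i\mapsto i\oplus\lfloor i/2\rfloor$ has last word $2^{L-1}$, i.e.\ a single $1$ in the most significant position, so the wrap also changes one bit. Components before $j_0$ are unchanged. If $z_j<\alpha$ the list has one element, so nothing moves. Hence every $\bm v_j$ changes in at most one bit.

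\emph{Cases (b),(c), part 2.} For each $j$, the first element has $\bm v_j^1$ equal either to the zero vector (when $z_j<\alpha$) or to the last Gray word, of length $w^1_{j,z_j}-1$, with a single $1$ in its top position. The second element has $\bm v_j^2$ equal to the first word, which is zero of some length. After zero-padding, these differ in at most one bit. This is exactly why the Gray code is started at $0$ and ends at a weight-one word.

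\emph{Cases (b),(c), part 1.} This is the main obstacle, where the special placement of $(R_j,0,\ldots,0)$ first and $(R_j-1,1,0,\ldots,0)$ last within each $R_j$ matters. In case (b), the $\bm w$-tuple advances lexicographically over the prescribed per-$j$ orders. For some index $j_0$, $\bm w_{j_0}$ moves to its next admissible solution, which can change up to $\alpha$ entries. Every later $\bm w_j$ resets from its last element $(R_j-1,1,0,\ldots)$ to its first $(R_j,0,\ldots)$, which changes $2$ entries each. The total is at most $\alpha+2(|u|-1)=2|u|+\alpha-2$. In case (c), every $\bm w_j$ goes from the last solution for $R^1_j$ to the first solution for $R^2_j$, namely $(R^2_j,0,\ldots,0)$. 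The old vector is $(R^1_j-1,1,0,\ldots)$, or $(R^1_j,0,\ldots)$ if only one solution exists. Either way, each $\bm w_j$ changes in at most $2$ positions. Under the arbitrary first-level order, all $|u|$ components may change, giving at most $2|u|$ positions, which is within $2|u|+\alpha-1$ since $\alpha\ge1$. I will double-check the degenerate cases where only one solution of \eqref{R_w_eq} exists and $z_j<\alpha$. In those cases the padding zeros of $\bm w_j$ are part of the vector, and the count is still at most $2$. Taking the maximum over cases gives the claimed bound $2|u|+\alpha-1$.
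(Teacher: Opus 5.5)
Your proposal is correct and follows essentially the same argument as the paper. Both proofs do a case analysis on the ordering level at which two consecutive elements first differ. In each $\bm w$-reset, a later $\bm w_j$ goes from $(R_j-1,1,0,\ldots,0)$ or $(R_j,0,\ldots,0)$ to $(R_j,0,\ldots,0)$. In each $\bm v$-reset, the terminal reflected Gray codeword $2^{L-1}$, i.e.\ $(0,\ldots,0,1)$, wraps to $\bm 0$. This yields the bounds $2|u|+\alpha-2$ and $2|u|$ for part 1, and at most one differing bit per $\bm v_j$ for part 2.
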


\begin{proof}
See Appendix \ref{appendix_gray_code}.
\end{proof}

\begin{remark}
In the special case $\alpha=1$, each $\bm{w}_j = R_j$ reduces to a scalar, and a combinatorial Gray code can be applied to further minimize the differences between consecutive sequences $(\bm{w}_1, \dots, \bm{w}_{|u|})$. Specifically, \eqref{restrction_w} defines an $n$-composition of the interval $[0, T-|u|]$ studied in \cite{vajnovszki2011restricted}. By \cite[Corollary 2]{vajnovszki2011restricted}, these tuples can be ordered so that any two consecutive tuples differ in at most two positions, and by $+1$ or $-1$ in those positions.
\end{remark}

Proposition \ref{prop_gray_code} ensures that consecutive tuples in $K_{u,\alpha-1,1}(T)$ differ in at most $3|u|+\alpha-1$ positions. Consequently, evaluating \eqref{comp_kC} and \eqref{comp_kD} for adjacent elements requires only $\mathcal{O}((\alpha+|u|) m)$ incremental operations. For a fixed $u$, the specific ordering implies that the initial element satisfies $\bm{w}_j = (R_j, 0, \dots, 0)$ and $\bm{v}_j = \bm{0}$, so the initial evaluation cost is $\mathcal{O}(|u|m)$.  The overall complexity for evaluating \eqref{comp_kC} and \eqref{comp_kD} for all $\bsk \in K_{u,\alpha-1,1}(T)$ is thus
$\mathcal{O}((\alpha+|u|)m |K_{u,\alpha-1,1}(T)|)$. 

Next, we connect the above analysis to the universal set $K^{\operatorname{univ}}(\tau)$. Recall that $K^{\operatorname{univ}}(\tau)=\bigcup_{\alpha\in\mathbb{N}}K^{\operatorname{univ}}_{\alpha}(\tau)$, where $K^{\operatorname{univ}}_\alpha(\tau)$ from \eqref{K_univ^al} is a union over $u\subseteq 1{:}s$ of sets of the form $K_{u,\alpha-1,1}(T)$. Summing over all non-empty subsets $u \subseteq \{1:s\}$, the total complexity for $K^{\operatorname{univ}}_\alpha(\tau)$ is therefore bounded by $\mathcal{O}((\alpha+s)m|K^{\operatorname{univ}}_\alpha(\tau)|)$.

Finally, we choose a truncation parameter $\alpha_0 \in \mathbb{N}$ and split $K^{\operatorname{univ}}(\tau)$ into two parts. For $\bm{k} \in K^{\operatorname{univ}}(\tau) \setminus \bigcup_{\alpha \le \alpha_0} K^{\operatorname{univ}}_\alpha(\tau)$, we evaluate \eqref{comp_kC} and \eqref{comp_kD} directly. For those in $K^{\operatorname{univ}}_{\leq\alpha_0}(\tau)=\bigcup_{\alpha \le \alpha_0} K^{\operatorname{univ}}_\alpha(\tau)$, we apply the preceding ordering strategy, reducing the cost from $\mathcal{O}\left(sEm \left| K^{\operatorname{univ}}_{\leq\alpha_0}(\tau) \right|\right)$ to $\mathcal{O}\left((\alpha_0+s)m \left| K^{\operatorname{univ}}_{\leq\alpha_0}(\tau) \right|\right)$.   This reduction is often significant, since $E$ is typically set large to satisfy condition (\ref{condition_E}), while setting $\alpha_0=2$ already captures most indices in $K^{\operatorname{univ}}(\tau)$ (see Section~\ref{subsec:Ksize}).

\section{Numerical experiments}\label{sec_5}
In this section, we validate the proposed algorithm through numerical experiments, where $N=2^m$ denotes the number of digital net points. We first analyze the cardinality of the pre-selected index set $K$ under two different selection strategies. Afterwards, we assess the approximation performance for non-periodic functions to confirm the theoretical error bounds.

Throughout this section, we consider the universal index sets $K^{\operatorname{univ}}_{\alpha,\lambda}(\tau)$, $K^{\operatorname{univ}}_{\alpha}(\tau)$,
and
$K^{\operatorname{univ}}(\tau)$,
introduced in
\eqref{K_uinv^alpha},
\eqref{K_univ^al},
and
\eqref{K_univ},
respectively.

\subsection{Comparison of the index sets}\label{subsec:Ksize}

\begin{figure}[htbp]
  \centering
  \includegraphics[width=\linewidth]{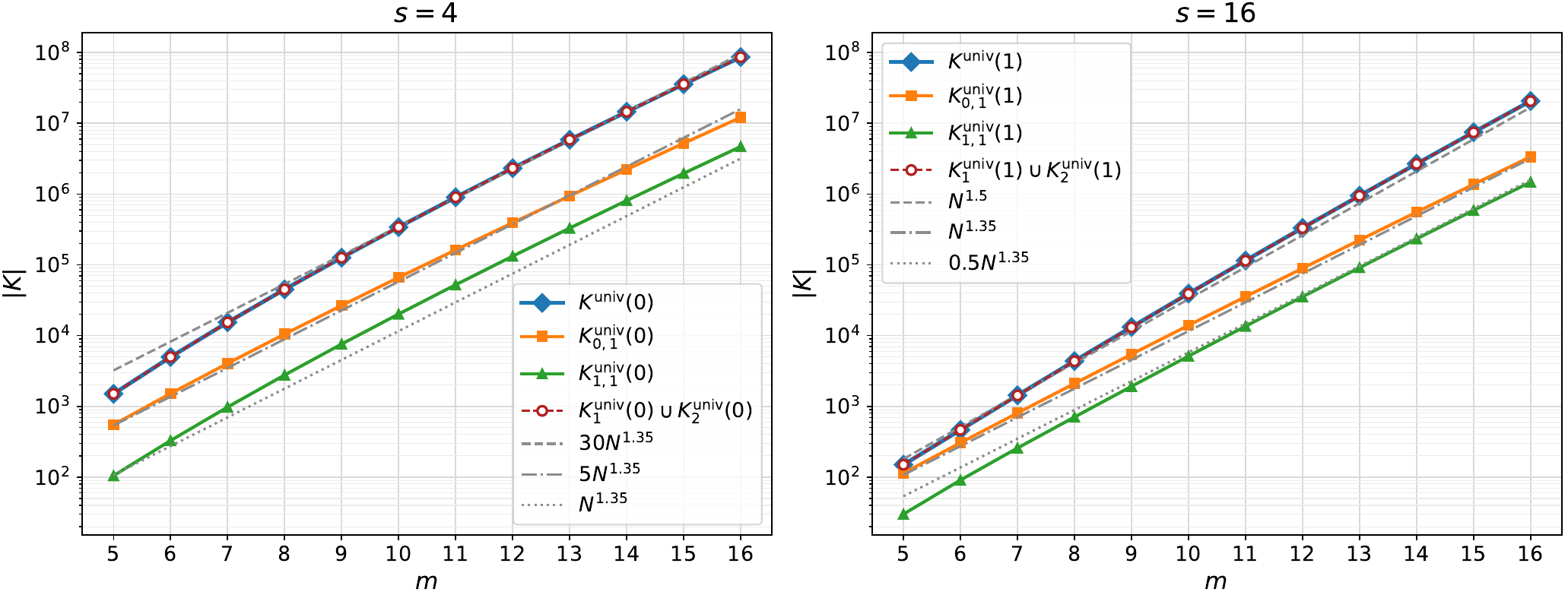}
  \caption{Cardinalities of the pre-selected index sets $K$ in dimensions $s=4$ (left) and $s=16$ (right).}
  \label{fig0:sizeK} 
\end{figure}

We investigate the cardinality of index sets $K$ by comparing $K^{\operatorname{univ}}(\tau)$ with the associated sets $ K^{\operatorname{univ}}_{\alpha,\lambda}(\tau)$ for various parameters. Figure \ref{fig0:sizeK} reports these cardinalities for two representative dimensional settings. For $s=4$, we consider the unweighted case ($\tau=0$) and compare the unrestricted sets $K^{\operatorname{univ}}(0)$, $K^{\operatorname{univ}}_{0,1}(0)$, $K^{\operatorname{univ}}_{1,1}(0)$, and the union $K^{\operatorname{univ}}_{1}(0)\cup K^{\operatorname{univ}}_{2}(0)$. In contrast, for $s=16$, the unrestricted set $K^{\operatorname{univ}}(0)$ becomes excessively large; we therefore impose the weight constraint \eqref{restrict_gamma} with decay parameter $\tau=1$ and compare the corresponding modified sets $K^{\operatorname{univ}}(1)$, $K^{\operatorname{univ}}_{0,1}(1)$, $K^{\operatorname{univ}}_{1,1}(1)$, and $K^{\operatorname{univ}}_{1}(1)\cup K^{\operatorname{univ}}_{2}(1)$.

The empirical results show that for $s=4$, $K^{\operatorname{univ}}(0)$, $K^{\operatorname{univ}}_{0,1}(0)$, and $K^{\operatorname{univ}}_{1,1}(0)$ all exhibit $\mathcal{O}(N^{1.35})$ growth rates, while for $s=16$, $K^{\operatorname{univ}}(1)$ shows $\mathcal{O}(N^{1.5})$  growth, and $K^{\operatorname{univ}}_{0,1}(1)$, $K^{\operatorname{univ}}_{1,1}(1)$ maintain $\mathcal{O}(N^{1.35})$ growth. The superlinear growth is likely due to the polynomial terms in $m=\log_2 N$ in the upper bounds \eqref{eqn:Kunivcard1} and \eqref{eqn:Kunivcard2}. Notably, the union 
$K^{\operatorname{univ}}_{1}(\tau) \cup K^{\operatorname{univ}}_{2}(\tau)$ already includes nearly all of $K^{\operatorname{univ}}(\tau)$, numerically justifying the choice $\alpha_0=2$ for the implementation in Section~\ref{subsubsec_ac}.

\subsection{Approximation of non-periodic functions}\label{subsec_num_app}
To validate our theoretical findings, we consider the following non-periodic test functions for $s=4,16$:
\begin{align*}
    f_{1}(\bm{x})&=\prod_{j=1}^{s}\left(1+\frac{1}{j^3}\left(\sqrt{x_j}-\frac{2}{3}\right)\right),\\
    f_{2}(\bm{x})&=\prod_{j=1}^{s}\left(1+\frac{1}{j^3}(x_j\exp(x_j)-1) \right).
\end{align*}
Because $\sqrt{x}$ is $1/2$-H\"older continuous and $x\exp(x)-1$ is analytic on $[0,1]$, we have 
$\Vert f_{1}\Vert_{s,0,1-\varepsilon}<\infty$ for any $\varepsilon\in (0,1)$ and $\Vert f_{2}\Vert_{s,\alpha,\lambda}<\infty$ for any $\alpha\in\mathbb{N}_0,\lambda\in(0,1]$. 
Furthermore, as the products $\prod_{j\in u}(\sqrt{x_j}-2/3)$ and $\prod_{j\in u}(x_j\exp(x_j)-1)$ for subsets $u\subseteq 1{:}s$ are mutually $L^2$-orthogonal, we have $\gamma_u=\prod_{j\in u}j^{-3}$ for both $f_1$ and $f_2$.

We set $E=60$ in Algorithm \ref{alg:ep_median}. As in Section~\ref{subsec:Ksize}, we use $\tau=0$ for $s=4$ and $\tau=1$ for $s=16$. For $f_1$, we test $K^{\operatorname{univ}}(\tau)$ and $K^{\operatorname{univ}}_{0,1}(\tau)$, and for $f_2$, we additionally test $K^{\operatorname{univ}}_{1,1}(\tau)$. The number of repetitions $R$ is chosen to be the smallest odd integer satisfying condition (\ref{condition_sim_R}) with $\delta=0.02$ and failure probability $\delta'=0.001$. The total number of function evaluations is $M=RN$. We assess the approximation accuracy of the algorithm by the \textbf{normalized $L^2$-error}:
\begin{equation*}
\frac{\Vert A_m^{E,\epsilon}(f)-f\Vert_{L^2}}{\Vert f\Vert_{L^2}}.
\end{equation*}

As a benchmark, we compare our method with the deterministic multiple-shift lattice algorithm based on the tent transformation proposed in \cite[Section~7]{du2026deterministic}. Following the parameter setting therein, we choose the smoothness parameter $\alpha=1$ for $f_1$ and $\alpha=3/2$ for $f_2$, consistent with the regularity
\[
f_1\in \mathcal{C}_{s,1-\varepsilon,\gamma},\qquad
f_2\in \mathcal{C}_{s,3/2-\varepsilon,\gamma},
\]
for arbitrarily small $\varepsilon>0$, where $\mathcal{C}_{s,\alpha,\gamma}$ denotes the weighted half-period cosine space defined in \cite[Definition~2]{du2026deterministic}. In both cases, the product weights are chosen as $ \gamma_u=\prod_{j\in u}j^{-3}.$
We employ the adaptive deterministic shift construction described in \cite[Section~5.3]{du2026deterministic}. The implementation is based on the publicly available source code at
\url{https://github.com/Jiarui-Du/multiple-shift},
and the corresponding normalized errors are labeled as \emph{Tent} in the figures.

To further evaluate the quality of the constructed index set $H_N^{E,\epsilon}$ for different choices of $K$, we also compute the associated \textbf{normalized truncation error}:
\begin{equation*}
\left(\frac{\sum_{\bm{k}\in\mathbb{N}_0^s\setminus H_N^{E,\epsilon}}|\widehat{f}(\bm{k})|^2}{\Vert f\Vert_{L^2}^2}\right)^{1/2}
=
\left(1-\frac{\sum_{\bm{k}\in H_N^{E,\epsilon}}|\widehat{f}(\bm{k})|^2}{\Vert f\Vert_{L^2}^2}\right)^{1/2},
\end{equation*}
which forms one component of the total error according to (\ref{decomp_1}), where $\widehat{f}(\bm{k})$ denotes the $\bm{k}$-th Walsh coefficient. We compare this quantity with the theoretically optimal oracle truncation error, corresponding to the set $H_N^{\mathrm{oracle}}$ consisting of the indices of the $N$ Walsh coefficients with the largest magnitudes, i.e., $|\widehat{f}(\bm{k})|\ge |\widehat{f}(\bm{h})|$ for any $\bm{k}\in H_N^{\mathrm{oracle}}$ and $\bm{h}\notin H_N^{\mathrm{oracle}}$. The corresponding normalized error is
\begin{equation*}
\left(1-\frac{\sum_{\bm{k}\in H_N^{\mathrm{oracle}}}|\widehat{f}(\bm{k})|^2}{\Vert f\Vert_{L^2}^2}\right)^{1/2}.
\end{equation*}

\begin{figure}[htbp]
  \centering
  \includegraphics[width=\linewidth]{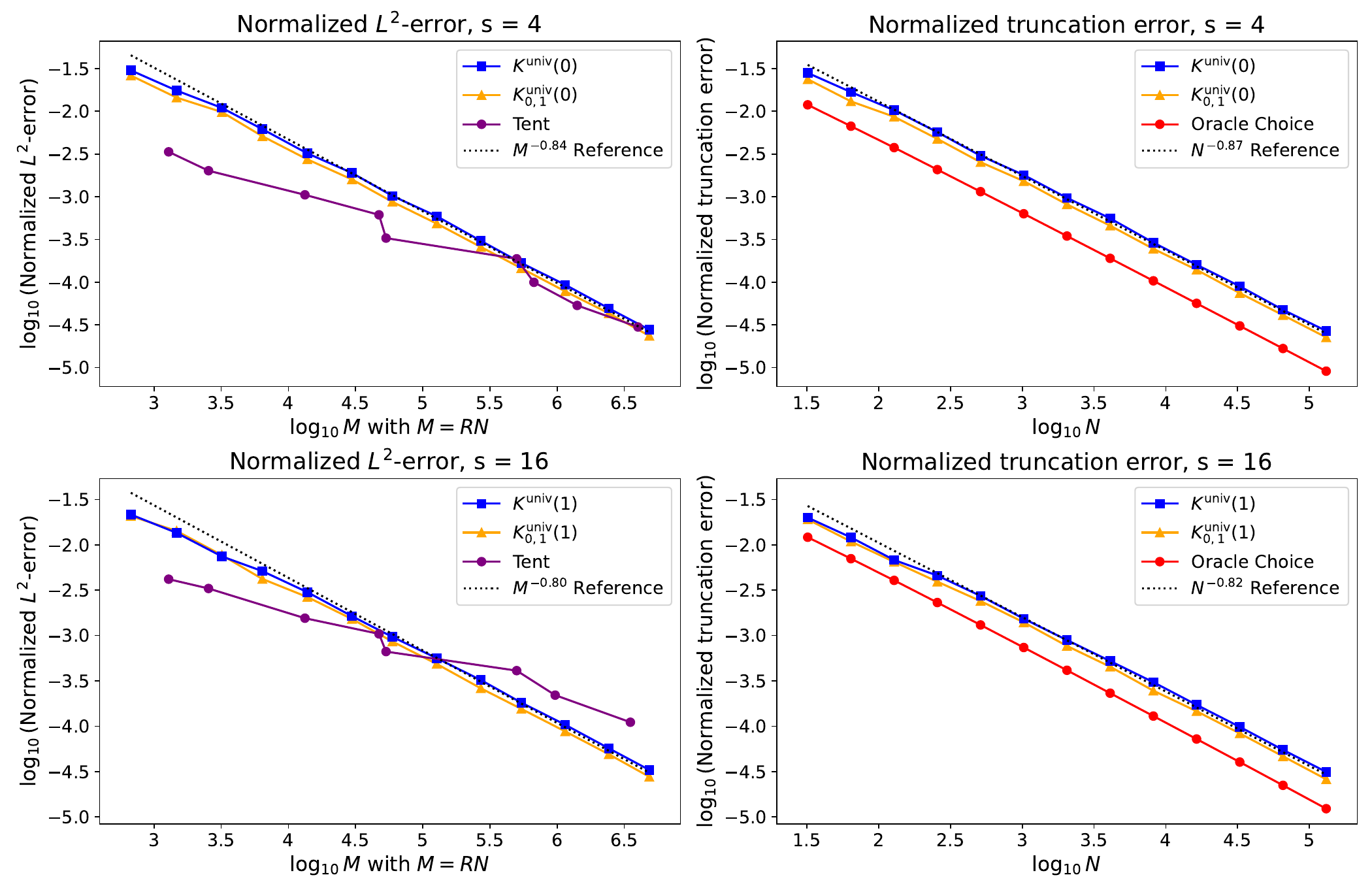}
  \caption{Normalized $L^2$-error and normalized truncation error for $f_1$ with $s = 4$ (top) and $s = 16$ (bottom).}
  \label{fig1:f_1_plt} 
\end{figure}

\begin{figure}[htbp]
  \centering
  \includegraphics[width=\linewidth]{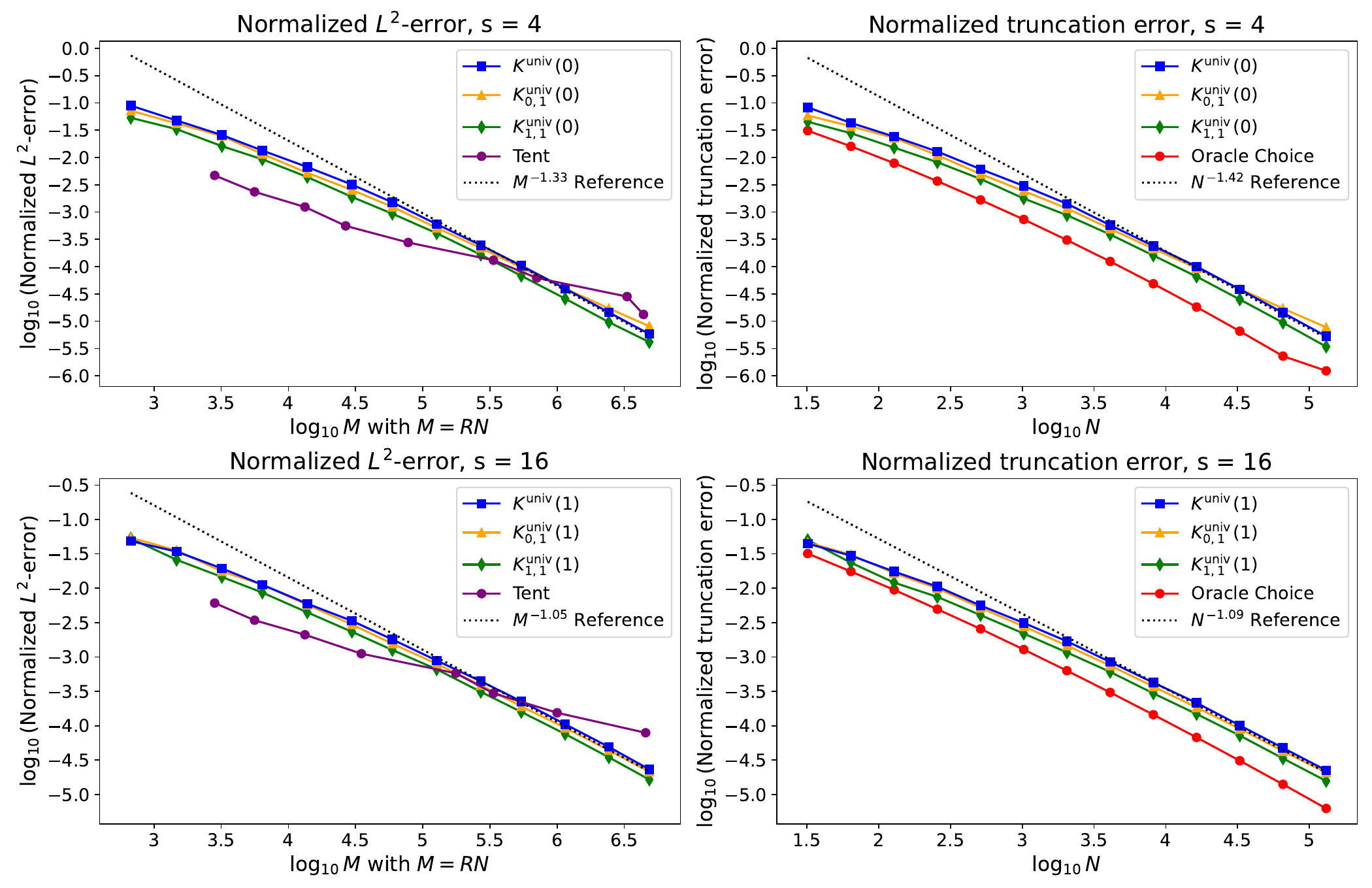}
  \caption{Normalized $L^2$-error and normalized truncation error for $f_2$ with $s = 4$ (top) and $s = 16$ (bottom).}
  \label{fig2:f_2_plt} 
\end{figure}

Figure \ref{fig1:f_1_plt} shows the normalized $L^2$-errors and normalized truncation errors for the approximation of $f_1$. In the low-dimensional case ($s=4$), the proposed method achieves an observed convergence rate of $\mathcal{O}(M^{-0.84})$, while for $s=16$ the observed rate remains $\mathcal{O}(M^{-0.80})$, indicating that the convergence behavior is robust to dimensionality. 
Compared with the tent-transformation benchmark, the proposed method exhibits larger errors for small values of $M$, likely because $f_1$ can be well approximated by the low-frequency cosine series used in the benchmark. As $M$ increases, however, the gap gradually decreases, and for $s=16$ the proposed method yields smaller errors. The proposed method also exhibits a marginally faster convergence rate over the tested range, although the theoretical asymptotic rates are both $O(M^{-1+\epsilon})$. We note that, unlike the benchmark method, the proposed algorithm requires no weight parameters and, when $K_{\operatorname{univ}}$ is used, no smoothness parameters.

A comparison of the total errors (left plots) with the truncation errors (right plots) demonstrates that the two remain close for all choices of $M$ and $K$, suggesting that the truncation errors dominate the approximation errors. Moreover, these truncation errors are within a constant factor of those under the oracle choice, indicating that $H_N^{E,\epsilon}$ effectively captures the dominant Walsh coefficients, and no significant improvement is possible when the approximant is restricted to $N$-term Walsh series.

Figure \ref{fig2:f_2_plt} reports the normalized $L^2$-errors and normalized truncation errors for the analytic function $f_2$. In the low-dimensional case ($s=4$), the observed convergence rate of the proposed method reaches $\mathcal{O}(M^{-1.33})$ at the upper end of our experimental range. As the dimensionality increases to $s=16$, the observed convergence rate decreases to $\mathcal{O}(M^{-1.05})$. As in the previous example, the proposed method achieves approximation errors comparable to the benchmark, with eventually smaller errors for sufficiently large $M$. We also observe that truncation errors under all choices of $K$ remain within a constant factor of the optimal ones, indicating that the gap between the practical convergence rate and the theoretical asymptotic rate is due to finite-sample effects.

Finally, to demonstrate how well the proposed index sets $K$ capture the dominant Walsh coefficients, Tables \ref{tab:intersection1} and \ref{tab:intersection2} report the intersection size $|H_N^{\text{oracle}} \cap K|$ for $f_1$ and $f_2$, respectively. All constructed sets $K$ capture a vast majority—and in the case of $K^{\operatorname{univ}}(\tau)$, the entirety—of the indices in $H_N^{\text{oracle}}$, explaining why the approximation errors remain close to the oracle bound. Table~\ref{tab:intersection2} also explains why $K^{\operatorname{univ}}_{1,1}(\tau)$ performs the best in Figure~\ref{fig2:f_2_plt}: $K^{\operatorname{univ}}_{1,1}(\tau)$ successfully captures all dominant Walsh coefficients for $m\geq 10$ while proposing fewer candidates than $K^{\operatorname{univ}}(\tau)$, reducing the noise in selecting $H_N^{E,\epsilon}$. These findings suggest that smoothness-specific choices of $K$ remain effective in finite-sample settings even when the target functions are much smoother.

\begin{table}[htbp]
  \centering
  \caption{Comparison of $|H_N^{\mathrm{oracle}} \cap K|$ for $f_1$ across different index sets $K$ and levels $m$. Recall that $\tau = 0$ for $s = 4$ and $\tau = 1$ for $s = 16$.}
  \label{tab:intersection1}
  \begin{tabular}{cc cccc cccc}
    \toprule
    \multirow{2}{*}{$m$} & \multirow{2}{*}{$N=2^m$} & \multicolumn{2}{c}{$K^{\operatorname{univ}}_{0,1}(\tau)$}  && \multicolumn{2}{c}{$K^{\operatorname{univ}}(\tau)$} \\
    \cmidrule{3-4}\cmidrule{5-6}
    & & $s=4$ & $s = 16$ && $s=4$ & $s = 16$ \\
    \midrule
    8 & 256 & 246   & 246 && \textbf{256}  & \textbf{256} \\
    10 & 1024 & 998  & 999 && \textbf{1024}   & \textbf{1024} \\
    12 & 4096 & 4028 & 4031 && \textbf{4096}  & \textbf{4096} \\
    14 & 16384 & 16169 & 16212 && \textbf{16384} & \textbf{16384} \\
    16 & 65536 & 64955 & 65019 && \textbf{65536} & \textbf{65536}\\
    \bottomrule
  \end{tabular}
\end{table}

\begin{table}[htbp]
  \centering
  \caption{Comparison of $|H_N^{\mathrm{oracle}} \cap K|$ for $f_2$ across different index sets $K$ and levels $m$. Recall that $\tau = 0$ for $s = 4$ and $\tau = 1$ for $s = 16$.}
  \label{tab:intersection2}
  \begin{tabular}{cc cccc cccc cccc}
    \toprule
    \multirow{2}{*}{$m$} & \multirow{2}{*}{$N=2^m$} & \multicolumn{2}{c}{$K^{\operatorname{univ}}_{0,1}(\tau)$} && \multicolumn{2}{c}{$K^{\operatorname{univ}}_{1,1}(\tau)$} && \multicolumn{2}{c}{$K^{\operatorname{univ}}(\tau)$} \\
    \cmidrule{3-4} \cmidrule{6-7} \cmidrule{9-10}
    & & $s=4$ & $s = 16$ && $s=4$ & $s = 16$ && $s=4$ & $s = 16$ \\
    \midrule
    8 & 256 & 242   & 243   && \textbf{256}   & 253   && \textbf{256}  & \textbf{256} \\
    10 & 1024 & 991  & 971  && \textbf{1024}   & \textbf{1024}  && \textbf{1024}   & \textbf{1024} \\
    12 & 4096 & 3931 & 3885  && \textbf{4096}  & \textbf{4096}  && \textbf{4096}  & \textbf{4096} \\
    14 & 16384 & 15630 & 15455 && \textbf{16384}  & \textbf{16384} && \textbf{16384} & \textbf{16384} \\
    16 & 65536 & 62340 & 62044 && \textbf{65536} & \textbf{65536} && \textbf{65536} & \textbf{65536}\\
    \bottomrule
  \end{tabular}
\end{table}

\section{Conclusion}\label{sec_6}
In this paper, we propose a universal median digital-net algorithm for the $L^2$-approximation of non-periodic functions on $[0,1]^s$, requiring no weight parameters as input. We prove that the algorithm achieves a probabilistic error bound of $\mathcal{O}(M^{-\alpha-\lambda+\eta})$ for any $\eta>0$ under the smoothness assumption $\Vert f\Vert_{s,\alpha,\lambda}<\infty$, and the implied constant grows at most polynomially in $s$ under suitable decay conditions of the relative variations $\gamma_u$ and an appropriate choice of the pre-selected index set $K$. We propose two constructions of $K$ for practical implementation, one tailored to known smoothness parameters $\alpha, \lambda$ and one covering all parameters. We also employ the FWHT algorithm and Gray code ordering to accelerate the estimation of Walsh coefficients. 

Some challenges remain to be addressed in future research. First, as discussed in Section \ref{subsec_num_app}, the choice of index set $K$ is crucial for the practical performance. Removing insignificant coefficients in $K$ not only accelerates the implementation, but also reduces the selection noise and leads to smaller approximation error. Second, both Figure~\ref{fig1:f_1_plt} and~\ref{fig2:f_2_plt} demonstrate that the truncation error is dominant when using $N$-term Walsh series as the approximant,  suggesting that increasing the number of terms may reduce the approximation error. How to optimally tune this parameter however remains an open question. Lastly, our algorithm produces discontinuous approximants for continuous target functions, making the approximation poor in derivative-based norms. Extending our framework to smoother wavelet bases is therefore another important direction for future investigation.

\appendix

\section*{Appendix Overview} 
The appendices are organized as follows. Appendix~\ref{appendix_lambda} proves Lemma~\ref{lemma_lambda}, Appendix~\ref{appendix_psi} proves Lemma~\ref{lemma_psi}, Appendix~\ref{appendix_main} contains the proof of the main result, Theorem~\ref{thm_main},
Appendix~\ref{appendix_thm_card_univ} contains the proof of Theorem~\ref{thm_card_univ},
and Appendix~\ref{appendix_gray_code} establishes Proposition~\ref{prop_gray_code}. Throughout Appendix~\ref{appendix_lambda} to \ref{appendix_main}, the parameters $\alpha, \lambda, \delta$, and $\beta$ are treated as fixed constants. To streamline the derivations, we suppress the explicit dependence on these parameters in certain intermediate quantities. Specifically, we adopt the following shorthand notation
\begin{align*}
    K_{u,\alpha,\lambda}(T) = K_u(T),\quad T_{u,m,\alpha,\lambda,\beta,\bm{\gamma},\delta} = T_{u,m,\bm{\gamma}},\quad K_{m,\alpha,\lambda,\beta,\bm{\gamma},\delta} = K_{m,\bm{\gamma}},
\end{align*}
where the full expressions are defined in (\ref{K_u}), (\ref{T_u_m_gamma}), and (\ref{K_gamma}), respectively.

\section{Proof of Lemma \ref{lemma_lambda}}\label{appendix_lambda}
\begin{proof}
    According to \cite[Lemma 5]{pan2026dimension}, for $\theta\in (0,1)$ and $\emptyset\ne u\subseteq 1{:}s$, we have
    \begin{equation}\label{ap_L2_K_u}
        \sum_{\bm{k}\in\mathbb{N}_*^s\setminus K_{u}(T)}|\widehat{f}_u(\bm{k})|^2\le \frac{\Vert f\Vert_{u,\alpha,\lambda}^2}{4^{\theta T}}C^{|u|}_{\alpha,\lambda,\theta}.
    \end{equation}
    From (\ref{wal_f_u}) we have $\widehat{f}(\bm{k})=\widehat{f}_u(\bm{k})$ if $\bm{s}(\bm{k})=u$. Therefore,
    \begin{align}
        \sum_{\bm{k}\in\mathbb{N}_0^s\setminus K_{m,\bm{\gamma}}}|\widehat{f}(\bm{k})|^2&=\sum_{u\subseteq 1{:}s}\sum_{\substack{\bm{k}\in\mathbb{N}_0^s\setminus K_{m,\bm{\gamma}}\\ \bm{s}(\bm{k})=u}}|\widehat{f}(\bm{k})|^2=\sum_{\emptyset\ne u\subseteq 1{:}s}\sum_{\bm{k}\in\mathbb{N}_*^s\setminus K_u(T_{u,m,\bm{\gamma}})}|\widehat{f}_u(\bm{k})|^2\nonumber\\
        &\le \sum_{u\subseteq 1{:}s} \frac{\Vert f\Vert_{u,\alpha,\lambda}^2}{4^{\theta T_{u,m,\bm{\gamma}}}}C^{|u|}_{\alpha,\lambda,\theta}\label{ap_L2_K_r_1}\\
        &= \frac{\Gamma_{\alpha,\lambda,\beta,\bm{\gamma}}(m)^{2\theta(\alpha+\lambda)}}{\delta^{2\theta(\alpha+\lambda)}2^{2\theta(\alpha+\lambda)m}}\sum_{u\subseteq 1{:}s}\gamma_{u}^{-\frac{2\theta(\alpha+\lambda)}{\alpha+\lambda+1/2}}\beta^{2\theta(\alpha+\lambda)|u|} C^{|u|}_{\alpha,\lambda,\theta}\gamma_u^2\Vert f\Vert^2_{s,\alpha,\lambda}\label{ap_L2_K_r_2}\\
        &= \Gamma_{\alpha,\lambda,\beta,\bm{\gamma},\delta,\theta}\frac{\Gamma_{\alpha,\lambda,\beta,\bm{\gamma}}(m)^{2\theta(\alpha+\lambda)}}{2^{2\theta(\alpha+\lambda)m}}\Vert f\Vert_{s,\alpha,\lambda}^2,\nonumber
    \end{align}
    where (\ref{ap_L2_K_r_1}) used (\ref{ap_L2_K_u}) and (\ref{ap_L2_K_r_2}) used the definition of $\gamma_u$ in (\ref{gamma_u}).
\end{proof}

\section{Proof of Lemma \ref{lemma_psi}}\label{appendix_psi}
To prove Lemma \ref{lemma_psi}, we need to establish the $L^1$-versions of \cite[Lemma 4]{pan2026dimension} and \cite[Lemma 5]{pan2026dimension}. We first introduce the following local notation. 

For $k\in\mathbb{N}_0$, we write the binary representation of $k$ again as
\begin{equation*}
k=\sum_{j=1}^{v}2^{c_j-1},    
\end{equation*}
where $v\in\mathbb{N}_0$ and $c_1>c_2>\cdots>c_v\ge 1$. We then denote $\kappa = \{c_1,\ldots,c_v\}$ as the positions of nonzero bits in the binary expansion of $k$. Henceforth, we identify $k$ with $\kappa$, as each uniquely determines the other. In this framework, $|\kappa|=v$ equals the number of nonzero bits of $k$. For an integer $1\le q\le v$, we denote 
\begin{equation*}
    \lceil \kappa\rceil_q=c_q,\quad \lceil \kappa\rceil_{1{:}q}=\{c_1,\ldots,c_q\}.
\end{equation*}
For $q\ge v$ or $q=0$, we set $\lceil \kappa\rceil_q=0$ by convention.

With this notation, the following lemma provides the $L^1$-version of \cite[Lemma~4]{pan2026dimension}.
\begin{lemma}\label{lemma_partial_L1_sum}
    Let $\alpha\in\mathbb{N}_0$ and $\lambda\in(0,1]$. Suppose $V^v_\lambda(f_u^{(\bm{\alpha}_u)})<\infty$ for nonempty $u\subseteq 1:s, \bm{\alpha}_u=(\alpha_j,j\in u)\in \mathbb{Z}_{\le\alpha}^{|u|}$ and a subset $v\subseteq \{j:\alpha_j=\alpha\}.$ Then for $\bm{k}_u=(k_j,j\in u)\in\mathbb{N}^{|u|}$ satisfying $|\kappa_j|= \alpha+1$ for $j\in v$ and $|\kappa_j|=\alpha_j$ for $j\in u\setminus v$,
    \begin{equation*}
        \sum_{\bm{k}'\in B(\bm{k}_u,v)}|\widehat{f}_u(\bm{k}')|\le 2^{|u|}V^v_\lambda(f_u^{(\bm{\alpha}_u)})\prod_{j\in u}2^{(\frac{3}{2}-\lambda)\lceil \kappa_j\rceil_{\alpha+1}}\prod_{\ell\in\kappa_j}2^{-\ell-1},
    \end{equation*}
    where 
    \begin{equation}
        B(\bm{k}_u,v)=\{\bm{k}'\in\mathbb{N}_{*}^{s}:s(\bm{k}')=u,\lceil \kappa'_j \rceil_{1:(\alpha+1)}=\kappa_j,\forall j\in v,\kappa_j'=\kappa_j,\forall j\in u\setminus v \}.
    \end{equation}
\end{lemma}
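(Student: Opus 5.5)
The plan is to derive this $L^1$ bound from the $L^2$ version, \cite[Lemma~4]{pan2026dimension}, by Cauchy--Schwarz, with a careful count of the indices in $B(\bm{k}_u,v)$. The $L^2$ lemma should give, under the same hypotheses,
\begin{equation*}
\sum_{\bm{k}'\in B(\bm{k}_u,v)}|\widehat{f}_u(\bm{k}')|^2\le V^v_\lambda(f_u^{(\bm{\alpha}_u)})^2\prod_{j\in u}c_j\, 2^{-2\lambda\lceil\kappa_j\rceil_{\alpha+1}}\prod_{\ell\in\kappa_j}2^{-2\ell-2}\cdot(\text{correction}),
\end{equation*}
where the exponent $2^{(1-\lambda)\cdot(\cdot)}$ pattern comes from the fractional Vitali variation with $\mathfrak{p}=2$. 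If I recall it in the wrong normalization, I would instead re-derive it in the form I need. The derivation would write $\widehat{f}_u(\bm{k}')$ via repeated integration by parts against the Walsh function, using the Walsh-integral formulas $\int_0^x \wal{k}$, peeling off one factor $2^{-\ell-1}$ per nonzero bit. The residual Walsh coefficient of the derivative $f_u^{(\bm{\alpha}_u)}$ is then controlled on dyadic boxes of side $2^{-\lceil\kappa_j\rceil_{\alpha+1}}$ by $V^v_\lambda$ through Parseval/Bessel on the partition into those boxes.

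The Cauchy--Schwarz step goes as follows. For $j\in u\setminus v$ the coordinate $k'_j=k_j$ is fixed. For $j\in v$, $k'_j$ ranges over all integers whose top $\alpha+1$ bits are $\kappa_j$, so the free lower bits lie strictly below position $\lceil\kappa_j\rceil_{\alpha+1}$. This gives
\begin{equation*}
|B(\bm{k}_u,v)|=\prod_{j\in v}2^{\lceil\kappa_j\rceil_{\alpha+1}-1}.
\end{equation*}
Therefore
\begin{equation*}
\sum_{B}|\widehat{f}_u|\le |B|^{1/2}\Bigl(\sum_B|\widehat{f}_u|^2\Bigr)^{1/2},
\end{equation*}
and the factor $|B|^{1/2}=\prod_{j\in v}2^{(\lceil\kappa_j\rceil_{\alpha+1}-1)/2}$ combines with $2^{(1-\lambda)\lceil\kappa_j\rceil_{\alpha+1}}$ from the $L^2$ bound to give $2^{(3/2-\lambda)\lceil\kappa_j\rceil_{\alpha+1}}$ for $j\in v$. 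For $j\in u\setminus v$ we have $|\kappa_j|=\alpha_j\le\alpha$, so $\lceil\kappa_j\rceil_{\alpha+1}=0$ and the stated factor is $1$, consistent with the claim. The remaining constants $c_j$ are absorbed into $2^{|u|}$.

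The main obstacle is matching the exact exponents and constants of \cite[Lemma~4]{pan2026dimension}. If that lemma's right-hand side carries $2^{2(1-\lambda)\lceil\kappa_j\rceil_{\alpha+1}}$ for the squared sum, the computation above closes directly. If instead its constant is not $\le 4^{|u|}$, I would redo the integration-by-parts bound to verify that per-coordinate constants of at most $2$ suffice after taking square roots. In particular, I would check the off-by-one in $|B|$ (the $-1$ in the exponent, which gives the slack $2^{-1/2}$ per coordinate), since it is exactly what absorbs a factor $\sqrt2$ from the $L^2$ constant.
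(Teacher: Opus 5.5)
Your plan is exactly the paper's proof: it quotes \cite[Lemma~4]{pan2026dimension} in the form $\sum_{\bm{k}'\in B(\bm{k}_u,v)}|\widehat{f}_u(\bm{k}')|^2\le 4^{|u|}\bigl(V^v_\lambda(f_u^{(\bm{\alpha}_u)})\bigr)^2\prod_{j\in u}4^{(1-\lambda)\lceil\kappa_j\rceil_{\alpha+1}}\prod_{\ell\in\kappa_j}4^{-\ell-1}$, bounds $|B(\bm{k}_u,v)|=\prod_{j\in v}2^{\lceil\kappa_j\rceil_{\alpha+1}-1}\le\prod_{j\in u}2^{\lceil\kappa_j\rceil_{\alpha+1}}$, and applies Cauchy--Schwarz, so your ``closes directly'' branch is the one that occurs. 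Your first display, with $2^{-2\lambda\lceil\kappa_j\rceil_{\alpha+1}}$ and an unspecified correction, has the wrong form and should be replaced by this one; no re-derivation via integration by parts and no use of the $-1$ slack in $|B|$ is needed, since $\sqrt{4^{|u|}}=2^{|u|}$ already gives the stated constant.
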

\begin{proof}
    According to \cite[Lemma 4]{pan2026dimension}, we have
    \begin{equation*}
        \sum_{\bm{k}'\in B(\bm{k}_u,v)}|\widehat{f}_u(\bm{k}')|^2\le 4^{|u|}\left(V^v_\lambda(f_u^{(\bm{\alpha}_u)})\right)^2\prod_{j\in u}4^{(1-\lambda)\lceil \kappa_j\rceil_{\alpha+1}}\prod_{\ell\in\kappa_j}4^{-\ell-1}.
    \end{equation*}
    Note that 
    \begin{equation*}
        |B(\bm{k}_u,v)|=\prod_{j\in v}2^{\lceil \kappa_j\rceil_{\alpha+1}-1}\le \prod_{j\in u}2^{\lceil \kappa_j\rceil_{\alpha+1}}.
    \end{equation*}
    By Cauchy–Schwarz inequality,
    \begin{align*}
       \left( \sum_{\bm{k}'\in B(\bm{k}_u,v)}|\widehat{f}_u(\bm{k}')|\right)^2 &\le |B(\bm{k}_u,v)|\sum_{\bm{k}'\in B(\bm{k}_u,v)}|\widehat{f}_u(\bm{k}')|^2\\
       &\le \prod_{j\in u}2^{\lceil \kappa_j\rceil_{\alpha+1}}\sum_{\bm{k}'\in B(\bm{k}_u,v)}|\widehat{f}_u(\bm{k}')|^2\\
       &\le 2^{2|u|}\left(V^v_\lambda(f_u^{(\bm{\alpha}_u)})\right)^2\prod_{j\in u}2^{(3-2\lambda)\lceil \kappa_j\rceil_{\alpha+1}}\prod_{\ell\in\kappa_j}2^{-2\ell-2}.\qedhere
    \end{align*}
\end{proof}

The $L^1$-version of \cite[Lemma 5]{pan2026dimension} follows from Lemma \ref{lemma_partial_L1_sum} and the proof of \cite[Lemma 5]{pan2026dimension}.

\begin{lemma}\label{lemma_L1_Ku}
    Suppose $\Vert f\Vert_{u,\alpha,\lambda}<\infty$ for some $\alpha\in\mathbb{N}_0, \lambda\in(0,1]$ and nonempty $u\subseteq 1:s$. Then for $T\in\mathbb{R}$ and $\eta\in(0,1-1/(2\alpha+2\lambda))$,
    \begin{equation*}
        \sum_{\bm{k}\in\mathbb{N}_*^s\setminus K_u(T)}|\widehat{f}_u(\bm{k})|\le \frac{\Vert f\Vert_{u,\alpha,\lambda}}{2^{\eta T}}D_{\alpha,\lambda,\eta}^{|u|},
    \end{equation*}
    where $D_{\alpha,\lambda,\eta}$ is defined in (\ref{constant_D}).
\end{lemma}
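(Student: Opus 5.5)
The plan is to mimic the proof of \cite[Lemma 5]{pan2026dimension}, replacing the $L^2$ block bound \cite[Lemma 4]{pan2026dimension} with its $L^1$ counterpart, Lemma~\ref{lemma_partial_L1_sum}.

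\textbf{Step 1: cover the complement by blocks.} Every $\bm{k}$ with $\bm{s}(\bm{k})=u$ is assigned a type. For each $j\in u$:
\begin{itemize}
\item if $|\kappa_j|\le\alpha$, set $\alpha_j=|\kappa_j|$ and $j\notin v$;
\item if $|\kappa_j|>\alpha$, set $\alpha_j=\alpha$ and $j\in v$, and replace $\kappa_j$ by its leading part $\lceil\kappa_j\rceil_{1:(\alpha+1)}$.
\end{itemize}
Then $\bm{k}$ lies in exactly one block $B(\bm{k}_u,v)$, where $\bm{k}_u$ has $|\kappa_j|=\alpha+1$ for $j\in v$ and $|\kappa_j|=\alpha_j$ otherwise.

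Membership in $K_u(T)$ depends only on the block. This holds because $\mu_{\alpha,\lambda}(k_j)$ depends only on the top $\alpha+1$ bits, and it equals $\sum_{\ell\in\kappa_j}\ell-(1-\lambda)\lceil\kappa_j\rceil_{\alpha+1}$ for $j\in v$. So $\mathbb{N}_*^s\setminus K_u(T)$, restricted to support $u$, is a disjoint union of blocks with $\mu_{\alpha,\lambda}(\bm{k}_u)>T$. Since $f_u$ has support $u$, only such $\bm{k}$ contribute.

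Each block type $(\bm{\alpha}_u,v)$ appears in the supremum \eqref{f_u_a_l}, since $\alpha_j>0$ for $j\in u\setminus v$ and $\alpha_j=\alpha$ for $j\in v$. Hence Lemma~\ref{lemma_partial_L1_sum} bounds each block sum by $2^{|u|}\Vert f\Vert_{u,\alpha,\lambda}\prod_{j\in u}g(\kappa_j)$, where
\[
g(\kappa_j)=2^{(3/2-\lambda)\lceil\kappa_j\rceil_{\alpha+1}}\prod_{\ell\in\kappa_j}2^{-\ell-1}.
\]

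\textbf{Step 2: trade the constraint for a geometric weight.} Because $\mu_{\alpha,\lambda}(\bm{k}_u)>T$, we have $1\le 2^{\eta(\mu_{\alpha,\lambda}(\bm{k}_u)-T)}$. Multiplying each block bound by this factor gives
\[
\sum_{\bm{k}\notin K_u(T)}|\widehat f_u(\bm{k})|\le 2^{-\eta T}\,2^{|u|}\,\Vert f\Vert_{u,\alpha,\lambda}\prod_{j\in u}S_j,
\]
where $S_j=\sum_{\kappa}g(\kappa)\,2^{\eta\mu_{\alpha,\lambda}(\kappa)}$ is summed over all admissible $\kappa$ with $1\le|\kappa|\le\alpha+1$. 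The sum over blocks now factorizes across coordinates, which is why the constant appears as $D^{|u|}$.

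\textbf{Step 3: evaluate the one-dimensional sum.} The goal is to show $2S_j\le D_{\alpha,\lambda,\eta}$, splitting $S_j$ into two cases.

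\emph{Case $|\kappa|=\alpha'\le\alpha$.} Each term is $\prod_{\ell\in\kappa}2^{-(1-\eta)\ell-1}$. Summing over strictly decreasing $c_1>\dots>c_{\alpha'}\ge1$ is bounded by $\frac{1}{\alpha'!}\bigl(\sum_{\ell\ge1}2^{-(1-\eta)\ell}\bigr)^{\alpha'}2^{-\alpha'}$, which gives $\frac{2^{-\alpha'}}{\alpha'!(2^{1-\eta}-1)^{\alpha'}}$. After the factor $2$ from $2^{|u|}$, this yields the terms $\frac{2^{-\alpha'+1}}{\alpha'!(2^{1-\eta}-1)^{\alpha'}}$.

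The case $\alpha'=0$ does not occur for $j\in u$. Instead, the $\alpha'=\alpha$ term with $j\notin v$ and the $v$-term together account for the listed sum. It is harmless that the sum starts at $\alpha'=0$, since this only enlarges the bound.

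\emph{Case $|\kappa|=\alpha+1$.} Write $c=\lceil\kappa\rceil_{\alpha+1}$. The exponent of $c$ is
\[
(3/2-\lambda)c-c-1+\eta\lambda c=-\bigl[(\lambda-1/2)+\lambda(1-\eta)\bigr]c-1,
\]
while the top $\alpha$ bits, each above $c$, contribute $2^{-(1-\eta)\ell-1}$ apiece. First sum the top $\alpha$ bits over $\ell>c$. This gives at most $\frac{2^{-\alpha}}{\alpha!}\,\frac{2^{-(1-\eta)\alpha c}}{(2^{1-\eta}-1)^{\alpha}}$. Combining this with the $c$-factor, the total exponent of $c$ is $-[(\alpha+\lambda)(1-\eta)-1/2]\,c$. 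Summing over $c\ge1$ gives $1/(2^{(\alpha+\lambda)(1-\eta)-1/2}-1)$, which is finite precisely because $\eta<1-1/(2\alpha+2\lambda)$. Together with the factor $2\cdot2^{-1}$, this reproduces the first term of $D_{\alpha,\lambda,\eta}$.

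\textbf{Main obstacle.} The main difficulty is Step 3's bookkeeping: ordering the sums correctly and using the $1/\alpha!$ symmetrization for strictly decreasing tuples. I would write the sum for $v$-coordinates by first fixing $c$, then summing the $\alpha$ larger positions, to keep the constants matching \eqref{constant_D}.
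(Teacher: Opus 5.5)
Your proposal is correct and follows the paper's proof step for step: you split the support-$u$ indices into the blocks $B(\bm{k}_u,v)$, bound each block by Lemma~\ref{lemma_partial_L1_sum}, replace the indicator of $\mu_{\alpha,\lambda}(\bm{k}_u)>T$ by $2^{\eta(\mu_{\alpha,\lambda}(\bm{k}_u)-T)}$, factorize over $j\in u$, and evaluate the one-dimensional sums as the paper does with $\rho_\eta=2^{\eta-1}$. One small slip: before the top bits are summed, the exponent of $c$ should be $-[\lambda(1-\eta)-1/2]c-1$, not $-[(\lambda-1/2)+\lambda(1-\eta)]c-1$; your combined exponent $-[(\alpha+\lambda)(1-\eta)-1/2]c$ and the resulting first term of $D_{\alpha,\lambda,\eta}$ are nevertheless correct.
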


\begin{proof}
According to (\ref{wal_f_u}), $\widehat{f}_u(\bm{k})=0$ if $s(\bm{k})\ne u$. Following \cite[Section 4]{pan2026dimension}, we note that 
    \begin{equation*}
        K_u(T)=\bigcup_{v\subseteq u}\bigcup_{\bm{k}_u\in \mathcal{K}_{u,v}(T)}B(\bm{k}_u,v),
    \end{equation*}
    where 
    \begin{equation*}
        \mathcal{K}_{u,v}(T) = \Biggl\{ (k_j, j \in u) \in \mathbb{N}^{|u|} \Biggm| \begin{aligned} &|\kappa_j| = \alpha+1 \ \forall j \in v, \, |\kappa_j| \leqslant \alpha \ \forall j \in u \setminus v, \\ &(\lambda-1) \sum_{j \in u} \lceil \kappa_j \rceil_{\alpha+1} + \sum_{j \in u} \|\kappa_j\| \leqslant T \end{aligned} \Biggr\}.
    \end{equation*}
    Let 
    \begin{equation*}
        \mathbb{N}^{u}_{\alpha}:=\{(k_j,j\in u)\in\mathbb{N}^{|u|}:|\kappa_j|\le \alpha+1,\forall j\in u \}.
    \end{equation*}
    Then 
    \begin{equation*}
        \bigcup_{\substack{\bm{k}_u\in\mathbb{N}_\alpha^u\\ v:=\{j:|\kappa_j|=\alpha+1\} }}B(\bm{k}_u,v) = \{\bm{k}'\in\mathbb{N}_*^s:\bm{s}(\bm{k}')=u\}.
    \end{equation*}
    By Lemma \ref{lemma_partial_L1_sum} and equation (\ref{f_u_a_l}), we have
    \begin{align}
        &\sum_{\bm{k}\in\mathbb{N}_*^s\setminus K_u(T)}|\widehat{f}_u(\bm{k})|\nonumber\\
        &=\sum_{\substack{\bm{k}_u\in\mathbb{N}_\alpha^u\\ v:=\{j:|\kappa_j|=\alpha+1\} }} \sum_{\bm{k}'\in B(\bm{k}_u,v)}|\widehat{f}_u(\bm{k}')| - \sum_{v\subseteq u}\sum_{\bm{k}_u\in \mathcal{K}_{u,v}(T)}\sum_{\bm{k}'\in B(\bm{k}_u,v)}|\widehat{f}_u(\bm{k}')|
        \nonumber\\
        &\le   
        2^{|u|}\Vert f\Vert_{u,\alpha,\lambda}\sum_{\bm{k}_u\in\mathbb{N}_\alpha^u}\bm{1}\left\{  \bm{k}_u\notin\bigcup_{v\subseteq u}\mathcal{K}_{u,v}(T) \right\}\prod_{j\in u}2^{(\frac{3}{2}-\lambda)\lceil \kappa_j\rceil_{\alpha+1}}\prod_{\ell\in\kappa_j}2^{-\ell-1}.\label{sum_up}
    \end{align}
    Because $\bm{k}_u\notin\bigcup_{v\subseteq u}\mathcal{K}_{u,v}(T)$ implies
    \begin{equation*}
        (\lambda-1)\sum_{j\in u}\lceil \kappa_j\rceil_{\alpha+1} + \sum_{j\in u}\Vert\kappa_j\Vert>T,
    \end{equation*}
    we have for any $ \eta\in(0,1-1/(2\alpha+2\lambda))$ and $\rho_\eta=2^{\eta-1}$,
    \begin{align*}
        &\sum_{\bm{k}_u\in\mathbb{N}_\alpha^u}\bm{1}\left\{  \bm{k}_u\notin\bigcup_{v\subseteq u}\mathcal{K}_{u,v}(T) \right\}\prod_{j\in u}2^{(\frac{3}{2}-\lambda)\lceil \kappa_j\rceil_{\alpha+1}}\prod_{\ell\in\kappa_j}2^{-\ell-1}\\
        &\le \sum_{\bm{k}_u\in\mathbb{N}_\alpha^u} 2^{-\eta T +\eta(\lambda-1)\sum_{j\in u}\lceil \kappa_j\rceil_{\alpha+1} + \eta\sum_{j\in u}\Vert\kappa_j\Vert}\prod_{j\in u}2^{(\frac{3}{2}-\lambda)\lceil \kappa_j\rceil_{\alpha+1}}\prod_{\ell\in\kappa_j}2^{-\ell-1}\\
        &=2^{-\eta T}\left(\sum_{\bm{k}_u\in\mathbb{N}_\alpha^u}\prod_{j\in u}2^{(\eta-1)(\lambda-1 )\lceil \kappa_j\rceil_{\alpha+1}+\lceil \kappa_j\rceil_{\alpha+1}/2}\prod_{l\in\kappa_j}2^{(\eta-1)\ell-1} \right)\\
        &=2^{-\eta T}\left(\sum_{\alpha'=0}^{\alpha}2^{-\alpha'}\sum_{\kappa\in \mathcal{S}_{\alpha'}}\prod_{\ell\in\kappa}\rho_\eta^\ell + 2^{-\alpha-1}\sum_{\kappa\in \mathcal{S}_{\alpha+1}}\rho_\eta^{\left(\lambda-1+1/(2\eta-2)\right)\lceil \kappa_j\rceil_{\alpha+1} } \prod_{\ell\in\kappa}\rho_\eta^\ell\right)^{|u|},
    \end{align*}
    where $\mathcal{S}_{\alpha'}:=\{\kappa\subseteq \mathbb{N}:|\kappa|=\alpha'\}$. According to \cite[equation (20)]{pan2026dimension}, we have
    \begin{equation*}
        \sum_{\kappa\in \mathcal{S}_{\alpha'}}\prod_{\ell\in\kappa}\rho_\eta^\ell\le \frac{1}{(\alpha')!(\rho_\eta^{-1}-1)^{\alpha'}}.
    \end{equation*}
    For $\kappa\in S_{\alpha+1}$, we let $\kappa^+=\lceil \kappa\rceil_{1:\alpha}$ and $\ell'=\lceil \kappa\rceil_{\alpha+1}$. By the one-to-one correspondence between $\kappa^+$ and $\kappa^+-\ell'\in \mathcal{S}_{\alpha}$,
    \begin{align*}
        \sum_{\kappa\in \mathcal{S}_{\alpha+1}}\rho_\eta^{\left(\lambda-1+1/(2\eta-2)\right)\lceil \kappa_j\rceil_{\alpha+1} } \prod_{\ell\in\kappa}\rho_\eta^\ell&\le \sum_{\ell'=1}^{\infty}\rho_\eta^{(\alpha+\lambda+1/(2\eta-2))\ell'}\sum_{\kappa^+-\ell'\in \mathcal{S}_{\alpha}}\prod_{\ell\in \kappa^+-\ell'}\rho_\eta^{\ell}\\
        &\le \sum_{\ell'=1}^{\infty}\rho_\eta^{(\alpha+\lambda+1/(2\eta-2))\ell'}\frac{1}{\alpha!(\rho_{\eta}^{-1}-1)^{\alpha}}\\
        &=\frac{1}{\alpha!(\rho_{\eta}^{-\alpha-\lambda-1/(2\eta-2)}-1)(\rho_{\eta}^{-1}-1)^{\alpha}}.
    \end{align*}
    Our conclusion follows once we put the above bounds into inequality (\ref{sum_up}).
\end{proof}

Combining Lemma \ref{lemma_partial_L1_sum} and Lemma \ref{lemma_L1_Ku}, we obtain Lemma \ref{lemma_psi}.
\begin{proof}[Proof of Lemma \ref{lemma_psi}]
    According to Lemma \ref{lemma_L1_Ku}, for $\eta\in (0,1-1/(2\alpha+2\lambda))$,
    \begin{align*}
        \sum_{\bm{k}\in\mathbb{N}_0^s\setminus K_{m,\bm{\gamma}}}|\widehat{f}(\bm{k})|&\le \sum_{u\subseteq 1:s}\frac{\Vert f\Vert_{u,\alpha,\lambda}}{2^{\eta T_{u,m,\bm{\gamma}}}}D_{\alpha,\lambda,\eta}^{|u|}\\
        &= \frac{\Gamma_{\alpha,\lambda,\beta,\bm{\gamma}}(m)^{\eta(\alpha+\lambda)}\Vert f\Vert_{s,\alpha,\lambda} }{\delta^{\eta(\alpha+\lambda)}2^{\eta(\alpha+\lambda)m}}\sum_{u\subseteq 1:s}\gamma_{u}^{1-\frac{\eta(\alpha+\lambda)}{\alpha+\lambda+1/2}}\beta^{\eta(\alpha+\lambda)|u|} D_{\alpha,\lambda,\eta}^{|u|}\\
        &=\Theta_{\alpha,\lambda,\beta,\bm{\gamma},\delta,\eta}\frac{\Gamma_{\alpha,\lambda,\beta,\bm{\gamma}}(m)^{\eta(\alpha+\lambda)}}{2^{\eta(\alpha+\lambda)m}}\Vert f\Vert_{s,\alpha,\lambda},
    \end{align*}
    where in the second equality we substituted the value of $T_{u,m,\bm{\gamma}}$ and used the definition of $\gamma_u$ in (\ref{gamma_u}).
\end{proof}

\section{Proof of Theorem \ref{thm_main}}\label{appendix_main}
For theoretical analysis, the elements of $K$ are ordered as $\{\bm{k}_j'\}_{j=1}^{|K|}$ such that $|\widehat{f}(\bm{k}_1')| \ge |\widehat{f}(\bm{k}_2')| \ge \cdots$. Accordingly, the reference set is defined as $H_N' := \{\bm{k}_1', \ldots, \bm{k}_N'\}$ if $|K| \ge N$, and $H_N' := K$ otherwise. Similar to \cite[inequality ($\clubsuit$)]{pan2025universal}, we further decompose the error terms in \eqref{decomp_1} as
\begin{align}
    &\Vert A^{E,\epsilon}_m(f)-f \Vert_{L^2}^2\nonumber\\
    &=\sum_{\bm{k}\in H^{E,\epsilon}_N}|\widehat{f}^{E,\epsilon}_{N,\med}(\bm{k})-\widehat{f}(\bm{k})|^2 + \sum_{\bm{k}\in \mathbb{N}_0^s\setminus H^{E,\epsilon}_N}|\widehat{f}(\bm{k})|^2\nonumber\\
    &\le \mathrm{I}+ \mathrm{II}+\mathrm{III},\label{decomp}
\end{align}
where
\begin{align*}
  \mathrm{I} =   \sum_{\bm{k}\in H^{E,\epsilon}_N}|\widehat{f}^{E,\epsilon}_{N,\med}(\bm{k})-\widehat{f}(\bm{k})|^2,\quad \mathrm{II} =\sum_{\bm{k}\in\mathbb{N}_0^s\setminus H_N'}|\widehat{f}(\bm{k})|^2,\quad  \mathrm{III} = \sum_{\bm{k}\in H_N'\setminus H^{E,\epsilon}_N}|\widehat{f}(\bm{k})|^2.
\end{align*}
We analyze these three terms individually; their probabilistic bounds are given in Theorems \ref{thm_term1}, \ref{thm_term2}, and \ref{thm_term3}. Theorem \ref{thm_main} then follows by combining these results.

Before delving into the analysis of each term, we recall \cite[Proposition 2.2]{kunsch2019solvable}, which characterizes how the median-of-means technique suppresses the failure probability of obtaining inaccurate estimates.
\begin{lemma}\label{lemma_median_trick}
    Let $R$ be an odd natural number, and let $X_1,\ldots,X_R$ be $R$ independent and identically
distributed real random variables. Suppose that for some interval $I\subseteq R$ and $p\in(0,1/2)$ we have
\begin{equation*}
    \operatorname{Pr}\left(X_1\notin I \right)\le p.
\end{equation*}
Then 
\begin{equation*}
    \operatorname{Pr}\left(\underset{r\in 1:R}{\operatorname{med}}X_r\notin I \right)\le \frac{1}{2}\left(4p(1-p)\right)^{R/2}.
\end{equation*}
\end{lemma}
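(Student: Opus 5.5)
The plan is to reduce the statement to a binomial tail estimate. First I will use that $I$ is an interval to show that the event $\{\operatorname{med}_{r\in 1:R}X_r\notin I\}$ forces at least $(R+1)/2$ of the $X_r$ to lie outside $I$. Then I will bound the probability of that event directly.

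\textbf{Step 1 (reduction to counting failures).} Since $R$ is odd, the median is the order statistic $X_{((R+1)/2)}$. Suppose it lies outside the interval $I$. Then it lies either strictly to the left of $I$ (below every point of $I$) or strictly to the right of $I$. In the first case the $(R+1)/2$ smallest samples are all $\le X_{((R+1)/2)}$, hence also to the left of $I$, hence outside $I$. The second case is symmetric, using the $(R+1)/2$ largest samples. This is where convexity of $I$ is used, and it is the only conceptual point of the proof. It follows that
\[
\{\operatorname{med}_r X_r\notin I\}\subseteq \{S\ge (R+1)/2\},\qquad S:=\sum_{r=1}^R \bm{1}\{X_r\notin I\}.
\]

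\textbf{Step 2 (binomial tail).} By independence and identical distribution, $S\sim\mathrm{Bin}(R,p')$ with $p'=\operatorname{Pr}(X_1\notin I)\le p<1/2$. Hence
\[
\operatorname{Pr}(S\ge (R+1)/2)=\sum_{k\ge (R+1)/2}\binom{R}{k}p'^k(1-p')^{R-k}.
\]
For $k\ge R/2$ we have
\[
p'^k(1-p')^{R-k}=\bigl(p'(1-p')\bigr)^{R/2}\Bigl(\tfrac{p'}{1-p'}\Bigr)^{k-R/2}\le \bigl(p'(1-p')\bigr)^{R/2},
\]
because $p'/(1-p')\le 1$. Since $R$ is odd, symmetry of the binomial coefficients gives $\sum_{k\ge (R+1)/2}\binom{R}{k}=2^{R-1}$. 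Combining these, the tail is at most $2^{R-1}(p'(1-p'))^{R/2}=\tfrac12(4p'(1-p'))^{R/2}$.

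\textbf{Step 3 (monotonicity).} The map $x\mapsto x(1-x)$ is increasing on $[0,1/2]$, and $p'\le p<1/2$. Therefore $4p'(1-p')\le 4p(1-p)$, which gives the claimed bound $\tfrac12(4p(1-p))^{R/2}$.

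None of these steps is a real obstacle. The only care needed is in Step 1, making the "outside the interval on one side" argument precise for open, half-open or unbounded intervals. Phrasing it as "below every point of $I$" or "above every point of $I$" avoids any dependence on whether the endpoints belong to $I$.
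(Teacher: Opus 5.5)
Your proof is correct and complete. The convexity reduction to $\{S\ge (R+1)/2\}$, the termwise bound $(p')^k(1-p')^{R-k}\le (p'(1-p'))^{R/2}$ for $k\ge R/2$, the count $\sum_{k\ge (R+1)/2}\binom{R}{k}=2^{R-1}$, and the monotonicity of $x(1-x)$ on $[0,1/2]$ all check out.

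The paper gives no proof of its own; it imports the result from \cite[Proposition 2.2]{kunsch2019solvable}. Your binomial-tail argument is essentially the standard proof of that proposition, so you have supplied the omitted argument by the same route.
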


The next lemma shows that the median does not increase the error, which will be used in our analysis of $\widehat{f}^{E,\epsilon}_{N,\med}$ and $H^{E,\epsilon}_N$.

\begin{lemma}\label{lemma_median_perserve}
    Let $\epsilon>0$, $R$ be an odd natural number, and $x_1,\ldots,x_R$ be $R$ real numbers. Let $y_1,\ldots,y_R$ be $R$ real numbers satisfying $|x_r-y_r|\le\epsilon$ for all $1\le r\le R$. Then
    \begin{equation*}
        |\underset{r\in 1:R}{\operatorname{med}}x_r-\underset{r\in 1:R}{\operatorname{med}}y_r|\le \epsilon.
    \end{equation*}
\end{lemma}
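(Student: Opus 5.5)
The plan is to reduce the claim to order statistics. Sort both samples: let $x_{(1)}\le\cdots\le x_{(R)}$ and $y_{(1)}\le\cdots\le y_{(R)}$ be the increasing rearrangements. Write $R=2q+1$. Then the two medians are $x_{(q+1)}$ and $y_{(q+1)}$. The key step is to show that sorting is $1$-Lipschitz in the sup norm, namely
\begin{equation*}
|x_{(i)}-y_{(i)}|\le\epsilon\quad\text{for every }1\le i\le R.
\end{equation*}
Taking $i=q+1$ then yields the lemma.

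To prove this, fix $i$. I will show $y_{(i)}\le x_{(i)}+\epsilon$ by a counting argument. Consider the index set $S=\{r: x_r\le x_{(i)}\}$, which has at least $i$ elements. For every $r\in S$ we have $y_r\le x_r+\epsilon\le x_{(i)}+\epsilon$. So at least $i$ of the $y_r$ are at most $x_{(i)}+\epsilon$, and hence the $i$-th smallest satisfies $y_{(i)}\le x_{(i)}+\epsilon$. The hypothesis is symmetric in $x$ and $y$, so exchanging their roles gives $x_{(i)}\le y_{(i)}+\epsilon$, and the two inequalities combine to the displayed bound.

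The only point needing care is ties. The count of indices with $x_r\le x_{(i)}$ is at least $i$, not exactly $i$. This is harmless, since we only need a lower bound on the count to conclude that the $i$-th order statistic of the $y$'s lies below the threshold.

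Nothing here uses $\epsilon>0$ beyond $|x_r-y_r|\le\epsilon$, and oddness of $R$ is used only so that the median is the single order statistic $x_{(q+1)}$. No step is a real obstacle; the argument is elementary.
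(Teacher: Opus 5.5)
Your proof is correct and is essentially the paper's argument. The paper sets $z_r=x_r+\epsilon$, notes $y_r\le z_r$ for every $r$, and uses that the median is monotone under coordinatewise domination and shifts with a constant shift, giving $\operatorname{med} y_r\le \operatorname{med} x_r+\epsilon$ and symmetrically the reverse. Your counting argument just proves that monotonicity step explicitly, and does so for every order statistic rather than only the median.
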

\begin{proof}
    Denote $z_r=x_r+\epsilon$ for $1\le r\le R$. Then $y_r\le z_r$. So 
    \begin{equation*}
        \underset{r\in 1:R}{\operatorname{med}}y_r\le \underset{r\in 1:R}{\operatorname{med}}z_r=\underset{r\in 1:R}{\operatorname{med}}x_r+\epsilon.
    \end{equation*}
    Similarly, we have 
    \[\underset{r\in 1:R}{\operatorname{med}}y_r\ge \underset{r\in 1:R}{\operatorname{med}}x_r-\epsilon.\]
    Hence,
    \[    |\underset{r\in 1:R}{\operatorname{med}}x_r-\underset{r\in 1:R}{\operatorname{med}}y_r|\le \epsilon.\qedhere\] 
\end{proof}

\subsection{Analysis of the term $\mathrm{I}$ in \eqref{decomp}}
To begin with, we recall a technical lemma from \cite[Lemma 1]{pan2025automatic}.
\begin{lemma}\label{lemma_set_c_r}
    For $1\le r\le R$, denote 
    \begin{equation}\label{set_C_r}
        \bm{C}_r^\perp:=\left\{ \el\in\mathbb{N}_0^s:\sum_{j=1}^{s}\Vec{\ell}_j^\top C_{jr}=\bm{0}\mod2\right\}.
    \end{equation}
    Then for $\el\ne \bm{0}$, we have
    \[\operatorname{Pr}(\el\in\bm{C}_r^\perp )=2^{-m}=\frac{1}{N}.\]
\end{lemma}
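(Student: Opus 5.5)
The plan is to reduce the event $\el\in\bm{C}_r^\perp$ to a statement about a single uniformly distributed row vector. Fix $\el\neq\bm{0}$ and write
\[
S:=\sum_{j=1}^{s}\Vec{\ell}_j^\top C_{jr}\in\{0,1\}^{1\times m}\pmod 2 .
\]
Since $\el\neq\bm{0}$, there is an index $j_0$ with $\ell_{j_0}\neq 0$. Then $\Vec{\ell}_{j_0}$ has at least one nonzero entry, say in position $t$; only finitely many entries of $\Vec{\ell}_{j_0}$ are nonzero, so $S$ is a well-defined finite sum.

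Next, condition on every entry of $C_{1r},\ldots,C_{sr}$ except the $t$-th row of $C_{j_0r}$. Write $\bm{c}\in\{0,1\}^{1\times m}$ for that row. Then
\[
S=\bm{c}+\bm{b}\pmod 2,
\]
where $\bm{b}$ is a fixed vector under the conditioning. This uses that the coefficient of $\bm{c}$ in $S$ is $\ell_{j_0,t}=1$, and that $\bm{c}$ appears nowhere else in $S$.

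Under the complete random design, the entries of $\bm{c}$ are i.i.d. $\mathbb{U}(\{0,1\})$ and independent of the conditioned entries. Hence $\bm{c}$ is uniform on $\{0,1\}^{1\times m}$. Adding the fixed vector $\bm{b}$ modulo $2$ is a bijection of $\{0,1\}^{1\times m}$, so $S$ is also conditionally uniform, and
\[
\operatorname{Pr}(S=\bm{0}\mid\bm{b})=2^{-m}.
\]
Taking expectations over the conditioning gives $\operatorname{Pr}(\el\in\bm{C}_r^\perp)=2^{-m}=1/N$.

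The only delicate point is that the matrices have infinitely many rows, so the conditioning must be justified. I would handle this by noting that $S$ depends on only finitely many rows, namely those indexed by the nonzero digits of the $\ell_j$. The event is therefore determined by a finite family of independent uniform bits, and the conditioning argument above runs on that finite family.
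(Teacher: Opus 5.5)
Your proof is correct and complete. Isolating a row $\bm{c}$ of $C_{j_0r}$ indexed by a nonzero binary digit of $\ell_{j_0}$ shows that $\sum_{j}\Vec{\ell}_j^\top C_{jr}$ is exactly uniform on $\{0,1\}^{1\times m}$, which gives the probability $2^{-m}$. Your remark that only finitely many rows are involved correctly handles the infinite-row matrices. The paper gives no proof of this lemma; it simply recalls it from \cite[Lemma 1]{pan2025automatic}. Your argument is therefore a self-contained proof of the imported fact, in the slightly stronger form of exact uniformity of the sum, rather than a different route from one presented in the paper.
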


Combining this lemma and Lemma \ref{lemma_median_trick}, we obtain the probabilistic bound for $\widehat{f}_{N,\med}$.
\begin{lemma}\label{median_first}
    For $\bm{k}\in K$ and $\delta\in (0,\frac{1}{4})$, 
    \begin{equation*}
        \operatorname{Pr}\left(|\widehat{f}_{N,\med}(\bm{k})-\widehat{f}(\bm{k})|^2>\frac{\Lambda_2}{\delta N}\right)\le \frac{1}{2}(8\delta(1-2\delta))^{R/2},
    \end{equation*}
    where $\Lambda_2$ is defined in (\ref{Lambda_m}).
\end{lemma}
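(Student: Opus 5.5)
The plan is to bound the failure probability of a single estimate $\widehat{f}_{N,r}(\bsk)$ and then amplify it with the median trick of Lemma~\ref{lemma_median_trick}, applied with $p=2\delta<1/2$ and the interval $I=[\widehat{f}(\bsk)-\sqrt{\Lambda_2/(\delta N)},\widehat{f}(\bsk)+\sqrt{\Lambda_2/(\delta N)}]$. Since the $R$ replicates use independent $(\bm{C}_r,\bm{D}_r)$, the estimates $\widehat{f}_{N,r}(\bsk)$ are i.i.d. Lemma~\ref{lemma_median_trick} then gives the bound $\frac12(4\cdot 2\delta(1-2\delta))^{R/2}=\frac12(8\delta(1-2\delta))^{R/2}$. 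So everything reduces to showing $\Pr(|\widehat{f}_{N,1}(\bsk)-\widehat{f}(\bsk)|^2>\Lambda_2/(\delta N))\le 2\delta$.

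For a single replicate I would start from Lemma~\ref{lemma_error_g}, which applies because Lemma~\ref{lemma_psi} gives absolute summability of the Walsh coefficients. It yields
\[
\widehat{f}_{N,1}(\bsk)-\widehat{f}(\bsk)=\sum_{\el\ne\bszero}Z(\el)W(\el)\widehat{f}(\bsk\oplus\el).
\]
I would split this sum according to whether $\bsk\oplus\el$ lies in $K_{m,\bsgamma}$ or not; write the two pieces as $S_{\mathrm{in}}$ and $S_{\mathrm{out}}$.

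\textbf{Outer piece.} Conditioning on $\bm{C}$, the random shift makes the signs $W(\el)$, $\el\in\bm{C}^\perp$, pairwise orthogonal: for $\el\ne\el'$ we have $\mathbb{E}[W(\el)W(\el')]=0$. Hence
\[
\mathbb{E}|S_{\mathrm{out}}|^2=\sum_{\el\ne\bszero}\Pr(\el\in\bm{C}^\perp)\,|\widehat{f}(\bsk\oplus\el)|^2\bm{1}\{\bsk\oplus\el\notin K_{m,\bsgamma}\}\le \frac{\Lambda_2}{N},
\]
using Lemma~\ref{lemma_set_c_r}. Markov's inequality then gives $\Pr(|S_{\mathrm{out}}|^2>\Lambda_2/(\delta N))\le\delta$.

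\textbf{Inner piece.} $S_{\mathrm{in}}$ vanishes unless some $\el\ne\bszero$ with $\bsk\oplus\el\in K_{m,\bsgamma}$ lies in $\bm{C}^\perp$. A union bound with Lemma~\ref{lemma_set_c_r} and Lemma~\ref{card} gives $\Pr(S_{\mathrm{in}}\ne0)\le |K_{m,\bsgamma}|/N\le\delta$.

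Combining the two pieces, with probability at least $1-2\delta$ the error equals $S_{\mathrm{out}}$ and satisfies the required bound, which finishes the argument.

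The main subtlety is the orthogonality step. The series is infinite and $Z$ and $W$ are coupled through $\bm{C}$, so computing the second moment requires justifying the interchange of expectation and summation. That is legitimate by absolute summability (Lemma~\ref{lemma_psi}) together with dominated convergence. It also requires checking that $W(\el)W(\el')=W(\el\oplus\el')$ has zero mean for $\el\ne\el'$, which holds since the shift bits are uniform. The threshold split $2\delta=\delta+\delta$ is exactly what produces the constant $8\delta(1-2\delta)$.
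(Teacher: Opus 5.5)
Your proposal is correct and follows essentially the same route as the paper: a per-replicate failure probability of $2\delta$, obtained as $\delta$ from aliasing into $K_{m,\bsgamma}$ (union bound via Lemma~\ref{lemma_set_c_r} and Lemma~\ref{card}) plus $\delta$ from Markov's inequality on the shift-averaged second moment of the remaining aliasing terms, followed by Lemma~\ref{lemma_median_trick} with $p=2\delta$. The only cosmetic difference is that you split the error as $S_{\mathrm{in}}+S_{\mathrm{out}}$, whereas the paper multiplies by the indicator of the event $(\bsk\oplus\bm{C}_r^\perp)\cap K_{m,\bsgamma}\subseteq\{\bsk\}$; on that event the error equals your $S_{\mathrm{out}}$.
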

\begin{proof}
    For each $r\in 1:R$, by Lemma \ref{lemma_error_g}, together with the pairwise independence of $W(\el)$,  and the independence between $\bm{C}_r$ and $\bm{D}_r$, we have
    \begin{equation}\label{E_D_r}
        \mathbb{E}_{\bm{D}_r}\left[|\widehat{f}_{N,r}(\bm{k})-\widehat{f}(\bm{k})|^2\right]=\sum_{\el\in \bm{C}_r^\perp\setminus\{\bm{0}\}}|\widehat{f}(\bm{k}\oplus\el)|^2,
    \end{equation}
    where $\bm{C}_r^\perp$ is defined in (\ref{set_C_r}). Conditioned on the event $(\bm{k}\oplus \bm{C}_r^\perp)\cap K_{m,\bm{\gamma}}\subseteq\{\bm{k}\}$,
    \begin{align}
        &\mathbb{E}_{\bm{C}_r}\left[\bm{1}\left\{(\bm{k}\oplus \bm{C}_r^\perp)\cap K_{m,\bm{\gamma}}\subseteq\{\bm{k}\} \right\}\sum_{\el\in \bm{C}_r^\perp\setminus\{\bm{0}\}}|\widehat{f}(\bm{k}\oplus\el)|^2 \right]\nonumber\\
        &=\mathbb{E}_{\bm{C}_r}\left[\bm{1}\left\{(\bm{k}\oplus \bm{C}_r^\perp)\cap K_{m,\bm{\gamma}}\subseteq\{\bm{k}\} \right\}\sum_{\bm{k}'\in \mathbb{N}_0^s\setminus\{\bm{k}\} }|\widehat{f}(\bm{k}')|^2\bm{1}\left\{\bm{k}'\oplus\bm{k}\in \bm{C}_r^\perp \right\}  \right]\nonumber\\
        &\le \sum_{\bm{k}'\in\mathbb{N}_0^s\setminus (K_{m,\bm{\gamma}}\cup\{\bm{k}\} )}\operatorname{Pr}(\bm{k}'\oplus \bm{k}\in \bm{C}_r^\perp)|\widehat{f}(\bm{k}')|^2\le\frac{\Lambda_2}{N},\label{E_C_r_L2}
    \end{align}
    where in the first equality we set $\bm{k}'=\bm{k}\oplus\el \in \mathbb{N}_0^s\setminus\{\bm{k}\}$, in the last inequality we used Lemma \ref{lemma_set_c_r}, and in the second inequality we noted that if $\bm{k}'\in K_{m,\bm{\gamma}}$ and $\el=\bm{k}'\oplus\bm{k}\in \bm{C}_r^\perp\setminus\{\bm{0}\}$, then 
    \[\bm{k}\ne\bm{k}'=\bm{k}\oplus\el\in (\bm{k}\oplus \bm{C}_r^\perp)\cap K_{m,\bm{\gamma}} .\]
    By Lemma \ref{card} and Lemma \ref{lemma_set_c_r},
    \begin{align}
        \operatorname{Pr}\left((\bm{k}\oplus \bm{C}_r^\perp)\cap K_{m,\bm{\gamma}}\not\subseteq\{\bm{k}\}\right)&=\operatorname{Pr}\left(\bigcup_{\bm{k}'\in K_{m,\bm{\gamma}}}\{\bm{k}'\oplus \bm{k}\in \bm{C}_r^\perp\setminus\{\bm{0}\} \}  \right)\nonumber\\
        &\le \sum_{\bm{k}'\in K_{m,\bm{\gamma}}}\operatorname{Pr}\left(\bm{k}'\oplus \bm{k}\in \bm{C}_r^\perp\setminus\{\bm{0}\} \right)\nonumber\\
        &\le \frac{|K_{m,\bm{\gamma}}|}{N}\le \delta.\label{Pr_set}
    \end{align}
    Hence, by Markov’s inequality
    \begin{align}
        &\operatorname{Pr}\left( |\widehat{f}_{N,r}(\bm{k})-\widehat{f}(\bm{k})|^2>\frac{\Lambda_2}{\delta N}\right)\nonumber\\
        &\le \operatorname{Pr}\left((\bm{k}\oplus \bm{C}_r^\perp)\cap K_{m,\bm{\gamma}}\not\subseteq\{\bm{k}\}\right)\nonumber\\
        &\quad+\operatorname{Pr}\left(\frac{\delta N}{\Lambda_2}|\widehat{f}_{N,r}(\bm{k})-\widehat{f}(\bm{k})|^2 \bm{1}\left\{(\bm{k}\oplus \bm{C}_r^\perp)\cap K_{m,\bm{\gamma}}\subseteq\{\bm{k}\} \right\}> 1      \right)\nonumber\\
        &\le \operatorname{Pr}\left((\bm{k}\oplus \bm{C}_r^\perp)\cap K_{m,\bm{\gamma}}\not\subseteq\{\bm{k}\}\right)\nonumber\\
        &\quad+ \frac{\delta N}{\Lambda_2}\mathbb{E}_{\bm{C}_r}\left[\bm{1}\left\{(\bm{k}\oplus \bm{C}_r^\perp)\cap K_{m,\bm{\gamma}}\subseteq\{\bm{k}\} \right\}\sum_{\el\in \bm{C}_r^\perp\setminus\{\bm{0}\}}|\widehat{f}(\bm{k}\oplus\el)|^2 \right]\nonumber\\
        &\le 2\delta,\label{Pr_median}
    \end{align}
    where the second inequality also used (\ref{E_D_r}).
    The conclusion follows from Lemma~\ref{lemma_median_trick} and the observation that $\widehat{f}_{N,r}(\bm{k})\in\mathbb{R}$.
\end{proof}

We now show a bound on the first term $\mathrm{I}$ in (\ref{decomp}).
\begin{theorem}\label{thm_term1}
    Let $\delta\in(0,\frac{1}{4}),  N=2^m$ and let $R$ be an odd natural number satisfying
    \begin{equation*}
        \delta_1:=\frac{N}{2}(8\delta(1-2\delta))^{R/2}<1.
    \end{equation*}
    Assume $K\supseteq K_{m,\bm{\gamma}}$ and the condition (\ref{condition_E}) holds.
    Then, for $\theta\in(0,1)$ and any $f$ with $\Vert f\Vert_{s,\alpha,\lambda}<\infty$, with probability at least $1-\delta_1$ we have
    \begin{equation*}
        \sum_{\bm{k}\in H^{E,\epsilon}_N}|\widehat{f}^{E,\epsilon}_{N,\med}(\bm{k})-\widehat{f}(\bm{k})|^2\le \Upsilon_{\alpha,\lambda,\beta, \bm{\gamma},\delta,\theta}\frac{2\Gamma_{\alpha,\lambda,\beta,\bm{\gamma}}(m)^{2\theta(\alpha+\lambda)}}{\delta N^{2\theta(\alpha+\lambda)}}\Vert f\Vert_{s,\alpha,\lambda}^2+2N\epsilon_1^2,
    \end{equation*}
    where $\epsilon_1$, $\Gamma_{\alpha,\lambda,\beta,\bm{\gamma}}(m)$ and $\Upsilon_{\alpha,\lambda,\beta, \bm{\gamma},\delta,\theta}$ are defined in (\ref{constant_ep1}), (\ref{Gamma(m)}) and (\ref{constant_Gamma}), respectively.
\end{theorem}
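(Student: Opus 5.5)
The plan is to reduce the sum over the random set $H^{E,\epsilon}_N$ to at most $N$ per-index events. Each event is controlled by Lemma~\ref{median_first}, and the finite-precision perturbation is handled by Theorem~\ref{thm_dis} together with Lemma~\ref{lemma_median_perserve}.

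\textbf{Step 1 (finite precision).} Under \eqref{condition_E}, Theorem~\ref{thm_dis} gives $|\widehat f_{N,r}(\bsk)-\widehat f^{E,\epsilon}_{N,r}(\bsk)|\le\epsilon_1$ for every $r$ and every $\bsk\in K$. Lemma~\ref{lemma_median_perserve} then gives $|\widehat f_{N,\med}(\bsk)-\widehat f^{E,\epsilon}_{N,\med}(\bsk)|\le \epsilon_1$. Using $(a+b)^2\le 2a^2+2b^2$, I get for every $\bsk\in H^{E,\epsilon}_N$
\[
|\widehat f^{E,\epsilon}_{N,\med}(\bsk)-\widehat f(\bsk)|^2\le 2|\widehat f_{N,\med}(\bsk)-\widehat f(\bsk)|^2+2\epsilon_1^2 .
\]
Summing over the at most $N$ elements of $H^{E,\epsilon}_N$ produces the term $2N\epsilon_1^2$. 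It therefore suffices to show that, with probability at least $1-\delta_1$, every $\bsk\in H^{E,\epsilon}_N$ satisfies $|\widehat f_{N,\med}(\bsk)-\widehat f(\bsk)|^2\le \Lambda_2/(\delta N)$. Multiplying this by $2N$ and inserting the bound on $\Lambda_2$ from Lemma~\ref{lemma_lambda} (with $2^m=N$) gives exactly the first term of the claimed bound.

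\textbf{Step 2 (per-index event).} Lemma~\ref{median_first} gives failure probability at most $\tfrac12(8\delta(1-2\delta))^{R/2}$ for each fixed $\bsk$. Its proof uses only the event $(\bsk\oplus\bm C_r^\perp)\cap K_{m,\bm\gamma}\subseteq\{\bsk\}$, Lemma~\ref{lemma_set_c_r} and Lemma~\ref{card}. Hence the bound holds for any deterministic $\bsk\in\mathbb N_0^s$, not only for $\bsk\in K$.

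\textbf{Step 3 (union over $N$ positions).} I enumerate $H^{E,\epsilon}_N$ by rank, $\bsk_{(1)},\dots,\bsk_{(N)}$, where $\bsk_{(j)}$ is the element with the $j$-th largest value of $\operatorname{med}_r|\widehat f^{E,\epsilon}_{N,r}|$, ties broken by a fixed ordering of $K$. The idea is to apply the failure bound of Step 2 to each position $j$ and then take a union bound over the $N$ positions. This yields total failure probability at most $\tfrac N2(8\delta(1-2\delta))^{R/2}=\delta_1$, as stated.

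\textbf{Main obstacle.} The index $\bsk_{(j)}$ is selected by the same samples used to estimate it, so Lemma~\ref{median_first} cannot be applied to it verbatim. I would first try to show that the selection is determined by the absolute values $|\widehat f^{E,\epsilon}_{N,r}|$. The signed error of $\widehat f_{N,r}(\bsk)$ has, by \eqref{E_D_r}, a conditional second moment that depends only on $\bm C_r$. The aim is to transfer the Markov step \eqref{Pr_median} to the selected index via the digital-shift randomization. Concretely, I would condition on $\bm C_r$ and on the values $|\widehat f_{N,r}|$, and check that the bound $2\delta$ on each single-replicate failure probability survives this conditioning.

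If this transfer fails, the fallback is to union-bound over all $\bsk\in K$. That argument is fully rigorous but gives the weaker failure probability $\tfrac{|K|}2(8\delta(1-2\delta))^{R/2}$. That quantity is still dominated by the first summand of \eqref{condition_R}, so Theorem~\ref{thm_main} would be unaffected.
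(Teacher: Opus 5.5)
Your Steps 1 and 2 coincide with the paper's proof: Theorem~\ref{thm_dis} with Lemma~\ref{lemma_median_perserve}, the split $(a+b)^2\le 2a^2+2b^2$, Lemma~\ref{median_first} and Lemma~\ref{lemma_lambda}. Step 3 also coincides with the paper, and that is where the trouble is. Write $F_{\bsk}=\{|\widehat f_{N,\med}(\bsk)-\widehat f(\bsk)|^2>\Lambda_2/(\delta N)\}$. The paper simply asserts $\operatorname{Pr}(\exists\bsk\in H^{E,\epsilon}_N:F_{\bsk})\le\frac12(8\delta(1-2\delta))^{R/2}|H^{E,\epsilon}_N|\le\delta_1$.

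Your objection to this is correct, and the paper contains no idea that removes it. For a sample-dependent set one only has $\operatorname{Pr}(\exists\bsk\in H^{E,\epsilon}_N:F_{\bsk})\le\sum_{\bsk\in K}\operatorname{Pr}(F_{\bsk}\cap\{\bsk\in H^{E,\epsilon}_N\})$. This reduces to $N\max_{\bsk}\operatorname{Pr}(F_{\bsk})$ only when $F_{\bsk}$ is not positively correlated with selection, and here it is.

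Your proposed transfer by conditioning on $\bm C_r$ and on the absolute values cannot work. Put $t=(\Lambda_2/(\delta N))^{1/2}$. Suppose $|\widehat f(\bsk)|\le t$ and $|\widehat f_{N,r}(\bsk)|>2t$ for all $r$. The signed median is one of the $\widehat f_{N,r}(\bsk)$, so $F_{\bsk}$ occurs for every sign pattern. The conditional failure probability is therefore $1$, and a large $\med_r|\widehat f_{N,r}(\bsk)|$ is exactly what the ranking rewards. Hence neither your proposal nor the paper's argument proves the statement with $\delta_1=\frac N2(8\delta(1-2\delta))^{R/2}$. What your fallback proves rigorously is the statement with $|K|$ in place of $N$.

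Your claim that Theorem~\ref{thm_main} is then unaffected is not right as stated. The $\delta_2$ of Theorem~\ref{thm_term3} already spends $\frac{|K|}{2}(8\delta(1-2\delta))^{R/2}$ through Corollary~\ref{cor_K_minus_H}. Adding your fallback therefore turns the coefficient $(|K|+N)/2$ in \eqref{condition_R} into $|K|$. This is harmless for the rates, since $R$ grows only by an additive constant, but it is a change.

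A sharper repair reuses that same event, splitting $H^{E,\epsilon}_N$ along the \emph{deterministic} set $H_N'$ of Appendix~\ref{appendix_main}:
\begin{itemize}
\item For $\bsk\in H_N'$, which are at most $N$ fixed indices, the union bound with Lemma~\ref{median_first} legitimately costs $\frac N2(8\delta(1-2\delta))^{R/2}$.
\item For $\bsk\in H^{E,\epsilon}_N\setminus H_N'\subseteq K\setminus H_N'$, work on the event of Corollary~\ref{cor_K_minus_H}. Use the elementary inequality $|\med_r x_r|\le\med_r|x_r|$: if the median $a$ is $\ge 0$, at least $(R+1)/2$ of the $x_r$ are $\ge a$, and the case $a<0$ is symmetric. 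Combine it with $|\widehat f(\bsk)|^2\le\Lambda_2/((1-\delta)N)$ to get $|\widehat f_{N,\med}(\bsk)-\widehat f(\bsk)|^2\le 4\Lambda_2/(\delta(1-\delta)N)$.
\end{itemize}
This recovers \eqref{condition_R} exactly, at the price of a slightly larger constant. It still does not give the stand-alone probability $1-\delta_1$ claimed in the statement.
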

\begin{proof}
    According to (\ref{assumption_ep}) and Lemma \ref{lemma_median_perserve}, for any $\bm{k}\in K$, we have
    \[|\widehat{f}_{N,\med}(\bm{k})-\widehat{f}^{E,\epsilon}_{N,\med}(\bm{k})|\le \epsilon_1.\]
    By Lemma \ref{median_first}, 
    \[
    \operatorname{Pr}\left(\exists\bm{k}\in H^{E,\epsilon}_N,\text{ s.t. }|\widehat{f}_{N,\med}(\bm{k})-\widehat{f}(\bm{k})|^2>\frac{\Lambda_2}{\delta N} \right)\le \frac{1}{2}(8\delta(1-2\delta))^{R/2}|H^{E,\epsilon}_N|\le\delta_1.
    \]
    Hence, with probability at least $1-\delta_1$ we have $|\widehat{f}_{N,\med}(\bm{k})-\widehat{f}(\bm{k})|^2\le\Lambda_2/(\delta N) $ for all $\bm{k}\in H^{E,\epsilon}_N$. It then follows from Lemma \ref{lemma_lambda} and Cauchy–Schwarz inequality that
    \begin{align*}
        &\sum_{\bm{k}\in H^{E,\epsilon}_N}|\widehat{f}^{E,\epsilon}_{N,\med}(\bm{k})-\widehat{f}(\bm{k})|^2\\
        &\le 2\sum_{\bm{k}\in H^{E,\epsilon}_N}|\widehat{f}_{N,\med}(\bm{k})-\widehat{f}(\bm{k})|^2+2\sum_{\bm{k}\in H^{E,\epsilon}_N}|\widehat{f}_{N,\med}(\bm{k})-\widehat{f}^{E,\epsilon}_{N,\med}(\bm{k})|^2\\
        &\le \frac{2\Lambda_2}{\delta}+2N\epsilon_1^2\\
        &\le \Upsilon_{\alpha,\lambda,\beta, \bm{\gamma},\delta,\theta}\frac{2\Gamma_{\alpha,\lambda,\beta,\bm{\gamma}}(m)^{2\theta(\alpha+\lambda)}}{\delta N^{2\theta(\alpha+\lambda)}}\Vert f\Vert_{s,\alpha,\lambda}^2+2N\epsilon_1^2. \qedhere
    \end{align*}
\end{proof}

\subsection{Analysis of the term $\mathrm{II}$ in \eqref{decomp}} The upper bound for the second term $\mathrm{II}$ in (\ref{decomp}) can be obtained by definition of $H_N'$.
\begin{theorem}\label{thm_term2}
    Assume $K\supseteq K_{m,\bm{\gamma}}$.
    For $\theta\in(0,1)$ and $f$ with $\Vert f\Vert_{s,\alpha,\lambda}<\infty$, we have 
    \begin{equation*}
        \sum_{\bm{k}\in\mathbb{N}_0^s\setminus H_N' }|\widehat{f}(\bm{k})|^2\le \Upsilon_{\alpha,\lambda,\beta, \bm{\gamma},\delta,\theta}\frac{\Gamma_{\alpha,\lambda,\beta,\bm{\gamma}}(m)^{2\theta(\alpha+\lambda)}}{ N^{2\theta(\alpha+\lambda)}}\Vert f\Vert_{s,\alpha,\lambda}^2,
    \end{equation*}
    where $\Gamma_{\alpha,\lambda,\beta,\bm{\gamma}}(m)$ and $\Upsilon_{\alpha,\lambda,\beta, \bm{\gamma},\delta,\theta}$ are defined in (\ref{Gamma(m)}) and (\ref{constant_Gamma}), respectively.
\end{theorem}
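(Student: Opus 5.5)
The plan is to compare the tail outside $H_N'$ with the tail outside $K_{m,\bm{\gamma}}$, and then invoke Lemma~\ref{lemma_lambda}. Concretely, I aim to show
\[
\sum_{\bm{k}\in\mathbb{N}_0^s\setminus H_N'}|\widehat{f}(\bm{k})|^2\le \sum_{\bm{k}\in\mathbb{N}_0^s\setminus K_{m,\bm{\gamma}}}|\widehat{f}(\bm{k})|^2=\Lambda_2 .
\]
Since $N^{-2\theta(\alpha+\lambda)}=2^{-2\theta(\alpha+\lambda)m}$, the stated bound then follows at once from Lemma~\ref{lemma_lambda}.

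For the comparison, I split into two cases according to $|K|$.

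\emph{Case $|K|<N$.} Here $H_N'=K\supseteq K_{m,\bm{\gamma}}$, so $\mathbb{N}_0^s\setminus H_N'\subseteq \mathbb{N}_0^s\setminus K_{m,\bm{\gamma}}$. The inequality is then immediate.

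\emph{Case $|K|\ge N$.} Here $H_N'$ consists of the $N$ elements of $K$ with the largest $|\widehat{f}|$. Write
\[
\mathbb{N}_0^s\setminus H_N' = (\mathbb{N}_0^s\setminus K)\cup(K\setminus H_N').
\]
The first piece lies inside $\mathbb{N}_0^s\setminus K_{m,\bm{\gamma}}$, so it causes no trouble. For the second piece, Lemma~\ref{card} gives $|K_{m,\bm{\gamma}}|\le \delta N\le N=|H_N'|$. Hence
\[
|K_{m,\bm{\gamma}}\setminus H_N'|\le |H_N'\setminus K_{m,\bm{\gamma}}|,
\]
so there is an injection $\phi:K_{m,\bm{\gamma}}\setminus H_N'\to H_N'\setminus K_{m,\bm{\gamma}}$. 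Every element of $H_N'$ has $|\widehat{f}|$ at least as large as every element of $K\setminus H_N'$. Therefore $|\widehat{f}(\bm{k})|\le|\widehat{f}(\phi(\bm{k}))|$ for each $\bm{k}\in K_{m,\bm{\gamma}}\setminus H_N'$.

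Using this injection, I would write
\[
\sum_{K\setminus H_N'}|\widehat{f}|^2=\sum_{(K\setminus H_N')\setminus K_{m,\bm{\gamma}}}|\widehat{f}|^2+\sum_{K_{m,\bm{\gamma}}\setminus H_N'}|\widehat{f}|^2 .
\]
In the second sum I replace each $\bm{k}$ by $\phi(\bm{k})\in H_N'\setminus K_{m,\bm{\gamma}}$, which does not decrease the terms. All the resulting indices lie in $K\setminus K_{m,\bm{\gamma}}$ and are distinct, because the two index sets are disjoint and $\phi$ is injective. Combining with the first piece gives $\mathrm{II}\le\Lambda_2$.

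The only real point is this exchange argument in the case $|K|\ge N$; everything else is bookkeeping. I do not expect any genuine obstacle beyond checking disjointness for the injection. Only the fact that $\delta$ is small enough that $|K_{m,\bm{\gamma}}|\le N$ (from Lemma~\ref{card}) is needed.
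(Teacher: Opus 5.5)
Your proposal is correct and matches the paper's proof. It uses the same split into $|K|<N$ and $|K|\ge N$, the same reduction to $\Lambda_2$ via $K\supseteq K_{m,\bm{\gamma}}$ and $|K_{m,\bm{\gamma}}|\le \delta N$, and then Lemma~\ref{lemma_lambda}; your injection argument simply spells out the exchange step that the paper summarizes as ``by the definition of $H_N'$''.
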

\begin{proof}
If $|K|<N$, then $K_{m,\bm{\gamma}}\subseteq K=H_N'$ and we have
\[ \sum_{\bm{k}\in\mathbb{N}_0^s\setminus H_N' }|\widehat{f}(\bm{k})|^2\le \sum_{\bm{k}\in\mathbb{N}_0^s\setminus K_{m,\bm{\gamma}} }|\widehat{f}(\bm{k})|^2. \]
If $|K|\ge N$, then 
$|K_{m,\bm{\gamma}}|\le \delta N<N=|H_N'|$. Since $K_{m,\bm{\gamma}}\subseteq K$, by the definition of $H_N'$ we also have 
\[ \sum_{\bm{k}\in\mathbb{N}_0^s\setminus H_N' }|\widehat{f}(\bm{k})|^2\le \sum_{\bm{k}\in\mathbb{N}_0^s\setminus K_{m,\bm{\gamma}} }|\widehat{f}(\bm{k})|^2. \]
The conclusion follows from Lemma \ref{lemma_lambda}.
\end{proof}

\subsection{Analysis of the term $\mathrm{III}$ in \eqref{decomp}}
Finally, we consider the third term $\mathrm{III}$ in (\ref{decomp}). Note that the case where $|K|<N$ is trivial, as $H_N'=K=H^{E,\epsilon}_N$ in this scenario. Therefore, the remainder of this section focuses on the case $|K|\ge N$.
Similar to \cite[Section 4.3]{pan2025universal}, we will use the fourth moment of $\widehat{f}_{N,r}(\bm{k})$ to show that
with high probability, all $\bm{k}\in H_N'\setminus H^{E,\epsilon}_N$ corresponds to small values of $|\widehat{f}(\bm{k})|$.

We begin by computing the second and fourth moments of  $\widehat{f}_{N,r}(\bm{k})$ and providing an upper bound for the variance of $|\widehat{f}_{N,r}(\bm{k})|^2$  under the expectation over $\bm{D}_r$.
\begin{lemma}\label{2_4_matrix}
For any $r\in 1:R$ and $\bm{k}\in \mathbb{N}_0^s$, we have
    \begin{align*}
        \mathbb{E}_{\bm{D}_r}\left[|\widehat{f}_{N,r}(\bm{k})|^2\right]&=\sum_{\el\in \bm{C}_r^\perp}|\widehat{f}(\bm{k}\oplus\el)|^2,\\
        \mathbb{E}_{\bm{D}_r}\left[|\widehat{f}_{N,r}(\bm{k})|^4\right]&=\sum_{\el\in \bm{C}_r^\perp}\left|\sum_{\el'\in \bm{C}_r^\perp}\widehat{f}(\bm{k}\oplus\el')\widehat{f}(\bm{k}\oplus\el\oplus\el') \right|^2.
    \end{align*}
\end{lemma}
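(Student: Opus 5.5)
The plan is to start from the pointwise representation in Lemma~\ref{lemma_error_g}. With $g=f\,\wal{\bm{k}}$, we have $\widehat{f}_{N,r}(\bm{k})=\widehat{\mu}_{\bm{C}_r,\bm{D}_r}(g)$, so
\[
\widehat{f}_{N,r}(\bm{k})=\sum_{\el\in\mathbb{N}_0^s}Z(\el)W(\el)\widehat{f}(\bm{k}\oplus\el)=\sum_{\el\in\bm{C}_r^\perp}W_r(\el)\widehat{f}(\bm{k}\oplus\el).
\]
Here $W_r(\el)=(-1)^{\sum_j\vec{\ell}_j^\top\vec{D}_{j,r}}$, and the $\el=\bm 0$ term (where $Z=W=1$) has been merged in. The series converges absolutely because $\sum_{\el}|\widehat f(\el)|<\infty$, and $|W_r|=1$. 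This justifies all rearrangements and exchanges of expectation and summation below: products of up to four such absolutely convergent series are absolutely convergent, and Fubini applies.

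The key structural facts concern the random signs. Since $\vec D_{j,r}$ has i.i.d.\ uniform bits, $W_r(\el)W_r(\el')=W_r(\el\oplus\el')$. Moreover, $\mathbb{E}_{\bm D_r}[W_r(\el)]=\bm 1\{\el=\bm 0\}$, because a nonzero $\el$ has some bit $\ell_{j,t}=1$ and the corresponding uniform bit $D_{j,t}$ makes the sign symmetric. Also, $\bm C_r^\perp$ is closed under $\oplus$: it is the kernel of a linear map over $\mathbb F_2$, since $\vec{(\ell\oplus\ell')}=\vec\ell+\vec\ell'\bmod 2$.

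For the second moment, expand $|\widehat f_{N,r}(\bm k)|^2$ as a double sum over $\el,\el'\in\bm C_r^\perp$ with weight $W_r(\el\oplus\el')$. Taking $\mathbb E_{\bm D_r}$ keeps only the diagonal $\el=\el'$, which gives $\sum_{\el\in\bm C_r^\perp}|\widehat f(\bm k\oplus\el)|^2$.

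For the fourth moment, the cleanest route is to rewrite the square first. Write
\[
|\widehat f_{N,r}(\bm k)|^2=\sum_{\el\in\bm C_r^\perp}W_r(\el)\,S(\el),\qquad S(\el):=\sum_{\el'\in\bm C_r^\perp}\widehat f(\bm k\oplus\el')\widehat f(\bm k\oplus\el\oplus\el').
\]
This follows by substituting $\el''=\el\oplus\el'$ in the double sum and using the closure of $\bm C_r^\perp$ together with the bijectivity of $\el'\mapsto\el\oplus\el'$ on it. Then $|\widehat f_{N,r}(\bm k)|^4=\bigl(\sum_{\el}W_r(\el)S(\el)\bigr)^2$. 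The same orthogonality computation as in the second moment, with $S(\el)$ in place of $\widehat f(\bm k\oplus\el)$, yields $\sum_{\el\in\bm C_r^\perp}|S(\el)|^2$, which is the claimed formula. The $S(\el)$ are real and depend only on $\bm C_r$, which is independent of $\bm D_r$, so they act as constants under $\mathbb E_{\bm D_r}$.

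The only delicate point I expect is the justification of absolute convergence when regrouping the quadruple sum. This is handled by observing that $\sum_{\el}|S(\el)|\le(\sum_{\el}|\widehat f(\el)|)^2<\infty$, so the series for $|\widehat f_{N,r}|^2$ in terms of $S$ converges absolutely. Squaring it and taking expectations term by term is then legitimate by dominated convergence.
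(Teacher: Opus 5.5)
Your proof is correct and follows essentially the same route as the paper. The paper takes the representation $\widehat{f}_{N,r}(\bm{k})=\sum_{\el\in\bm{C}_r^\perp}W(\el)\widehat{f}(\bm{k}\oplus\el)$ from Lemma~\ref{lemma_error_g} and repeats the moment computation of \cite[Lemma 4.7]{pan2025universal}; that computation rests on the same ingredients you use, namely orthogonality of the random-shift signs $W(\el)$ and closure of $\bm{C}_r^\perp$ under $\oplus$. Your device of first writing $|\widehat{f}_{N,r}(\bm{k})|^2$ as $\sum_{\el\in\bm{C}_r^\perp}W(\el)S(\el)$ and then reapplying the second-moment identity is a tidy way to organize the fourth-moment step, and your absolute-convergence justification is adequate.
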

\begin{proof}
    The proof closely parallels that of \cite[Lemma 4.7]{pan2025universal}, with the only modification being that we invoke Lemma \ref{lemma_error_g} to express $\widehat{f}_{N,r}(\bm{k})$. 
\end{proof}

\begin{lemma}\label{lemma_sigma}
    For any $r\in 1:R$ and $\bm{k}\in \mathbb{N}_0^s$, define
    \begin{align*}
        t(\bm{k}):=\sum_{\el\in \bm{C}_r^\perp\setminus\{\bm{0}\} }|\widehat{f}(\bm{k}\oplus\el)|,\quad S(\bm{k}):=\sum_{\el\in \bm{C}_r^\perp\setminus\{\bm{0}\} }|\widehat{f}(\bm{k}\oplus\el)|^2.
    \end{align*}
    Then we have
    \begin{equation*}
        \sigma^2(\bm{k}):=\mathbb{E}_{\bm{D}_r}\left[|\widehat{f}_{N,r}(\bm{k})|^4\right]-\left(\mathbb{E}_{\bm{D}_r}\left[|\widehat{f}_{N,r}(\bm{k})|^2\right]\right)^2\le 8|\widehat{f}(\bm{k})|^2 S(\bm{k})+2t(\bm{k})^2 S(\bm{k}).
    \end{equation*}
\end{lemma}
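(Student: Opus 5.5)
Write $a_{\el}:=\widehat{f}(\bm{k}\oplus\el)$ for $\el\in\bm{C}_r^\perp$, so that $a_{\bm{0}}=\widehat{f}(\bm{k})$. Since $\bm{C}_r^\perp$ is a group under $\oplus$ (it is the kernel of a linear map mod 2), Lemma~\ref{2_4_matrix} gives
\[
\mathbb{E}_{\bm{D}_r}|\widehat{f}_{N,r}(\bm{k})|^2=\sum_{\el}a_{\el}^2=:Q,\qquad
\mathbb{E}_{\bm{D}_r}|\widehat{f}_{N,r}(\bm{k})|^4=\sum_{\el}b_{\el}^2,
\]
where $b_{\el}:=\sum_{\el'}a_{\el'}a_{\el\oplus\el'}$ and all sums run over $\bm{C}_r^\perp$. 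Note that $b_{\bm{0}}=Q$. Hence the variance is exactly
\[
\sigma^2(\bm{k})=\sum_{\el\ne\bm{0}}b_{\el}^2 .
\]
The plan is therefore to bound each off-diagonal autocorrelation $b_{\el}$, $\el\ne\bm{0}$, and then sum the squares.

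For fixed $\el\ne\bm{0}$, I split the sum defining $b_{\el}$ according to whether $\el'$ or $\el\oplus\el'$ equals $\bm{0}$. The two terms $\el'=\bm{0}$ and $\el'=\el$ each contribute $a_{\bm{0}}a_{\el}$, so
\[
b_{\el}=2a_{\bm{0}}a_{\el}+c_{\el},\qquad c_{\el}:=\sum_{\el'\notin\{\bm{0},\el\}}a_{\el'}a_{\el\oplus\el'} .
\]
In the remainder $c_{\el}$ both indices are nonzero. Using $(x+y)^2\le 2x^2+2y^2$ we get
\[
\sigma^2(\bm{k})\le 8a_{\bm{0}}^2\sum_{\el\ne\bm{0}}a_{\el}^2+2\sum_{\el\ne\bm{0}}c_{\el}^2=8|\widehat{f}(\bm{k})|^2S(\bm{k})+2\sum_{\el\ne\bm{0}}c_{\el}^2 ,
\]
which already produces the first term of the claimed bound.

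It remains to show $\sum_{\el\ne\bm{0}}c_{\el}^2\le t(\bm{k})^2S(\bm{k})$; this is the only step requiring care. Let $\tilde a_{\el}:=|a_{\el}|\bm{1}\{\el\ne\bm{0}\}$, so that $|c_{\el}|\le(\tilde a*\tilde a)(\el)$, where $*$ is convolution on the group $\bm{C}_r^\perp$. Young's inequality $\|\tilde a*\tilde a\|_{\ell^2}\le\|\tilde a\|_{\ell^1}\|\tilde a\|_{\ell^2}$ then gives
\[
\sum_{\el}c_{\el}^2\le t(\bm{k})^2S(\bm{k}).
\]
Alternatively, one can avoid Young's inequality: apply Cauchy--Schwarz as $c_{\el}^2\le\big(\sum_{\el'}\tilde a_{\el'}\big)\big(\sum_{\el'}\tilde a_{\el'}\tilde a_{\el\oplus\el'}^2\big)$, then sum over $\el$ and use the bijection $\el\mapsto\el\oplus\el'$ to get $t\cdot t\cdot S$.

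Absolute convergence of all these rearrangements follows from $\sum_{\el}|\widehat{f}(\el)|<\infty$, which is guaranteed by the remark after Lemma~\ref{lemma_psi}. Combining the last two displays gives $\sigma^2(\bm{k})\le 8|\widehat{f}(\bm{k})|^2S(\bm{k})+2t(\bm{k})^2S(\bm{k})$, as claimed.
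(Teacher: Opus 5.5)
Your proposal is correct and follows the paper's proof essentially step for step. You use the same identification $\sigma^2(\bm{k})=\sum_{\el\neq\bm{0}}b_{\el}^2$, the same splitting-off of the two terms $\el'\in\{\bm{0},\el\}$ followed by $(x+y)^2\le 2x^2+2y^2$, and the same final bound $\sum_{\el\neq\bm{0}}c_{\el}^2\le t(\bm{k})^2S(\bm{k})$. Your Young's-inequality formulation of that last step is a compact restatement of the paper's weighted Cauchy--Schwarz plus the reindexing $\bm{L}=\el\oplus\el'$, which is exactly the alternative you also sketched.
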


\begin{proof}
    For $\el = \bm{0}$, we have $\sum_{\el'\in \bm{C}_r^\perp}\widehat{f}(\bm{k}\oplus\el')\widehat{f}(\bm{k}\oplus\el\oplus\el') = \sum_{\el'\in \bm{C}_r^\perp}|\widehat{f}(\bm{k}\oplus\el')|^2$. Hence, by Lemma \ref{2_4_matrix}, the variance is given by
    \begin{equation*}
        \sigma^2(\bm{k})=\sum_{\el\in \bm{C}_r^\perp\setminus\{\bm{0}\} }\left|\sum_{\el'\in \bm{C}_r^\perp}\widehat{f}(\bm{k}\oplus\el')\widehat{f}(\bm{k}\oplus\el\oplus\el') \right|^2.
    \end{equation*}
    For any $\el\in \bm{C}_r^\perp\setminus\{\bm{0}\}$, by Cauchy–Schwarz inequality,
    \begin{align*}
        &\left|\sum_{\el'\in \bm{C}_r^\perp}\widehat{f}(\bm{k}\oplus\el')\widehat{f}(\bm{k}\oplus\el\oplus\el') \right|^2\\
        &=\left|2\widehat{f}(\bm{k})\widehat{f}(\bm{k}\oplus\el) + \sum_{\el'\in \bm{C}_r^\perp\setminus\{\bm{0},\el\} }\widehat{f}(\bm{k}\oplus\el')\widehat{f}(\bm{k}\oplus\el\oplus\el')\right|\\
        &\le 8|\widehat{f}(\bm{k})|^2|\widehat{f}(\bm{k}\oplus\el)|^2 + 2\left|\sum_{\el'\in \bm{C}_r^\perp\setminus\{\bm{0},\el\} }\widehat{f}(\bm{k}\oplus\el')\widehat{f}(\bm{k}\oplus\el\oplus\el') \right|^2.
    \end{align*}
    A further application of Cauchy–Schwarz inequality to the second term yields:
    \begin{align*}
        &\left|\sum_{\el'\in \bm{C}_r^\perp\setminus\{\bm{0},\el\} }\widehat{f}(\bm{k}\oplus\el')\widehat{f}(\bm{k}\oplus\el\oplus\el') \right|^2\\
        &\le \left(\sum_{\el'\in \bm{C}_r^\perp\setminus\{\bm{0},\el\} }|\widehat{f}(\bm{k}\oplus\el')| \right)\left(\sum_{\el'\in \bm{C}_r^\perp\setminus\{\bm{0},\el\} }|\widehat{f}(\bm{k}\oplus\el')||\widehat{f}(\bm{k}\oplus\el\oplus\el')|^2 \right)\\
        &\le t(\bm{k})\left(\sum_{\el'\in \bm{C}_r^\perp\setminus\{\bm{0},\el\} }|\widehat{f}(\bm{k}\oplus\el')||\widehat{f}(\bm{k}\oplus\el\oplus\el')|^2 \right).
    \end{align*}
    Thus,
    \begin{align*}
        \sigma^2(\bm{k})
        &\le2t(\bm{k})\left(\sum_{\el\in \bm{C}_r^\perp\setminus\{\bm{0}\} }\sum_{\el'\in \bm{C}_r^\perp\setminus\{\bm{0},\el\} }|\widehat{f}(\bm{k}\oplus\el')||\widehat{f}(\bm{k}\oplus\el\oplus\el')|^2 \right)\\
        &\quad +8|\widehat{f}(\bm{k})|^2\sum_{\el\in \bm{C}_r^\perp\setminus\{\bm{0}\} } |\widehat{f}(\bm{k}\oplus\el)|^2\\
        &= 8|\widehat{f}(\bm{k})|^2 S(\bm{k}) + 2t(\bm{k})\left(\sum_{\el'\in \bm{C}_r^\perp\setminus\{\bm{0}\}}\sum_{\bm{L}\in \bm{C}_r^\perp\setminus\{\bm{0},\el'\}}|\widehat{f}(\bm{k}\oplus\el')||\widehat{f}(\bm{k}\oplus\bm{L})|^2\right)\\
        &\le 8|\widehat{f}(\bm{k})|^2 S(\bm{k}) + 2t(\bm{k})\left(\sum_{\el'\in \bm{C}_r^\perp\setminus\{\bm{0}\}}\sum_{\bm{L}\in \bm{C}_r^\perp\setminus\{\bm{0}\}}|\widehat{f}(\bm{k}\oplus\el')||\widehat{f}(\bm{k}\oplus\bm{L})|^2\right)\\
        &=8|\widehat{f}(\bm{k})|^2 S(\bm{k})+2t(\bm{k})^2 S(\bm{k}),
    \end{align*}
    where in the second equality we interchange the order of summation and write $\bm{L}=\el\oplus\el'$.
\end{proof}

\begin{lemma}\label{lemma_s_r}
    For $\bm{k}\in K$, we have
    \begin{align*}
        \operatorname{Pr}\left(S(\bm{k})>\frac{\Lambda_2}{\delta N} \right)\le 2\delta\quad\text{and}\quad\operatorname{Pr}\left(t(\bm{k})>\frac{\Lambda_1}{\delta N} \right)\le 2\delta,
    \end{align*}
    where $\Lambda_2$ and $\Lambda_1$ are defined in (\ref{Lambda_m}) and (\ref{Psi_m}), respectively.
\end{lemma}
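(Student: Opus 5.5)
The plan is to reuse the argument behind Lemma~\ref{median_first} almost verbatim, since both $S(\bm{k})$ and $t(\bm{k})$ depend only on the random matrices $\bm{C}_r$ through the dual net $\bm{C}_r^\perp$. Fix $\bm{k}\in K$ and let $G$ denote the good event $(\bm{k}\oplus\bm{C}_r^\perp)\cap K_{m,\bm{\gamma}}\subseteq\{\bm{k}\}$. By \eqref{Pr_set}, which uses Lemma~\ref{card} and Lemma~\ref{lemma_set_c_r}, we have $\operatorname{Pr}(G^c)\le|K_{m,\bm{\gamma}}|/N\le\delta$. Hence for either quantity $X\in\{S(\bm{k}),t(\bm{k})\}$ and threshold $c$ we can split
\[
\operatorname{Pr}(X>c)\le\operatorname{Pr}(G^c)+\operatorname{Pr}(X\bm{1}\{G\}>c)\le\delta+\frac{1}{c}\,\mathbb{E}_{\bm{C}_r}\left[X\bm{1}\{G\}\right],
\]
where the last step is Markov's inequality.

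For $S(\bm{k})$, the expectation $\mathbb{E}[S(\bm{k})\bm{1}\{G\}]\le\Lambda_2/N$ is exactly inequality \eqref{E_C_r_L2}. Taking $c=\Lambda_2/(\delta N)$ gives the bound $\delta+\delta=2\delta$. For $t(\bm{k})$, I will repeat the computation \eqref{E_C_r_L2} with $|\widehat f|$ in place of $|\widehat f|^2$. First reindex with $\bm{k}'=\bm{k}\oplus\el$ to write $t(\bm{k})=\sum_{\bm{k}'\neq\bm{k}}|\widehat f(\bm{k}')|\bm{1}\{\bm{k}'\oplus\bm{k}\in\bm{C}_r^\perp\}$. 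On $G$, no $\bm{k}'\in K_{m,\bm{\gamma}}\setminus\{\bm{k}\}$ can contribute. Bounding the indicator of $G$ by $1$ for the remaining terms and applying Lemma~\ref{lemma_set_c_r} termwise gives
\[
\mathbb{E}[t(\bm{k})\bm{1}\{G\}]\le\sum_{\bm{k}'\notin K_{m,\bm{\gamma}}\cup\{\bm{k}\}}\frac{|\widehat f(\bm{k}')|}{N}\le\frac{\Lambda_1}{N}.
\]
Choosing $c=\Lambda_1/(\delta N)$ then yields $2\delta$.

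The only subtle points are the following.
\begin{itemize}
\item Interchanging expectation and the infinite sum requires Tonelli, which applies since all terms are nonnegative; finiteness of $\Lambda_1$ follows from Lemma~\ref{lemma_psi}, given $\alpha+\lambda>1/2$.
\item If $\Lambda_2=0$ or $\Lambda_1=0$, the corresponding quantity vanishes almost surely on $G$, so the bound holds trivially.
\end{itemize}
I do not expect a real obstacle; the step needing the most care is checking that the good event $G$ really excludes every index in $K_{m,\bm{\gamma}}$ other than $\bm{k}$ itself, which is the same observation already made after \eqref{E_C_r_L2}.
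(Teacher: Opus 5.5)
Your proposal is correct and follows the paper's proof essentially verbatim. Like the paper, you split on the good event $(\bm{k}\oplus\bm{C}_r^\perp)\cap K_{m,\bm{\gamma}}\subseteq\{\bm{k}\}$, bound its complement by $\delta$ via \eqref{Pr_set}, and apply Markov's inequality using \eqref{E_C_r_L2} for $S(\bm{k})$ and its $\ell^1$ analogue for $t(\bm{k})$. Your remarks on Tonelli and on the degenerate case $\Lambda_2=0$ or $\Lambda_1=0$ are harmless refinements that the paper leaves implicit.
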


\begin{proof}
    Similar to (\ref{Pr_median}) in the proof of Lemma \ref{median_first}, by applying Markov’s inequality we have
    \begin{align*}
        &\operatorname{Pr}\left(S(\bm{k})>\frac{\Lambda_2}{\delta N} \right)\\
        &\le \operatorname{Pr}\left((\bm{k}\oplus \bm{C}_r^\perp)\cap K_{m,\bm{\gamma}}\not\subseteq\{\bm{k}\}\right)+ \operatorname{Pr}\left( \frac{\delta N}{\Lambda_2}S(\bm{k}) \bm{1}\left\{(\bm{k}\oplus \bm{C}_r^\perp)\cap K_{m,\bm{\gamma}}\subseteq\{\bm{k}\} \right\}> 1 \right)\\
        &\le \delta + \frac{\delta N}{\Lambda_2}\mathbb{E}_{\bm{C}_r}\left[\bm{1}\left\{(\bm{k}\oplus \bm{C}_r^\perp)\cap K_{m,\bm{\gamma}}\subseteq\{\bm{k}\} \right\}\sum_{\el\in \bm{C}_r^\perp\setminus\{\bm{0}\}}|\widehat{f}(\bm{k}\oplus\el)|^2 \right]\\
        &\le 2\delta.
    \end{align*}
    The bound for $t(\bm{k})$ follows in a similar way by replacing $\sum_{\el\in \bm{C}_r^\perp\setminus\{\bm{0}\}}|\widehat{f}(\bm{k}\oplus\el)|^2$ with $\sum_{\el\in \bm{C}_r^\perp\setminus\{\bm{0}\}}|\widehat{f}(\bm{k}\oplus\el)|$.
\end{proof}

\begin{lemma}
    For $f$ with $\Vert f\Vert_{s,\alpha,\lambda,\bm{\gamma}}<\infty$, we have
    \begin{equation*}
        |\widehat{f}(\bm{k}_N')|^2\le \frac{\Lambda_2}{(1-\delta)N},
    \end{equation*}
    where $\Lambda_2$ is defined in (\ref{Lambda_m}).
\end{lemma}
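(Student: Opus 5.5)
We prove the bound by pigeonhole on the ordering of $K$. Recall that we are in the case $|K|\ge N$, the elements of $K$ are ordered as $\bm{k}_1',\bm{k}_2',\ldots$ with $|\widehat{f}(\bm{k}_1')|\ge|\widehat{f}(\bm{k}_2')|\ge\cdots$, and $\bm{k}_N'$ is the $N$-th largest. The set $\{\bm{k}_1',\ldots,\bm{k}_N'\}$ has $N$ elements, while $|K_{m,\bm{\gamma}}|\le \delta N$ by Lemma~\ref{card}. Hence at least $N-|K_{m,\bm{\gamma}}|\ge (1-\delta)N$ of the indices $\bm{k}_1',\ldots,\bm{k}_N'$ lie in $\mathbb{N}_0^s\setminus K_{m,\bm{\gamma}}$. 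When $|K_{m,\bm{\gamma}}|$ is not an integer multiple issue, we simply use the integer count $N-|K_{m,\bm{\gamma}}|$, which is at least $(1-\delta)N$.

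Each such index $\bm{k}_j'$ with $j\le N$ satisfies $|\widehat{f}(\bm{k}_j')|^2\ge|\widehat{f}(\bm{k}_N')|^2$ by the ordering. Summing over these indices only, and using that they are distinct elements outside $K_{m,\bm{\gamma}}$, gives
\begin{equation*}
(1-\delta)N\,|\widehat{f}(\bm{k}_N')|^2\le \sum_{\substack{j\le N\\ \bm{k}_j'\notin K_{m,\bm{\gamma}}}}|\widehat{f}(\bm{k}_j')|^2\le \sum_{\bm{k}\in\mathbb{N}_0^s\setminus K_{m,\bm{\gamma}}}|\widehat{f}(\bm{k})|^2=\Lambda_2 .
\end{equation*}
Dividing by $(1-\delta)N$ yields the claim. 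The quantity $\Lambda_2$ is finite by Lemma~\ref{lemma_lambda}, though finiteness is not even needed for the inequality itself.

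The argument has no real obstacle. The only point requiring care is to note that the statement is only meaningful when $|K|\ge N$, so that $\bm{k}_N'$ exists; this is the case considered in this subsection. No use of the containment $K\supseteq K_{m,\bm{\gamma}}$ is required beyond that.
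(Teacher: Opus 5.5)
Your proof is correct and is essentially the paper's argument. Both use Lemma~\ref{card} to show that at least $(1-\delta)N$ elements of $H_N'$ lie outside $K_{m,\bm{\gamma}}$, bound each of their squared coefficients below by $|\widehat{f}(\bm{k}_N')|^2$, and compare the resulting sum with $\Lambda_2$.
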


\begin{proof}
    Since $|K_{m,\bm{\gamma}}|\le \delta N$, we have $|H_N'\setminus K_{m,\bm{\gamma}}|\ge (1-\delta)N$. Thus,
    \begin{equation*}
        \Lambda_2:=\sum_{\bm{k}\in\mathbb{N}_0^s\setminus K_{m,\bm{\gamma}}}|\widehat{f}(\bm{k})|^2\ge \sum_{\bm{k}\in H_N'\setminus K_{m,\bm{\gamma}}}|\widehat{f}(\bm{k})|^2\ge (1-\delta)N |\widehat{f}(\bm{k}_N')|^2,
    \end{equation*}
    and the conclusion follows.
\end{proof}

\begin{corollary}\label{cor_K_minus_H}
    For any $\delta\in(0,\frac{1}{4})$ and  $\bm{k}\in K\setminus H_N'$, we have
    \begin{equation*}
        \operatorname{Pr}\left( \underset{r\in 1:R}{\operatorname{med}}|\widehat{f}_{N,r}(\bm{k})|^2>\frac{(2-\delta)\Lambda_2}{\delta(1-\delta)N}\right)\le \frac{1}{2}(8\delta(1-2\delta))^{R/2},
    \end{equation*}
    where $\Lambda_2$ is defined in (\ref{Lambda_m}).
\end{corollary}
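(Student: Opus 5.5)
The plan is to reduce the median statement to a single-replicate probability bound and then apply Lemma~\ref{lemma_median_trick}. Fix $\bm{k}\in K\setminus H_N'$ and $r\in 1{:}R$. It suffices to show
\[
\operatorname{Pr}\left(|\widehat{f}_{N,r}(\bm{k})|^2>\frac{(2-\delta)\Lambda_2}{\delta(1-\delta)N}\right)\le 2\delta .
\]
Since the replicates $|\widehat{f}_{N,r}(\bm{k})|^2$, $r\in 1{:}R$, are i.i.d.\ and $2\delta<1/2$, Lemma~\ref{lemma_median_trick} with $p=2\delta$ and $I=[0,(2-\delta)\Lambda_2/(\delta(1-\delta)N)]$ gives the bound $\frac12(8\delta(1-2\delta))^{R/2}$. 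Taking the median of the squares is the same as squaring the median of the absolute values, so no extra work is needed there.

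For the single-replicate bound, I split the estimator as $\widehat{f}_{N,r}(\bm{k})=\widehat{f}(\bm{k})+(\widehat{f}_{N,r}(\bm{k})-\widehat{f}(\bm{k}))$.

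First, since $\bm{k}\in K\setminus H_N'$ and $H_N'$ collects the $N$ largest coefficients in $K$, we have $|\widehat{f}(\bm{k})|\le|\widehat{f}(\bm{k}_N')|$. The preceding lemma then gives $|\widehat{f}(\bm{k})|^2\le \Lambda_2/((1-\delta)N)$.

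Second, the proof of Lemma~\ref{median_first}, specifically \eqref{Pr_median}, shows that
\[
\operatorname{Pr}\left(|\widehat{f}_{N,r}(\bm{k})-\widehat{f}(\bm{k})|^2>\Lambda_2/(\delta N)\right)\le 2\delta .
\]
That argument uses only $K_{m,\bm{\gamma}}$, Lemma~\ref{card} and Lemma~\ref{lemma_set_c_r}, and does not need $\bm{k}\in K_{m,\bm\gamma}$.

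On the complementary event, I combine the two bounds. If I use $(a+b)^2\le 2a^2+2b^2$, I only get $2\Lambda_2/((1-\delta)N)+2\Lambda_2/(\delta N)$. This is off by a factor of $2$ from the target constant $(2-\delta)/(\delta(1-\delta))=1/\delta+1/(1-\delta)$, because
\[
\frac{1}{\delta}+\frac{1}{1-\delta}=\frac{1-\delta+\delta}{\delta(1-\delta)}=\frac{1}{\delta(1-\delta)},
\]
which is not the same as $(2-\delta)/(\delta(1-\delta))$. Rechecking the target, $\frac{2-\delta}{\delta(1-\delta)}=\frac{1}{\delta}+\frac{1}{\delta(1-\delta)}\cdot(1-\delta)\cdot\frac{1}{1}$, so it looks like the constant comes from an expectation identity rather than from the triangle inequality. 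I would therefore work with second moments instead of pointwise bounds.

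By Lemma~\ref{2_4_matrix},
\[
\mathbb{E}_{\bm{D}_r}|\widehat{f}_{N,r}(\bm{k})|^2=|\widehat{f}(\bm{k})|^2+S(\bm{k}).
\]
On the event $(\bm{k}\oplus\bm{C}_r^\perp)\cap K_{m,\bm\gamma}\subseteq\{\bm{k}\}$, estimate \eqref{E_C_r_L2} gives $\mathbb{E}[S\,\bm{1}]\le\Lambda_2/N$. Hence
\[
\mathbb{E}\bigl[|\widehat{f}_{N,r}(\bm{k})|^2\bm{1}\bigr]\le \frac{\Lambda_2}{(1-\delta)N}+\frac{\Lambda_2}{N}=\frac{(2-\delta)\Lambda_2}{(1-\delta)N}.
\]
Markov's inequality at level $(2-\delta)\Lambda_2/(\delta(1-\delta)N)$ then contributes probability at most $\delta$. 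The bad event has probability at most $\delta$ by \eqref{Pr_set}, so the total is at most $2\delta$. This reproduces exactly the constant in the statement.

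The only delicate step is conditioning. The $\bm{D}_r$-expectation must be taken inside the indicator, which depends only on $\bm{C}_r$, using the independence of $\bm{C}_r$ and $\bm{D}_r$, exactly as in \eqref{Pr_median}.
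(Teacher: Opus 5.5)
Your final argument is correct and is essentially the paper's own proof. It combines the identity $\mathbb{E}_{\bm{D}_r}|\widehat{f}_{N,r}(\bm{k})|^2=|\widehat{f}(\bm{k})|^2+S(\bm{k})$ from Lemma~\ref{2_4_matrix}, the bound \eqref{E_C_r_L2} on the good event, Markov's inequality together with \eqref{Pr_set} to reach $2\delta$, and Lemma~\ref{lemma_median_trick} with $p=2\delta$. You should delete the abandoned triangle-inequality detour, including the garbled rewriting of $(2-\delta)/(\delta(1-\delta))$; that quantity actually equals $1/\delta+1/(\delta(1-\delta))$.
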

\begin{proof}
    For any $\bm{k}\in K\setminus H_N'$, we have
    \begin{equation*}
        |\widehat{f}(\bm{k})|^2\le |\widehat{f}(\bm{k}_N')|^2\le \frac{\Lambda_2}{(1-\delta)N}.
    \end{equation*}
    Similar to (\ref{E_C_r_L2}) in the proof of Lemma \ref{median_first}, for any $r\in 1:R$,
    \begin{align*}
        &\mathbb{E}\left[|\widehat{f}_{N,r}(\bm{k})|^2\bm{1}\left\{(\bm{k}\oplus \bm{C}_r^\perp)\cap K_{m,\bm{\gamma}}\subseteq\{\bm{k}\} \right\}\right]\\
        &=\mathbb{E}_{\bm{C}_r}\left[\bm{1}\left\{(\bm{k}\oplus \bm{C}_r^\perp)\cap K_{m,\bm{\gamma}}\subseteq\{\bm{k}\} \right\}\sum_{\el\in \bm{C}_r^\perp}|\widehat{f}(\bm{k}\oplus\el)|^2 \right]\\
        &\le \frac{\Lambda_2}{N} + |\widehat{f}(\bm{k})|^2\le \frac{(2-\delta)\Lambda_2}{(1-\delta)N}.
    \end{align*}
    Hence, by (\ref{Pr_set}) and Markov’s inequality
    \begin{align*}
        &\operatorname{Pr}\left(|\widehat{f}_{N,r}(\bm{k})|^2>\frac{(2-\delta)\Lambda_2}{\delta(1-\delta)N} \right)\\
        &\le \operatorname{Pr}\left((\bm{k}\oplus \bm{C}_r^\perp)\cap K_{m,\bm{\gamma}}\not\subseteq\{\bm{k}\}\right)\\
        &\quad+\frac{\delta(1-\delta)N}{(2-\delta)\Lambda_2}\mathbb{E}\left[|\widehat{f}_{N,r}(\bm{k})|^2\bm{1}\left\{(\bm{k}\oplus \bm{C}_r^\perp)\cap K_{m,\bm{\gamma}}\subseteq\{\bm{k}\} \right\}\right]\\
        &\le 2\delta.
    \end{align*}
    The conclusion follows from Lemma \ref{lemma_median_trick}.
\end{proof}

\begin{lemma}\label{lemma_with_condition}
    For $\bm{k}\in K, \delta\in (0,\frac{1}{10}), \theta\in(0,1)$, and $f$ with $\Vert f\Vert_{s,\alpha,\lambda}< \infty$, suppose that 
    \begin{equation}\label{condition_hat_f}
        |\widehat{f}(\bm{k})|\ge G_{\alpha,\lambda,\beta,\bm{\gamma},\delta,\theta} \frac{\Gamma_{\alpha,\lambda,\beta,\bm{\gamma}}(m)^{\theta(\alpha+\lambda)}}{N^{\theta(\alpha+\lambda+1/2)}}\Vert f\Vert_{s,\alpha,\lambda},
    \end{equation}
    where
        \begin{equation}\label{constant_G}
        G_{\alpha,\lambda,\beta,\bm{\gamma},\delta,\theta}=\delta^{-1}(1-\delta)^{\frac{1}{4}}\max\left\{8\Upsilon_{\alpha,\lambda,\beta, \bm{\gamma},\delta,\theta}^{\frac{1}{2}},\frac{1}{2}\Theta_{\alpha,\lambda,\beta,\bm{\gamma},\delta,\eta}\right\}
    \end{equation}
    with $\eta=(1-1/(2\alpha+2\lambda))\theta$, and $\Upsilon_{\alpha,\lambda,\beta, \bm{\gamma},\delta,\theta}$ and $\Theta_{\alpha,\lambda,\beta,\bm{\gamma},\delta,\eta}$ are defined in (\ref{constant_Gamma}) and (\ref{constant_Theta}), respectively.
    Then 
    \begin{align*}
        \operatorname{Pr}\left(\underset{r\in 1:R}{\operatorname{med}}|\widehat{f}_{N,r}(\bm{k})|^2\le \frac{|\widehat{f}(\bm{k})|^2}{2} \right)\le \frac{1}{2}(20\delta(1-5\delta))^{R/2}.
    \end{align*}
\end{lemma}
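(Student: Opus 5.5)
The plan is to show that a single repetition $r$ fails with probability at most $5\delta$, and then boost with Lemma~\ref{lemma_median_trick}. Taking $p=5\delta<1/2$ (valid since $\delta<1/10$) and $I=(|\widehat f(\bm k)|^2/2,\infty)$ gives the bound $\tfrac12(20\delta(1-5\delta))^{R/2}$. So it suffices to prove
\[
\operatorname{Pr}\bigl(|\widehat f_{N,r}(\bm k)|^2\le |\widehat f(\bm k)|^2/2\bigr)\le 5\delta .
\]

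\textbf{Step 1: good event.} I will introduce the good event $\mathcal G=\{S(\bm k)\le \Lambda_2/(\delta N)\}\cap\{t(\bm k)\le \Lambda_1/(\delta N)\}$, which depends only on $\bm C_r$. By Lemma~\ref{lemma_s_r}, $\operatorname{Pr}(\mathcal G^c)\le 4\delta$. It then remains to show that, conditionally on $\bm C_r\in\mathcal G$, the $\bm D_r$-probability of the bad event is at most $\delta$.

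\textbf{Step 2: Chebyshev in $\bm D_r$.} Set $X=|\widehat f_{N,r}(\bm k)|^2$. By Lemma~\ref{2_4_matrix}, $\mathbb E_{\bm D_r}X=|\widehat f(\bm k)|^2+S(\bm k)\ge|\widehat f(\bm k)|^2$. Hence the event $X\le|\widehat f(\bm k)|^2/2$ forces $|X-\mathbb E_{\bm D_r}X|\ge |\widehat f(\bm k)|^2/2$. Chebyshev's inequality together with Lemma~\ref{lemma_sigma} gives
\[
\operatorname{Pr}_{\bm D_r}\le \frac{4\sigma^2(\bm k)}{|\widehat f(\bm k)|^4}\le \frac{32 S(\bm k)}{|\widehat f(\bm k)|^2}+\frac{8t(\bm k)^2S(\bm k)}{|\widehat f(\bm k)|^4}.
\]
On $\mathcal G$ this is at most
\[
\frac{32\Lambda_2}{\delta N|\widehat f(\bm k)|^2}+\frac{8\Lambda_1^2\Lambda_2}{\delta^3N^3|\widehat f(\bm k)|^4}.
\]

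\textbf{Step 3: plug in the tail bounds.} Write $Q=\Gamma_{\alpha,\lambda,\beta,\bm\gamma}(m)^{\theta(\alpha+\lambda)}\Vert f\Vert_{s,\alpha,\lambda}$. Lemma~\ref{lemma_lambda} gives $\Lambda_2\le \Upsilon Q^2N^{-2\theta(\alpha+\lambda)}$. Lemma~\ref{lemma_psi}, applied with $\eta=(1-1/(2\alpha+2\lambda))\theta$ (admissible since $\theta<1$), gives $\Lambda_1\le\Theta\,\Gamma^{\eta(\alpha+\lambda)}\Vert f\Vert N^{-\eta(\alpha+\lambda)}$. Since $\eta\le\theta$ and $\Gamma\ge1$, the $\Gamma$ power in $\Lambda_1$ can be raised to $\theta(\alpha+\lambda)$. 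The key exponent identity is $\eta(\alpha+\lambda)=\theta(\alpha+\lambda-1/2)$. Using the hypothesis $|\widehat f(\bm k)|\ge G\,Q\,N^{-\theta(\alpha+\lambda+1/2)}$:
\begin{itemize}
\item the first term is $\le 32\Upsilon N^{\theta-1}/(\delta G^2)$;
\item the second term is $\le 8\Theta^2\Upsilon N^{\theta(2\alpha+2\lambda-1)-2\theta(\alpha+\lambda)-2\theta(\alpha+\lambda)+4\theta(\alpha+\lambda+1/2)-3}/(\delta^3G^4)$, whose exponent simplifies to $\theta-3+2\theta\le0$.
\end{itemize}
Thus both powers of $N$ are $\le1$. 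Substituting $G\ge\delta^{-1}\cdot 8\Upsilon^{1/2}$ into the first term and $G^4\ge \delta^{-4}\cdot 64\Upsilon\cdot\Theta^2/4$ (from the product of the two maxima) into the second, the total is at most a small multiple of $\delta$. The factor $(1-\delta)^{1/4}$ and the exact constants in $G$ are presumably tuned to make this total $\le\delta$.

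The main obstacle is this constant bookkeeping. The crude estimate above gives roughly $\delta/2+\delta^{\,}\cdot O(1)$, and I expect to need care there. Possibly the intended route is to bound $S(\bm k)$ more sharply or to use $|X-\mathbb E X|\ge \mathbb E X/2$, so that the variance is divided by $(\mathbb E X)^2\ge|\widehat f|^4$. If the constants do not close at exactly $\delta$, I would reallocate the probabilities: Lemma~\ref{lemma_s_r} gives $4\delta$, leaving a slack of $\delta$ for the Chebyshev part, so the Chebyshev bound must be forced below $\delta$ by the choice of $G$.
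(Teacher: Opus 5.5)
Your architecture matches the paper's: a good event $\mathcal G$ costing $4\delta$ via Lemma~\ref{lemma_s_r}, a second-moment bound in $\bm D_r$ from Lemma~\ref{lemma_sigma}, the tail bounds of Lemmas~\ref{lemma_lambda} and~\ref{lemma_psi}, and Lemma~\ref{lemma_median_trick} with $p=5\delta$. However, the step you called ``constant bookkeeping'' is a genuine gap, and it comes from using two-sided Chebyshev.

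First, $G$ carries the factor $(1-\delta)^{1/4}<1$, so your substitution $G\ge\delta^{-1}\cdot 8\Upsilon^{1/2}$ is false. The correct bounds are $G^2\ge 64\,\delta^{-2}(1-\delta)^{1/2}\Upsilon$ and $G^4\ge 16\,\delta^{-4}(1-\delta)\Upsilon\Theta^2$. With these, your two terms give
\[
\frac{4\sigma^2(\bm k)}{|\widehat f(\bm k)|^4}\le\frac{\delta}{2\sqrt{1-\delta}}+\frac{\delta}{2(1-\delta)}\le\frac{\delta}{1-\delta}.
\]
No uniform slack remains, because $N^{\theta-1}$ and $N^{3\theta-3}$ tend to $1$ as $\theta\to1$. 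Each summand exceeds $\delta/2$, so you cannot conclude $\le\delta$. Your per-repetition failure bound is then $4\delta$ plus a quantity larger than $\delta$, hence exceeds $5\delta$, and the stated $\tfrac12(20\delta(1-5\delta))^{R/2}$ does not follow. For $\delta$ close to $1/10$ the total even exceeds $1/2$, so Lemma~\ref{lemma_median_trick} cannot be applied at all.

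The repairs you suggest do not help. $S(\bm k)$ is already at the level permitted on $\mathcal G$. Dividing by $(\mathbb E_{\bm D_r}X)^2$ instead of $|\widehat f(\bm k)|^4$ gains essentially nothing, since $S(\bm k)\ll|\widehat f(\bm k)|^2$ in this regime. Separately, your displayed exponent for the second term has a sign slip: it should be $-\theta(2\alpha+2\lambda-1)-2\theta(\alpha+\lambda)+4\theta(\alpha+\lambda+1/2)-3=3\theta-3$. Your conclusion that this is $\le 0$ is correct.

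The missing idea is that the bad event is one-sided, $X-\mathbb E_{\bm D_r}X\le-|\widehat f(\bm k)|^2/2$, so you should use Cantelli's inequality rather than Chebyshev:
\[
\operatorname{Pr}_{\bm D_r}\Bigl(X\le\tfrac12|\widehat f(\bm k)|^2\Bigr)\le\frac{\sigma^2(\bm k)}{\sigma^2(\bm k)+|\widehat f(\bm k)|^4/4}=\frac{x}{1+x},\qquad x:=\frac{4\sigma^2(\bm k)}{|\widehat f(\bm k)|^4}.
\]
Since $x\le\delta/(1-\delta)$ by the computation above, $x/(1+x)\le\delta$ exactly. This is what the factor $(1-\delta)^{1/4}$ in $G$ is calibrated for, and it is how the paper closes the argument to get $4\delta+\delta=5\delta$.

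Alternatively, you could keep Chebyshev and sharpen the good-event estimate instead. The two Markov arguments behind Lemma~\ref{lemma_s_r} share the same exceptional event $\{(\bm k\oplus\bm C_r^\perp)\cap K_{m,\bm\gamma}\not\subseteq\{\bm k\}\}$, of probability at most $\delta$. Hence $\operatorname{Pr}(\mathcal G^c)\le 3\delta$, and $3\delta+\delta/(1-\delta)\le 5\delta$ whenever $\delta\le 1/2$. Your plan, however, treats the $4\delta$ as fixed, so it does not close as written.
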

\begin{proof}
    Without loss of generality, we assume $\Vert f\Vert_{s,\alpha,\lambda}\le 1$. Note that for any $r\in 1:R$, we have
    \begin{equation*}
        \mathbb{E}_{\bm{D}_r}\left[|\widehat{f}_{N,r}(\bm{k})|^2\right]=\sum_{\el\in \bm{C}_r^\perp}|\widehat{f}(\bm{k}\oplus\el)|^2\ge |\widehat{f}(\bm{k})|^2.
    \end{equation*}
    Hence, by Cantelli’s inequality,
    \begin{align}
        &\mathbb{E}_{\bm{D}_r}\left[\bm{1}\left\{|\widehat{f}_{N,r}(\bm{k})|^2\le\frac{|\widehat{f}(\bm{k})|^2}{2}   \right\} \right]\nonumber\\
        &\le \mathbb{E}_{\bm{D}_r}\left[\bm{1}\left\{|\widehat{f}_{N,r}(\bm{k})|^2\le \mathbb{E}_{\bm{D}_r}\left[|\widehat{f}_{N,r}(\bm{k})|^2\right]-\frac{|\widehat{f}(\bm{k})|^2}{2}   \right\} \right]\nonumber\\
        &\le \frac{\sigma^2(\bm{k})}{\sigma^2(\bm{k})+|\widehat{f}_{N,r}(\bm{k})|^4/4}=\frac{1}{1+|\widehat{f}_{N,r}(\bm{k})|^4/(4\sigma^2(\bm{k}))}.\label{prob_m_hat}
    \end{align}
    When $S(\bm{k})\le \frac{\Lambda_2}{\delta N}$ and $t(\bm{k})\le\frac{\Lambda_1}{\delta N} $, Lemma \ref{lemma_sigma} gives 
    \begin{equation*}
        \frac{\sigma^2(\bm{k})}{|\widehat{f}(\bm{k})|^4}\le \frac{8\Lambda_2}{\delta N|\widehat{f}(\bm{k})|^2}+\frac{2\Lambda_1^2\Lambda_2}{(\delta N)^3|\widehat{f}(\bm{k})|^4}.
    \end{equation*}
    Applying Lemmas \ref{lemma_lambda} and \ref{lemma_psi} with $\theta \in (0,1)$ and $\eta  = (1-1/(2\alpha+2\lambda))\theta$, combined with the definition of $G_{\alpha,\lambda,\beta,\bm{\gamma},\delta,\theta}$, we obtain
    \begin{align*}
        &\frac{8\Lambda_2}{\delta N|\widehat{f}(\bm{k})|^2}+\frac{2\Lambda_1^2\Lambda_2}{(\delta N)^3|\widehat{f}(\bm{k})|^4}\\
        &\le \frac{8\Upsilon_{\alpha,\lambda,\beta, \bm{\gamma},\delta,\theta} \Gamma_{\alpha,\lambda,\beta,\bm{\gamma}}(m)^{2\theta(\alpha+\lambda)}}{\delta N^{2\theta(\alpha+\lambda)+1}|\widehat{f}(\bm{k})|^2} + \frac{2\Upsilon_{\alpha,\lambda,\beta, \bm{\gamma},\delta,\theta}\Theta_{\alpha,\lambda,\beta,\bm{\gamma},\delta,\eta}^2 \Gamma_{\alpha,\lambda,\beta,\bm{\gamma}}(m)^{\theta(4\alpha+4\lambda-1)}}{\delta^3 N^{\theta(4\alpha+4\lambda-1)+3} |\widehat{f}(\bm{k})|^4}\\
        &\le \frac{8\Upsilon_{\alpha,\lambda,\beta, \bm{\gamma},\delta,\theta} }{\delta N^{1-\theta} G_{\alpha,\lambda,\beta,\bm{\gamma},\delta,\theta}^2}+\frac{2\Upsilon_{\alpha,\lambda,\beta, \bm{\gamma},\delta,\theta}\Theta_{\alpha,\lambda,\beta,\bm{\gamma},\delta,\eta}^2}{\delta^3 N^{3-3\theta}G_{\alpha,\lambda,\beta,\bm{\gamma},\delta,\theta}^4}\\
        &\le \frac{\delta}{8(1-\delta)^{\frac{1}{2}}}+\frac{\Theta_{\alpha,\lambda,\beta,\bm{\gamma},\delta,\eta}^2}{32\delta(1-\delta)^{\frac{1}{2}}G_{\alpha,\lambda,\beta,\bm{\gamma},\delta,\theta}^2}\\
        &\le \frac{\delta}{8(1-\delta)} + \frac{\delta}{8(1-\delta)}=\frac{\delta}{4(1-\delta)}.
    \end{align*}
    where the second inequality used $ \Gamma_{\alpha,\lambda,\beta,\bm{\gamma}}(m)\ge 1$ and the third inequality used $N\ge 1$. So the right-most side of (\ref{prob_m_hat}) is bounded by $\delta$. Hence, by Lemma \ref{lemma_s_r}, 
    \begin{align*}
        &\operatorname{Pr}\left(|\widehat{f}_{N,r}(\bm{k})|^2\le\frac{|\widehat{f}(\bm{k})|^2}{2} \right)\\
        &\le \operatorname{Pr}\left(\left\{S(\bm{k})>\frac{\Lambda_2}{\delta N} \right\}\cup\left\{t(\bm{k})>\frac{\Lambda_1}{\delta N} \right\} \right)\\
        &\quad + \operatorname{Pr}\left(\left\{|\widehat{f}_{N,r}(\bm{k})|^2\le\frac{|\widehat{f}(\bm{k})|^2}{2} \right\}\cap\left\{S(\bm{k})\le\frac{\Lambda_2}{\delta N} \right\}\cap\left\{t(\bm{k})\le\frac{\Lambda_1}{\delta N} \right\}  \right)\\
        &\le 5\delta.
    \end{align*}
    The conclusion then follows from Lemma \ref{lemma_median_trick}.
\end{proof}

\begin{theorem}\label{thm_term3}
    Let $\delta\in(0,\frac{1}{10}),N=2^m, \theta\in (0,1)$, and let $R$ be an odd natural number satisfying
    \begin{equation*}
        \delta_2:= \frac{|K|}{2}(8\delta(1-2\delta))^{R/2} + \frac{N}{2}(20\delta(1-5\delta))^{R/2}<1.
    \end{equation*}
    Assume $K\supseteq K_{m,\bm{\gamma}}$ and the condition (\ref{condition_E}) holds.
    Then, for any $f$ with $\Vert f\Vert_{s,\alpha,\lambda}<\infty$, with probability at least $1-\delta_2$ we have
    \begin{align*}
        \sum_{\bm{k}'\in H_N'\setminus H^{E,\epsilon}_N}|\widehat{f}(\bm{k}')|^2&\le G_{\alpha,\lambda,\beta,\bm{\gamma},\delta,\theta}^2\frac{ \Gamma_{\alpha,\lambda,\beta,\bm{\gamma}}(m)^{2\theta(\alpha+\lambda)}  }{N^{2\theta(\alpha+\lambda)+\theta-1} }\Vert f\Vert_{s,\alpha,\lambda}^2+4N\epsilon_1\left(2\Vert f\Vert_{\infty}+\epsilon_1\right) \\
        &\quad+\frac{2(2-\delta)}{\delta(1-\delta)}\Upsilon_{\alpha,\lambda,\beta, \bm{\gamma},\delta,\theta}\frac{\Gamma_{\alpha,\lambda,\beta,\bm{\gamma}}(m)^{2\theta(\alpha+\lambda)}}{N^{2\theta(\alpha+\lambda)}}\Vert f\Vert_{s,\alpha,\lambda}^2,
    \end{align*}
    where $\epsilon_1$, $\Gamma_{\alpha,\lambda,\beta,\bm{\gamma}}(m)$, $\Upsilon_{\alpha,\lambda,\beta, \bm{\gamma},\delta,\theta}$, and $G_{\alpha,\lambda,\beta,\bm{\gamma},\delta,\theta}$ are defined in (\ref{constant_ep1}),  (\ref{Gamma(m)}), (\ref{constant_Gamma}), and (\ref{constant_G}), respectively.
\end{theorem}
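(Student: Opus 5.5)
We work on the intersection of two high-probability events. The first, $\mathcal{E}_1$, says that $\operatorname{med}_r|\widehat{f}_{N,r}(\bm{k})|^2\le \frac{(2-\delta)\Lambda_2}{\delta(1-\delta)N}$ for every $\bm{k}\in K\setminus H_N'$. The second, $\mathcal{E}_2$, says that $\operatorname{med}_r|\widehat{f}_{N,r}(\bm{k})|^2> |\widehat{f}(\bm{k})|^2/2$ for every $\bm{k}\in H_N'$ satisfying the threshold \eqref{condition_hat_f}. A union bound with Corollary~\ref{cor_K_minus_H}, summed over at most $|K|$ indices, and Lemma~\ref{lemma_with_condition}, summed over at most $N$ indices, gives $\Pr(\mathcal{E}_1\cap\mathcal{E}_2)\ge 1-\delta_2$. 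The trivial case $|K|<N$ gives $\mathrm{III}=0$, so we assume $|K|\ge N$.

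Set $\tau_{\bm{k}}:=\operatorname{med}_r|\widehat{f}_{N,r}(\bm{k})|$ and let $\tau^{E}_{\bm{k}}$ be its finite-precision analogue. By Theorem~\ref{thm_dis}, $\big||\widehat{f}_{N,r}|-|\widehat{f}^{E,\epsilon}_{N,r}|\big|\le\epsilon_1$. Lemma~\ref{lemma_median_perserve} then gives $|\tau_{\bm{k}}-\tau^E_{\bm{k}}|\le\epsilon_1$; we work with medians of absolute values, not squares, so that this perturbation argument applies.

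Split $H_N'\setminus H^{E,\epsilon}_N$ into the indices $\bm{k}'$ that violate \eqref{condition_hat_f} and those that satisfy it. There are at most $N$ violators, and each contributes less than $G^2\Gamma^{2\theta(\alpha+\lambda)}N^{-2\theta(\alpha+\lambda+1/2)}\|f\|^2$. Their total is therefore at most $G^2\Gamma^{2\theta(\alpha+\lambda)}N^{1-\theta-2\theta(\alpha+\lambda)}\|f\|^2$, which is the first term of the claimed bound.

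For the indices that satisfy \eqref{condition_hat_f}, we use a pairing argument. Since $|H_N'|=|H^{E,\epsilon}_N|=N$, we have $|H_N'\setminus H^{E,\epsilon}_N|=|H^{E,\epsilon}_N\setminus H_N'|$, so we can fix a bijection $\bm{k}'\mapsto\bm{h}$ onto $H^{E,\epsilon}_N\setminus H_N'\subseteq K\setminus H_N'$. Because $\bm{h}$ was selected and $\bm{k}'$ was not, $\tau^E_{\bm{k}'}\le\tau^E_{\bm{h}}$, and hence $\tau_{\bm{k}'}\le\tau_{\bm{h}}+2\epsilon_1$. On $\mathcal{E}_2$ we get $|\widehat{f}(\bm{k}')|^2/2<\tau_{\bm{k}'}^2\le(\tau_{\bm{h}}+2\epsilon_1)^2$. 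On $\mathcal{E}_1$, $\tau_{\bm{h}}^2\le\frac{(2-\delta)\Lambda_2}{\delta(1-\delta)N}$.

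To expand $(\tau_{\bm{h}}+2\epsilon_1)^2\le\tau_{\bm{h}}^2+4\epsilon_1\tau_{\bm{h}}+4\epsilon_1^2$, we bound the cross term using $\tau_{\bm{h}}\le\|f\|_\infty$, which holds because each $|\widehat{f}_{N,r}|$ is an average of values bounded by $\|f\|_\infty$. This gives
\begin{equation*}
|\widehat{f}(\bm{k}')|^2\le \frac{2(2-\delta)\Lambda_2}{\delta(1-\delta)N}+8\epsilon_1\|f\|_\infty+8\epsilon_1^2 .
\end{equation*}
There are at most $N$ such $\bm{k}'$. Summing over them and applying Lemma~\ref{lemma_lambda} to $\Lambda_2$ yields the last term of the claimed bound plus $N(8\epsilon_1\|f\|_\infty+8\epsilon_1^2)\le 4N\epsilon_1(2\|f\|_\infty+\epsilon_1)\cdot$const. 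The constant in the $\epsilon_1$ term is where the bookkeeping needs the most care. To reach exactly $4N\epsilon_1(2\|f\|_\infty+\epsilon_1)$, we would compare $\tau^2$ directly instead of squaring a sum: use $|\tau_{\bm{k}'}^2-\tau^{E\,2}_{\bm{k}'}|\le\epsilon_1(2\|f\|_\infty+\epsilon_1)$, apply the same bound at $\bm{h}$, and multiply by the factor $2$ coming from $\mathcal{E}_2$.

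The main obstacle is this selection step. Comparing the two medians across different indices via the pairing, together with the $\epsilon_1$ perturbation, is the delicate part. The probabilistic estimates themselves are already supplied by Corollary~\ref{cor_K_minus_H} and Lemma~\ref{lemma_with_condition}.
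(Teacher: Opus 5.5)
Your proposal is correct and follows the paper's proof essentially step for step: the same union bound over Corollary~\ref{cor_K_minus_H} and Lemma~\ref{lemma_with_condition}, the same split of $H_N'\setminus H^{E,\epsilon}_N$ according to the threshold \eqref{condition_hat_f}, and the same comparison of each non-selected $\bm{k}'$ with an element of $H^{E,\epsilon}_N\setminus H_N'$ (your bijection guarantees that such an element exists). Your first expansion of $(\tau_{\bm{h}}+2\epsilon_1)^2$ only gives $N(8\epsilon_1\Vert f\Vert_\infty+8\epsilon_1^2)$, so reaching the stated $4N\epsilon_1(2\Vert f\Vert_\infty+\epsilon_1)$ does require the fallback you describe, namely the squared-median perturbation bound $\epsilon_1(2\Vert f\Vert_{\infty}+\epsilon_1)$ applied at both $\bm{k}'$ and $\bm{h}$, which is exactly the paper's argument.
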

\begin{proof}
    Applying Corollary \ref{cor_K_minus_H} to all $\bm{k}\in K\setminus H_N'$ and Lemma \ref{lemma_with_condition} to all $\bm{k}'\in H_N'$ satisfying (\ref{condition_hat_f}), a union bound argument shows that, with probability at least $1-\delta_2$, we have:
    \begin{itemize}
        \item For all $\bm{k}\in K\setminus H_N'$,
        \begin{equation*}
            \underset{r\in 1:R}{\operatorname{med}}|\widehat{f}_{N,r}(\bm{k})|^2\le\frac{(2-\delta)\Lambda_2}{\delta(1-\delta)N}.
        \end{equation*}
        \item For all $\bm{k}'\in H_N'$ satisfying (\ref{condition_hat_f}),
        \begin{equation*}
            \underset{r\in 1:R}{\operatorname{med}}|\widehat{f}_{N,r}(\bm{k}')|^2> \frac{|\widehat{f}(\bm{k}')|^2}{2}.
        \end{equation*}
    \end{itemize}
    Now, for any $\bm{k}'\in H_N'\setminus H^{E,\epsilon}_N$, either
    \begin{equation*}
        |\widehat{f}(\bm{k}')|< G_{\alpha,\lambda,\beta,\bm{\gamma},\delta,\theta} \frac{\Gamma_{\alpha,\lambda,\beta,\bm{\gamma}}(m)^{\theta(\alpha+\lambda)}}{N^{\theta(\alpha+\lambda+1/2)}}\Vert f\Vert_{s,\alpha,\lambda},
    \end{equation*}
    or, otherwise, we have $|\widehat{f}(\bm{k}')|^2/2<\underset{r\in 1:R}{\operatorname{med}}|\widehat{f}_{N,r}(\bm{k}')|^2$. Under the latter condition, for any $\bm{k}\in H^{E,\epsilon}_N\setminus H_N'$, by the definition of $H^{E,\epsilon}_N$,
    \begin{align*}
        \underset{r\in 1:R}{\operatorname{med}}|\widehat{f}^{E,\epsilon}_{N,r}(\bm{k}')|^2\le \underset{r\in 1:R}{\operatorname{med}}|\widehat{f}^{E,\epsilon}_{N,r}(\bm{k})|^2.
    \end{align*}
    Under condition \eqref{condition_E}, we apply Theorem \ref{thm_dis} with $\widetilde{\bm{k}} \in \{\bm{k}, \bm{k}'\}$ to obtain:
    \begin{equation*}
        \left||\widehat{f}^{E,\epsilon}_{N,r}(\widetilde{\bm{k}})|- |\widehat{f}_{N,r}(\widetilde{\bm{k}})|\right|\le \epsilon_1.
    \end{equation*}
    So
    \begin{align}
            \left||\widehat{f}^{E,\epsilon}_{N,r}(\widetilde{\bm{k}})|^2- |\widehat{f}_{N,r}(\widetilde{\bm{k}})|^2\right|
            &=\left||\widehat{f}^{E,\epsilon}_{N,r}(\widetilde{\bm{k}})|-|\widehat{f}_{N,r}(\widetilde{\bm{k}})|\right|\left(|\widehat{f}^{E,\epsilon}_{N,r}(\widetilde{\bm{k}})|+|\widehat{f}_{N,r}(\widetilde{\bm{k}})|\right)\nonumber\\
            &\le \epsilon_1\left(2|\widehat{f}_{N,r}(\widetilde{\bm{k}})|+\epsilon_1\right) \le \epsilon_1\left(2\Vert f\Vert_{\infty}+\epsilon_1\right).\nonumber
    \end{align}
    According to Lemma \ref{lemma_median_perserve}, we have
    \begin{align*}
        \left|\underset{r\in 1:R}{\operatorname{med}}|\widehat{f}_{N,r}(\widetilde{\bm{k}})|^2-\underset{r\in 1:R}{\operatorname{med}}|\widehat{f}^{E,\epsilon}_{N,r}(\widetilde{\bm{k}})|^2\right| \le\epsilon_1\left(2\Vert f\Vert_{\infty}+\epsilon_1\right).
    \end{align*}
    Hence, we have 
    \begin{align*}
        \frac{|\widehat{f}(\bm{k}')|^2}{2}&<\underset{r\in 1:R}{\operatorname{med}}|\widehat{f}_{N,r}(\bm{k}')|^2\le\underset{r\in 1:R}{\operatorname{med}}|\widehat{f}^{E,\epsilon}_{N,r}(\bm{k}')|^2 + \epsilon_1\left(2\Vert f\Vert_{\infty}+\epsilon_1\right)\\
        &\le \underset{r\in 1:R}{\operatorname{med}}|\widehat{f}^{E,\epsilon}_{N,r}(\bm{k})|^2+\epsilon_1\left(2\Vert f\Vert_{\infty}+\epsilon_1\right)\\
        &\le \underset{r\in 1:R}{\operatorname{med}}|\widehat{f}_{N,r}(\bm{k})|^2+2\epsilon_1\left(2\Vert f\Vert_{\infty}+\epsilon_1\right)\\
        &\le \frac{(2-\delta)\Lambda_2}{\delta(1-\delta)N}+2\epsilon_1\left(2\Vert f\Vert_{\infty}+\epsilon_1\right).
    \end{align*}
    Therefore, 
    \begin{align*}
        &\sum_{\bm{k}'\in H_N'\setminus H^{E,\epsilon}_N}|\widehat{f}(\bm{k}')|^2\\
        &\le  N G_{\alpha,\lambda,\beta,\bm{\gamma},\delta,\theta}^2 \frac{\Gamma_{\alpha,\lambda,\beta,\bm{\gamma}}(m)^{2\theta(\alpha+\lambda)}}{N^{2\theta(\alpha+\lambda+1/2)}}\Vert f\Vert_{s,\alpha,\lambda}^2
        + N \left(\frac{2(2-\delta)\Lambda_2}{\delta(1-\delta)N}+4\epsilon_1\left(2\Vert f\Vert_{\infty}+\epsilon_1\right)\right)\\
        &\le G_{\alpha,\lambda,\beta,\bm{\gamma},\delta,\theta}^2\frac{ \Gamma_{\alpha,\lambda,\beta,\bm{\gamma}}(m)^{2\theta(\alpha+\lambda)}  }{N^{2\theta(\alpha+\lambda)+\theta-1} } \Vert f\Vert_{s,\alpha,\lambda}^2
        +4N\epsilon_1\left(2\Vert f\Vert_{\infty}+\epsilon_1\right)\\
        &\quad+\frac{2(2-\delta)}{\delta(1-\delta)}\Upsilon_{\alpha,\lambda,\beta, \bm{\gamma},\delta,\theta}\frac{\Gamma_{\alpha,\lambda,\beta,\bm{\gamma}}(m)^{2\theta(\alpha+\lambda)}}{N^{2\theta(\alpha+\lambda)}}\Vert f\Vert_{s,\alpha,\lambda}^2,
    \end{align*}
    where the last inequality used Lemma \ref{lemma_lambda}.
\end{proof}

Finally, by substituting the bounds from Theorems \ref{thm_term1}, \ref{thm_term2}, and \ref{thm_term3} back into the decomposition \eqref{decomp}, and applying the union bound, we have that with probability at least $1-\delta_1-\delta_2\ge 1-\delta'$,
\begin{align*}
    \Vert A^{E,\epsilon}_m(f)-f \Vert_{L^2}^2&\le \frac{6-3\delta-\delta^2}{\delta(1-\delta)}
        \Gamma_{\alpha,\lambda,\beta,\bm{\gamma},\delta,\theta}\frac{\Gamma_{\alpha,\lambda,\beta,\bm{\gamma}}(m)^{2\theta(\alpha+\lambda)}}{N^{2\theta(\alpha+\lambda)}}\Vert f\Vert_{s,\alpha,\lambda}^2+6N\epsilon_1^2\\
        &\quad+G_{\alpha,\lambda,\beta,\bm{\gamma},\delta,\theta}^2\frac{ \Gamma_{\alpha,\lambda,\beta,\bm{\gamma}}(m)^{2\theta(\alpha+\lambda)}  }{N^{2\theta(\alpha+\lambda)+\theta-1} }\Vert f\Vert_{s,\alpha,\lambda}^2+8N\epsilon_1 \Vert f\Vert_{\infty}\\
        &\le 
        C_{\alpha,\lambda,\beta,\bm{\gamma},\delta,\theta}\frac{ \Gamma_{\alpha,\lambda,\beta,\bm{\gamma}}(m)^{2\theta(\alpha+\lambda)}  }{N^{2\theta(\alpha+\lambda)+\theta-1} }\Vert f\Vert_{s,\alpha,\lambda}^2+8N\epsilon_1(\Vert f\Vert_{\infty}+\epsilon_1),
\end{align*}
which concludes the proof of Theorem \ref{thm_main}.

\section{Proof of Theorem~\ref{thm_card_univ}}\label{appendix_thm_card_univ}

Our proof relies on the following two lemmas.

\begin{lemma}\label{lemma_K^alpha}
There exists a constant $C\ge 1$ such that for all $\alpha\in \mathbb{N}$ and $\tau\ge 0$,
$$|K^{\operatorname{univ}}_{\alpha}(\tau)|\le 2^m\prod_{j=1}^s\left(1+Cj^{-\tau}(1+m)\right).$$
\end{lemma}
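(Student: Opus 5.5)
The plan is to repeat the proof of Theorem~\ref{thm_card_alpha} with $(\alpha,\lambda)$ replaced by $(\alpha-1,1)$, so that $\alpha+\lambda=\alpha$. The one new issue is that the constants $A_{\alpha-1,1}$ and $B_{\alpha-1,1}$ must be compared with the normalizer $B_{\alpha-1,0}$ that appears in $T^{\operatorname{univ}}_{u,m,\alpha}(\tau)$, and this comparison must hold uniformly in $\alpha$.

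By Lemma~\ref{lemma_K_u(T)} with parameters $(\alpha-1,1)$, we have $T^{\operatorname{univ}}_{u,m,\alpha}(\tau)/\alpha\le m$. Then
\[
|K^{\operatorname{univ}}_{\alpha}(\tau)|\le \sum_{u\subseteq 1{:}s}2^{m}\frac{(A_{\alpha-1,1}m+B_{\alpha-1,1})^{|u|}}{1+B_{\alpha-1,0}^{|u|}}\prod_{j\in u}j^{-\tau}.
\]
If $T^{\operatorname{univ}}_{u,m,\alpha}(\tau)<0$, the set $K_{u,\alpha-1,1}(T^{\operatorname{univ}}_{u,m,\alpha}(\tau))$ is empty and the bound holds trivially. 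Otherwise we apply the bound of Lemma~\ref{lemma_K_u(T)} with $\max\{T/\alpha,1\}\le m$, using $m\ge1$.

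For $\alpha=1$ we have $A_{0,1}=1$ and $B_{0,1}=0$. The summand is then $m^{|u|}\prod_{j\in u}j^{-\tau}$, and summing over $u$ gives $2^m\prod_{j=1}^{s}(1+j^{-\tau}m)$. For $\alpha\ge2$ we bound
\[
\frac{(A_{\alpha-1,1}m+B_{\alpha-1,1})^{|u|}}{1+B_{\alpha-1,0}^{|u|}}\le \Big(\frac{\max(A_{\alpha-1,1},B_{\alpha-1,1})}{B_{\alpha-1,0}}\Big)^{|u|}(1+m)^{|u|}.
\]
After the product expansion this yields the claimed bound with
\[
C=\sup_{\alpha\ge2}\max(A_{\alpha-1,1},B_{\alpha-1,1})/B_{\alpha-1,0}.
\]
The remaining work is to show that $C<\infty$.

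The main obstacle is that $B_{\alpha-1,0}$ is defined through the formula \eqref{constant_A_B} with exponent $1/(\alpha-1)$, while $A_{\alpha-1,1}$ and $B_{\alpha-1,1}$ use exponent $1/\alpha$. I would set $x=2^{1/\alpha}-1$ and $y=2^{1/(\alpha-1)}-1$, so that $y>x$. Both are of order $\log 2/\alpha$, and one checks that $y/x\le 1+c/\alpha$ for an absolute constant $c$. Term by term,
\[
\frac{1}{a'!\,x^{a'}}=\frac{(y/x)^{a'}}{a'!\,y^{a'}}\qquad\text{and}\qquad (y/x)^{a'}\le (1+c/\alpha)^{\alpha}\le e^{c}\quad\text{for } a'\le \alpha-1.
\]
It follows that
\[
B_{\alpha-1,1}\le e^{c}B_{\alpha-1,0}\qquad\text{and}\qquad A_{\alpha-1,1}\le e^{c}\frac{1}{(\alpha-1)!\,y^{\alpha-1}}\le e^{c}B_{\alpha-1,0}.
\]
Hence $C\le e^{c}$, which is absolute. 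The bound $y/x\le 1+c/\alpha$ itself follows from elementary estimates $t\log2\le 2^t-1\le t\log2\,(1+t)$ for $t\in(0,1]$. This gives the lemma with $C=\max(1,e^{c})$.
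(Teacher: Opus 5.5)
Your proof is correct, and its skeleton matches the paper's. Both reduce the lemma to the argument of Theorem~\ref{thm_card_alpha} with $(\alpha,\lambda)$ replaced by $(\alpha-1,1)$. The only new ingredient is then a bound $\max(A_{\alpha-1,1},B_{\alpha-1,1})\le C\,B_{\alpha-1,0}$ that is uniform in $\alpha\ge 2$.

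Where you differ is in how this uniform comparison is proved. Both arguments control the same ratio $\eta=(2^{1/(\alpha-1)}-1)/(2^{1/\alpha}-1)>1$; the paper's $x,y$ are the reciprocals of your $y,x$.

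\textbf{The paper's route.} It writes $B_{\alpha-1,1}$ and $B_{\alpha-1,0}$ as truncated exponential series. It notes that the termwise ratios $\eta^k$ are at most $\eta^{\alpha}$ for $k<\alpha$ and at least $\eta^{\alpha}$ for $k\ge\alpha$. A mediant argument then bounds $B_{\alpha-1,1}/B_{\alpha-1,0}$ by the ratio of the full series $e^{y}-1$ and $e^{x}-1$. Computing the limsup of that ratio gives $e^{1/\log 2}$. A uniform $C$ then exists non-constructively, since each ratio is finite and the limsup is finite. Separately, $A_{\alpha-1,1}\le B_{\alpha-1,1}$ is read off the definition.

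\textbf{Your route.} You bound $\eta$ directly using $t\log 2\le 2^t-1\le t\log 2\,(1+t)$. This gives $\eta\le(\alpha/(\alpha-1))^2\le 1+6/\alpha$, hence $\eta^{k}\le(1+6/\alpha)^{\alpha}\le e^{6}$ for every $k\le\alpha-1$. The comparison then holds term by term, including for $A_{\alpha-1,1}$ against the last term of $B_{\alpha-1,0}$.

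\textbf{Comparison.} Your argument is shorter and fully quantitative, giving an explicit constant $C\le e^{6}$. The paper's argument identifies the sharper asymptotic value $e^{1/\log 2}$ but yields only an implicit uniform constant. For the lemma as stated, either suffices.
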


\begin{proof}
   It suffices to prove $A_{\alpha-1,1}\le B_{\alpha-1,1}\le C B_{\alpha-1,0}$ for $\alpha\ge 2$, since the remaining argument is analogous to that used in Theorem~\ref{thm_card_alpha}. 

   For $\alpha\ge 2$, let $x = (2^{1/(\alpha-1)}-1)^{-1}$, $y = (2^{1/\alpha}-1)^{-1}$, and $\eta = y/x>1$. Then
    \begin{equation*}
        \frac{B_{\alpha-1,1}}{B_{\alpha-1,0}}=\frac{\sum_{k=1}^{\alpha-1}y^k/k! }{\sum_{k=1}^{\alpha-1}x^k/k!}\le \eta^\alpha\le \frac{\sum_{k=\alpha}^{\infty}y^k/k! }{\sum_{k=\alpha}^{\infty}x^k/k!}.
    \end{equation*}
     Thus,
    \begin{equation*}
        \frac{B_{\alpha-1,1}}{B_{\alpha-1,0}}\le \frac{B_{\alpha-1,1}+ \sum_{k=\alpha}^{\infty}y^k/k!}{B_{\alpha-1,0}+\sum_{k=\alpha}^{\infty}x^k/k! }=\frac{e^y-1}{e^x-1}
    \end{equation*}
   Taking the $\limsup$ as $\alpha\to\infty$ yields
   \begin{align*}
       \limsup_{\alpha\to\infty} \frac{B_{\alpha-1,1}}{B_{\alpha-1,0}}&\le \lim_{\alpha\to\infty}\frac{e^{(2^{1/\alpha}-1)^{-1}} -1}{e^{(2^{1/(\alpha-1)}-1)^{-1}} -1}\\
       &=\lim_{\alpha\to\infty} \exp\left(\frac{1}{2^{1/\alpha}-1}-\frac{1}{2^{1/(\alpha-1)}-1}\right) =e^{1/\log 2}.
   \end{align*}
   Hence, $B_{\alpha-1,1}\le C B_{\alpha-1,0}$ for some constant $C\ge 1$. The bound $A_{\alpha-1,1}\le B_{\alpha-1,1}$ follows directly from (\ref{constant_A_B}).
\end{proof}

\begin{lemma}\label{K^alpha}
Given $\tau\geq 0$, if the weights $\gamma_u$ satisfy (\ref{restrict_gamma}) for some $\alpha\in\mathbb{N}_0$ and $\lambda\in(0,1]$, then  
    $$K_{m,\alpha,\lambda,\beta,\bm{\gamma},\delta}\subseteq  K^{\operatorname{univ}}_{\alpha}(\tau)\bigcup K^{\operatorname{univ}}_{\alpha+1}(\tau)$$
    with $\delta\in (0,1/2)$ and $\beta=\Gamma^{\frac{1}{\alpha+\lambda+1/2}}\max(1,B_{\alpha,\lambda})$.
\end{lemma}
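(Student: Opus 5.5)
The plan is to prove the stronger inclusion $K_{m,\alpha,\lambda,\beta,\bm{\gamma},\delta}\subseteq K^{\operatorname{univ}}_{\alpha+1}(\tau)$; the stated union then follows trivially, which also covers the case $\alpha=0$ where $K^{\operatorname{univ}}_0(\tau)=\emptyset$. Recall that $K^{\operatorname{univ}}_{\alpha+1}(\tau)$ is the union over $u$ of $K_{u,\alpha,1}(T^{\operatorname{univ}}_{u,m,\alpha+1}(\tau))$. It therefore suffices to show, for each $u\subseteq 1{:}s$, that
\[
K_{u,\alpha,\lambda}(T_{u,m,\alpha,\lambda,\beta,\bm{\gamma},\delta})\subseteq K_{u,\alpha,1}\bigl(T^{\operatorname{univ}}_{u,m,\alpha+1}(\tau)\bigr).
\]
I will establish this in two steps: a pointwise comparison of the weights $\mu_{\alpha,1}$ and $\mu_{\alpha,\lambda}$, followed by a comparison of the thresholds.

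\textbf{Step 1 (weight comparison).} Write $k=\sum_{j=1}^v 2^{c_j-1}$ with $c_1>\cdots>c_v\ge 1$. If $v\le\alpha$, then $\mu_{\alpha,1}(k)=\mu_{\alpha,\lambda}(k)$. If $v>\alpha$, then
\[
\mu_{\alpha,1}(k)=\mu_{\alpha,\lambda}(k)+(1-\lambda)c_{\alpha+1}.
\]
Since $c_j>c_{\alpha+1}$ for every $j\le\alpha$, we have $\mu_{\alpha,\lambda}(k)\ge(\alpha+\lambda)c_{\alpha+1}$. Substituting this bound gives
\[
\mu_{\alpha,1}(k)\le\Bigl(1+\tfrac{1-\lambda}{\alpha+\lambda}\Bigr)\mu_{\alpha,\lambda}(k)=\tfrac{\alpha+1}{\alpha+\lambda}\,\mu_{\alpha,\lambda}(k).
\]
This inequality also holds trivially when $v\le\alpha$, because the factor $\tfrac{\alpha+1}{\alpha+\lambda}$ is at least $1$. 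Summing over the coordinates $j\in u$ yields
\[
K_{u,\alpha,\lambda}(T)\subseteq K_{u,\alpha,1}\bigl(\tfrac{\alpha+1}{\alpha+\lambda}T\bigr)\quad\text{for all } T.
\]
The case $T<0$ is handled by the convention that both sets are empty.

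\textbf{Step 2 (threshold comparison).} It remains to show
\[
\frac{T_{u,m,\alpha,\lambda,\beta,\bm{\gamma},\delta}}{\alpha+\lambda}\le\frac{T^{\operatorname{univ}}_{u,m,\alpha+1}(\tau)}{\alpha+1}=m-\log_2\bigl(1+B_{\alpha,0}^{|u|}\bigr)-\tau\sum_{j\in u}\log_2 j .
\]
I will follow the proof of Theorem~\ref{thm_card_alpha} essentially verbatim. Using $\Gamma_{\alpha,\lambda,\beta,\bm{\gamma}}(m)\ge1$ and the weight condition \eqref{restrict_gamma}, the left side is at most the right side plus
\[
\log_2\bigl(\delta(1+B_{\alpha,0}^{|u|})\bigr)+|u|\Bigl(\tfrac{\log_2\Gamma}{\alpha+\lambda+1/2}-\log_2\beta\Bigr).
\]
The only new ingredient compared with Theorem~\ref{thm_card_alpha} is the bound $B_{\alpha,0}\le B_{\alpha,\lambda}$. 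This holds because $2^{1/\alpha}-1\ge 2^{1/(\alpha+\lambda)}-1$, so every term in \eqref{constant_A_B} for $B_{\alpha,0}$ is at most the corresponding term for $B_{\alpha,\lambda}$. For $\alpha=0$ the convention $B_{0,0}=0$ makes the bound immediate. With $\delta<1/2$, we then get
\[
\delta\bigl(1+B_{\alpha,0}^{|u|}\bigr)\le\tfrac12\bigl(1+\max(1,B_{\alpha,\lambda})^{|u|}\bigr)\le\max(1,B_{\alpha,\lambda})^{|u|}.
\]
With the choice $\beta=\Gamma^{1/(\alpha+\lambda+1/2)}\max(1,B_{\alpha,\lambda})$, the extra terms are therefore nonpositive. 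Multiplying the resulting inequality by $\alpha+1$ and applying Step 1 with $T=T_{u,m,\alpha,\lambda,\beta,\bm{\gamma},\delta}$ gives the per-$u$ inclusion, and taking the union over $u$ proves the claim.

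The main obstacle is Step 1, specifically the realisation that the mismatch between the scaling factors $\alpha+\lambda$ and $\alpha+1$ is exactly absorbed by the inequality $c_{\alpha+1}\le\mu_{\alpha,\lambda}(k)/(\alpha+\lambda)$. A first attempt would compare against $K^{\operatorname{univ}}_\alpha$ via $\mu_{\alpha-1,1}\le\mu_{\alpha,\lambda}$. That route fails because it would require $\mu_{\alpha-1,1}\le\tfrac{\alpha}{\alpha+\lambda}\mu_{\alpha,\lambda}$, which is false for indices with few nonzero bits. Step 2 is then routine bookkeeping.
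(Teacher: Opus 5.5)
Your proof is correct, and it takes a genuinely different and stronger route than the paper.

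\textbf{The paper's route.} For $\alpha\in\mathbb{N}$, the paper uses the exact convex decomposition $\mu_{\alpha,\lambda}=\lambda\mu_{\alpha,1}+(1-\lambda)\mu_{\alpha-1,1}$. It pairs this with the threshold chain $T^{\operatorname{univ}}_{u,m,\alpha,\lambda}(\tau)/(\alpha+\lambda)\le T^{\operatorname{univ}}_{u,m,\alpha+1}(\tau)/(\alpha+1)\le T^{\operatorname{univ}}_{u,m,\alpha}(\tau)/\alpha$, which needs both $B_{\alpha,0}\le B_{\alpha,\lambda}$ and the extra comparison $B_{\alpha-1,0}\le B_{\alpha,0}$. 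This gives $\mu_{\alpha,\lambda}(\bsk)\le\lambda T^{\operatorname{univ}}_{u,m,\alpha+1}(\tau)+(1-\lambda)T^{\operatorname{univ}}_{u,m,\alpha}(\tau)$, so at least one of the two memberships must hold, and only a union can be concluded. The case $\alpha=0$ is treated separately via $\mu_{0,\lambda}=\lambda\mu_{0,1}$.

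\textbf{Your route.} You replace the convex decomposition with the one-sided bound $\mu_{\alpha,1}\le\frac{\alpha+1}{\alpha+\lambda}\mu_{\alpha,\lambda}$. It uses the ordering $c_1>\dots>c_{\alpha+1}$ to bound $c_{\alpha+1}$ by the weighted average, and it reduces exactly to the paper's identity when $\alpha=0$. This buys three things:
\begin{itemize}
\item a uniform argument for all $\alpha\in\mathbb{N}_0$;
\item only the single comparison $B_{\alpha,0}\le B_{\alpha,\lambda}$ in Step~2;
\item the sharper inclusion $K_{m,\alpha,\lambda,\beta,\bm{\gamma},\delta}\subseteq K^{\operatorname{univ}}_{\alpha+1}(\tau)$, which shows the $K^{\operatorname{univ}}_{\alpha}(\tau)$ term in the union is unnecessary.
\end{itemize}

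The paper's $\alpha=0$ case ends by writing $K^{\operatorname{univ}}_0(\tau)$, which is empty by definition. Its computation actually establishes inclusion in $K^{\operatorname{univ}}_1(\tau)$, consistent with your result.

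For the lemma's actual use, namely $K_{m,\alpha,\lambda,\beta,\bm{\gamma},\delta}\subseteq K^{\operatorname{univ}}(\tau)$ in Theorem~\ref{thm_card_univ}, either version suffices.
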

\begin{proof}
    For $\alpha=0$ and any $\lambda\in (0,1]$, we have $B_{0,\lambda}=0$ in (\ref{constant_A_B}). According to (\ref{T_u_m}) and the fact that $\mu_{0,\lambda}(k)=\lambda\mu_{0,1}(k)$ for $k\in\mathbb{N}$, we have 
    \begin{align*}
    K_{u,0,\lambda}(T_{u,m,0,\lambda,\beta,\bm{\gamma},\delta})& \subseteq K_{u,0,\lambda}(T^{\operatorname{univ}}_{u,m,0,\lambda}(\tau))\\
    &=\left\{\bm{k}\in \mathbb{N}_0^s:\bm{s}(\bm{k})=u,\mu_{0,\lambda}(\bsk)\le T^{\operatorname{univ}}_{u,m,0,\lambda}(\tau) \right\}\\
    &=\left\{\bm{k}\in \mathbb{N}_0^s:\bm{s}(\bm{k})=u,\mu_{0,1}(\bsk)\le T^{\operatorname{univ}}_{u,m,1}(\tau) \right\}\\
    &=K_{u,0,1}\left(T^{\operatorname{univ}}_{u,m,1}(\tau)\right).
\end{align*}
It follows that $K_{m,0,\lambda,\beta,\bm{\gamma},\delta}\subseteq  K^{\operatorname{univ}}_{0}(\tau)$.

Now we consider $\alpha\in\mathbb{N}$. 
According to (\ref{constant_A_B}), we have $B_{\alpha-1,1}\leq B_{\alpha,0}\leq B_{\alpha,\lambda}$ for all $\alpha\in\mathbb{N}$ and $\lambda\in (0,1]$. Therefore,
 \begin{align*}
\frac{T^{\operatorname{univ}}_{u,m,\alpha,\lambda}(\tau)}{\alpha+\lambda}=&m -\log_2\left(1+B_{\alpha,\lambda}^{|u|} \right)-\tau\sum_{j\in u}\log_2(j) \\
\le &m -\log_2\left(1+ B_{\alpha,0}^{|u|} \right)-\tau\sum_{j\in u}\log_2(j)=\frac{T^{\operatorname{univ}}_{u,m,\alpha+1}(\tau)}{\alpha+1}\\
\le &m -\log_2\left(1+ B_{\alpha-1,0}^{|u|} \right)-\tau\sum_{j\in u}\log_2(j)=\frac{T^{\operatorname{univ}}_{u,m,\alpha}(\tau)}{\alpha}.
 \end{align*}
For every $\bsk\in K_{u,\alpha,\lambda}(T_{u,m,\alpha,\lambda,\beta,\bm{\gamma},\delta})$, we use $\lambda(\alpha+1)+(1-\lambda)\alpha=\alpha+\lambda$ and derive
\begin{align*}
 \mu_{\alpha,\lambda}(\bsk)&=\lambda\mu_{\alpha,1}(\bsk)+(1-\lambda)\mu_{\alpha-1,1}(\bsk)\leq T_{u,m,\alpha,\lambda,\beta,\bm{\gamma},\delta}\leq T^{\operatorname{univ}}_{u,m,\alpha,\lambda}(\tau)\\
 &\leq \lambda T^{\operatorname{univ}}_{u,m,\alpha+1}(\tau)+(1-\lambda)T^{\operatorname{univ}}_{u,m,\alpha}(\tau).    
\end{align*}
Hence, either $\mu_{\alpha,1}(\bsk)\leq T^{\operatorname{univ}}_{u,m,\alpha+1}(\tau)$ or $\mu_{\alpha-1,1}(\bsk)\leq T^{\operatorname{univ}}_{u,m,\alpha}(\tau)$, which implies our conclusion.
\end{proof}

\begin{proof}[Proof of Theorem~\ref{thm_card_univ}]
     The second assertion follows directly from Lemma \ref{K^alpha}. To prove the first assertion, we show that for any nonempty $u\subseteq 1{:}s$, there are only finitely many $\alpha\in\mathbb{N}_0$ such that $T^{\operatorname{univ}}_{u,m,\alpha}(\tau)\ge 0$. 
    
    For sufficiently large $\alpha$, the inequality $2^{1/\alpha}-1\le 2/\alpha$ yields
    \begin{equation*}
        B_{\alpha,0}=\sum_{k=1}^{\alpha}\frac{1}{k!(2^{1/\alpha}-1)^k}\ge \sum_{k=1}^{\alpha}\frac{1}{k!}\left(\frac{\alpha}{2}\right)^k\ge \sum_{k=1}^{\lfloor \alpha/2\rfloor }\frac{1}{k!}(\lfloor \alpha/2\rfloor)^k.
    \end{equation*}
    It follows that $\liminf_{\alpha\to\infty}e^{-\lfloor \alpha/2\rfloor}B_{\alpha,0}\ge 1/2$ due to the limit
    $$\lim_{n\to\infty}e^{-n}\sum_{k=0}^{n}\frac{n^k}{k!}=\frac{1}{2}.$$
    (For a one-line proof, apply the Central Limit Theorem to Poisson distribution with mean $n$). 
    Consequently, there exists $\alpha_0>0$ such that for all $\alpha>\alpha_0$, 
    \begin{equation*}
        B_{\alpha,0}\ge \frac{e^{\lfloor \alpha/2\rfloor}}{4}\ge 1.
    \end{equation*}
    For such $\alpha$ and $u\ne \emptyset$, it follows that
    \begin{align*}
        \frac{T^{\operatorname{univ}}_{u,m,\alpha+1}(\tau)}{\alpha+1}\le m-\log_2\left(1+B_{\alpha,0}^{|u|} \right)
        \le m-\log_2(B_{\alpha,0}) \le m+2-\lfloor \alpha/2\rfloor\log_2 e.
    \end{align*}
    Therefore, for $\alpha\geq \max\{\alpha_0,2m+4\}$, we have $m+2-\lfloor \alpha/2\rfloor\log_2 e< 0$, and $T^{\operatorname{univ}}_{u,m,\alpha+1}(\tau)<0$ for all $u\neq \emptyset$ and $\tau\ge 0$.
    
    Hence, $K^{\operatorname{univ}}_{\alpha+1}(\tau)=\{\bm{0}\}$ for all $\tau\ge 0$ whenever $\alpha\ge (\alpha_0+6) m $. Applying Lemma \ref{lemma_K^alpha} yields the bound
    \begin{equation*}
        |K^{\operatorname{univ}}(\tau)|\le \sum_{\alpha=1}^{\lfloor (\alpha_0+6) m \rfloor}|K^{\operatorname{univ}}_\alpha(\tau)|\le (\alpha_0+6) m2^m\prod_{j=1}^s\left(1+Cj^{-\tau}(1+m)\right). 
    \end{equation*}
    The proof is complete after taking $C^{\operatorname{univ}}=\max(\alpha_0+6,C)$.
\end{proof}

\section{Proof of Proposition \ref{prop_gray_code}}\label{appendix_gray_code}
\begin{proof}
For the first assertion, let $R_j^i = \mu_{\alpha-1,1}(k_j^i)$ be the associated compositions. We consider the following two cases. If $(R_1^1, \dots, R_{|u|}^1) = (R_1^2, \dots, R_{|u|}^2)$, let $j$ be the first index such that $\bm{w}_j^1 \neq \bm{w}_j^2$. For each $i > j$, $\bm{w}_i^2$ is the first tuple in the ordering, namely $(R_i^1, 0, \dots, 0)$, while $\bm{w}_i^1$ is the last: either $(R_i^1-1, 1, 0, \dots, 0)$ if \eqref{R_w_eq} admits multiple solutions, or $(R_i^1, 0, \dots, 0)$ otherwise. In this case, $\bm{w}_i^1$ and $\bm{w}_i^2$ differ in at most 2 positions for $i > j$, while $\bm{w}_j^1$ and $ \bm{w}_j^2$ differ in at most $\alpha$ positions. Thus, the sequences $(\bm{w}_1^1, \dots, \bm{w}_{|u|}^1)$ and $(\bm{w}_1^2, \dots, \bm{w}_{|u|}^2)$ differ in at most $2|u| + \alpha - 2$ positions.

If, instead, $(R_1^2, \dots, R_{|u|}^2)$ is the successor of $(R_1^1, \dots, R_{|u|}^1)$, then for each $1 \le i \le |u|$, $\bm{w}_i^2$ and $\bm{w}_i^1$ are again the first and last tuples in their respective orderings. Specifically, $\bm{w}_i^2 = (R_i^2, 0, \dots, 0)$, whereas $\bm{w}_i^1$ is either $(R_i^1-1, 1, 0, \dots, 0)$ or $(R_i^1, 0, \dots, 0)$. Thus, the sequences $(\bm{w}_1^1, \dots, \bm{w}_{|u|}^1)$ and $(\bm{w}_1^2, \dots, \bm{w}_{|u|}^2)$ differ in at most $2|u|\le 2|u|+\alpha-1$ positions.

For the second assertion, the argument proceeds similarly. If $(\bm{w}_1^1, \dots, \bm{w}_{|u|}^1) = (\bm{w}_1^2, \dots, \bm{w}_{|u|}^2)$, let $j$ be the first index such that $\bm{v}_j^1 \neq \bm{v}_j^2$. Since $\bm{v}_j^2$ is the successor of $\bm{v}_j^1$ in the Gray code ordering, they differ by exactly one bit. For $i > j$, $\bm{v}_i^2$ is the initial tuple in the ordering, which is the zero vector $\bm{0}$, while $\bm{v}_i^1$ is the terminal tuple: it is $\bm{0}$ if $z_i < \alpha$, and $(0, \dots, 0, 1)$ of length $w_{i, z_i} - 1$ if $z_i = \alpha$. In either case, $\bm{v}_i^1$ and $\bm{v}_i^2$ differ in at most one position.

If, instead, $(\bm{w}_1^2, \dots, \bm{w}_{|u|}^2)$ is the successor of $(\bm{w}_1^1, \dots, \bm{w}_{|u|}^1)$, then for each $1 \le i \le |u|$, $\bm{v}_i^1$ is the terminal tuple and $\bm{v}_i^2$ is the initial tuple of their respective orderings. Specifically, $\bm{v}_i^2$ is the zero vector $\bm{0}$, while $\bm{v}_i^1$ is either $\bm{0}$ or $(0, \dots, 0, 1)$. Although the lengths of $\bm{v}_i^1$ and $\bm{v}_i^2$ may differ (due to changes in $w_{i, z_i}$), they still differ in at most one bit under zero-padding. 
\end{proof}

\bibliographystyle{abbrv} 
\bibliography{ref}
\end{document}